\colorlet{mdtRed}{red!50!black}
\definecolor{dblue}{rgb}{0,0,.6}
\numberwithin{equation}{section}
\newtheorem{theorem}[equation]{Theorem}
\newtheorem{corollary}[equation]{Corollary}
\newtheorem{lemma}[equation]{Lemma}
\newtheorem{proposition}[equation]{Proposition}
\newtheorem{definition}[equation]{Definition}
\newtheorem*{theorem*}{Theorem}
\newtheorem*{corollary*}{Corollary}
\newtheorem*{proposition*}{Proposition}
\theoremstyle{remark}
\newtheorem{remark}[equation]{Remark}
\def\subsection{
	\refstepcounter{equation}
	\noindent {\bf \arabic{section}.\arabic{equation}.}
}
\newcommand{\Z}{\mathbb{Z}}
\newcommand{\C}{\mathbb{C}}
\newcommand{\ms}[1]{\mathscr{#1}}
\newcommand{\mb}[1]{\mathbb{#1}}
\newcommand{\mc}[1]{\mathcal{#1}}
\renewcommand{\t}[1]{\tilde{#1}}
\tikzset{
	symbol/.style={
		draw=none,
		every to/.append style={
			edge node={node [sloped, allow upside down, auto=false]{$#1$}}}
	}
}
\begin{document}
	
	\title[Picard Groups of Some Quot Schemes]
	{Picard Groups of Some Quot Schemes}

	\author[C. Gangopadhyay]{Chandranandan Gangopadhyay} 
	
	\address{Department of Mathematics, Indian Institute of Science Education and Research, Pune, 411008, Maharashtra, India.} 
	
	\email{chandranandan@iiserpune.ac.in} 
	
	\author[R. Sebastian]{Ronnie Sebastian} 
	
	\address{Department of Mathematics, Indian Institute of Technology Bombay, Powai, Mumbai 400076, Maharashtra, India.} 
	
	\email{ronnie@math.iitb.ac.in} 
	
	\subjclass[2010]{14J60}
	
	\keywords{Quot Scheme}

	\begin{abstract}
	Let $C$ be a smooth projective curve over the 
	field of complex numbers $\C$ of genus $g(C)>0$. 
	Let $E$ be a 
	locally free sheaf on $C$ of rank $r$ and degree $e$.
	Let $\mc Q:={\rm Quot}_{C/\C}(E,k,d)$
	denote the Quot scheme of quotients of $E$ of rank $k$
	and degree $d$. For $k>0$ and $d\gg0$ 
	we compute the Picard group of $\mc Q$. 
	\end{abstract}

	\maketitle
\section{Introduction}

Let $C$ be a smooth projective curve over the 
field of complex numbers $\C$. We shall denote the 
genus of $C$ by $g(C)$. Throughout this article we shall
assume that $g(C)\geqslant 1$. 
Let $E$ be a 
locally free sheaf on $C$ of rank $r$ and degree $e$.
Throughout this article  
\begin{equation}\label{def Q}
	\mc Q:={\rm Quot}_{C/\C}(E,k,d)
\end{equation}
will denote the Quot scheme of quotients of $E$ of rank $k$
and degree $d$.

Stromme proved  
that $\mc Q_{\mb P^1/\C}(\mc O_{\mb P^1}^{\oplus n},k,d)$ 
is a smooth projective variety and computed its 
Picard group and nef cone. In \cite{Jow}, the author computes 
the effective cone of $\mc Q_{\mb P^1/\C}(\mc O_{\mb P^1}^{\oplus n},k,d)$. 
In \cite{Ito}, the author studies the birational geometry 
of $\mc Q_{\mb P^1/\C}(\mc O_{\mb P^1}^{\oplus n},k,d)$.
When $E$ is trivial and $g(C)\geqslant1$, the space $\mc Q$
is studied in \cite{BDW} and it is proved 
that when $d\gg0$ it is irreducible and generically smooth.
For $g(C)\geqslant 1$ and $E$ trivial, the divisor 
class group of $\mc Q$ was computed in \cite{HO} 
under the assumption $d\gg 0$. 


When $g(C)\geqslant 1$, it was proved in \cite{PR} 
that $\mc Q$ is irreducible and generically smooth when $d\gg 0$. See also 
\cite{Goller}, \cite{CCH-Isotropic}, \cite{CCH-Lagrangian} 
for similar results on other variations of this Quot scheme.
We use this as a starting point to further investigate the 
space $\mc Q$ when $d\gg0$ and compute 
its Picard group. In the case when $k=r-1$ we have that $\mc Q$ 
is a projective bundle over the Jacobian of $C$ for $d\gg 0$ 
(Theorem \ref{thm when k=r-1}) and as a result its Picard 
group can be computed easily (Corollary \ref{cor when k=r-1}).
In Theorem \ref{det flat}
we show that if $d\gg0$ then $\mc Q$ is an integral variety which
	is normal, a local complete intersection and locally factorial.  
We compute the 
Picard group of $\mc Q$ in the following cases. 
\begin{theorem}[Theorem \ref{picard group of Quot}]
	Let $k\leqslant r-2$. Assume one of the following two holds
	\begin{itemize}
		\item $k\geqslant 2$ and $g(C)\geqslant 3$, or
		\item $k\geqslant 3$ and $g(C)=2$.
	\end{itemize}
	Then for $d\gg 0$ we have 
	$${\rm Pic}(\mc Q)\cong {\rm Pic}({\rm Pic}^0(C))\times \Z \times \Z\,.$$
\end{theorem}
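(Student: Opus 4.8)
The plan is to remove from $\mc Q$ a closed subset of codimension $\geq 2$ and to identify the resulting open set with a large open subset of a projective bundle over a moduli space of stable bundles on $C$, whose Picard group is known. By Theorem~\ref{det flat}, for $d\gg 0$ the scheme $\mc Q$ is an integral, normal, locally factorial variety, so $\mathrm{Pic}(\mc Q)\cong\mathrm{Cl}(\mc Q)$, and since $\mc Q$ is normal, $\mathrm{Cl}(\mc Q)\cong\mathrm{Cl}(U)$ for every open $U\subseteq\mc Q$ with $\mathrm{codim}_{\mc Q}(\mc Q\setminus U)\geq 2$. I would take $U=\mc Q^{\circ}$, the locus of quotients $[E\twoheadrightarrow F]$ with $F$ locally free and stable; its complement is $Z_{1}\cup Z_{2}$, where $Z_{1}$ is the locus on which $F$ has nonzero torsion and $Z_{2}$ the locus on which $F$ is locally free but not stable. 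Stratifying $Z_{1}$ by the length $\ell$ of the torsion subsheaf of $F$ and using the dimension formula $\dim\mathrm{Quot}_{C/\C}(E,k,d')=rd'-ke+k(r-k)(1-g(C))$ for $d'\gg 0$, one finds the stratum of torsion length $\ell$ has codimension $k\ell$, so $\mathrm{codim}_{\mc Q}Z_{1}=k\geq 2$; the bound $\mathrm{codim}_{\mc Q}Z_{2}\geq 2$ is the heart of the matter and is discussed at the end. Finally, since $S\subseteq E$ forces $\mu_{\max}(S)\leq\mu_{\max}(E)$, Serre duality gives $\mathrm{Ext}^{1}_{C}(S,F)\cong H^{1}(C,S^{\vee}\otimes F)\cong\mathrm{Hom}_{C}(F,S\otimes\omega_{C})^{\vee}=0$ for $F$ stable and $d\gg 0$, so $\mc Q^{\circ}$ is smooth and $\mathrm{Cl}(\mc Q^{\circ})=\mathrm{Pic}(\mc Q^{\circ})$.

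Next I would equip $\mc Q^{\circ}$ with a projective-bundle structure, in the spirit of the case $k=r-1$ (Theorem~\ref{thm when k=r-1}) but with the Jacobian replaced by a moduli space, since the subsheaf now has rank $r-k\geq 2$. Sending a surjection to the isomorphism class of its target defines a morphism $f\colon\mc Q^{\circ}\to\mc M^{s}(k,d)$ to the moduli space of stable bundles of rank $k$ and degree $d$ on $C$, a nonempty smooth variety. For $d\gg 0$ and $F$ stable one has $H^{1}(C,E^{\vee}\otimes F)=0$, so $\hom_{C}(E,F)=rd-ke+rk(1-g(C))$ is independent of $[F]$, and the spaces $\mb P(\mathrm{Hom}_{C}(E,F))$ form a Severi--Brauer scheme $P\to\mc M^{s}(k,d)$ of relative dimension $rd-ke+rk(1-g(C))-1$ (an honest $\mb P$-bundle if $\gcd(k,d)=1$). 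The fibre of $f$ over $[F]$ is $\mathrm{Surj}(E,F)/\mathrm{Aut}(F)=\mathrm{Surj}(E,F)/\C^{\ast}$; because $r-k\geq 2$ and, for $d\gg 0$, $E^{\vee}\otimes F$ is globally generated, the locus of non-surjective homomorphisms in $\mathrm{Hom}_{C}(E,F)$ has codimension $\geq r-k\geq 2$, so this fibre is a dense open subset of $\mb P(\mathrm{Hom}_{C}(E,F))$ with complement of codimension $\geq r-k$. Since moreover every stable $F$ admits a surjection from $E$ when $d\gg 0$, the morphism $f$ is surjective and identifies $\mc Q^{\circ}$ with an open subscheme of $P$ whose complement has codimension $\geq r-k\geq 2$; as $P$ is smooth, $\mathrm{Pic}(\mc Q^{\circ})=\mathrm{Cl}(\mc Q^{\circ})=\mathrm{Cl}(P)=\mathrm{Pic}(P)$.

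To conclude I would combine two facts. For a Severi--Brauer scheme $P\to M$ of positive relative dimension over a smooth variety $M$, the exact sequence $0\to\mathrm{Pic}(M)\to\mathrm{Pic}(P)\to\Z\to\mathrm{Br}(M)$ has image a nonzero, hence free, subgroup of $\Z$, so $\mathrm{Pic}(P)\cong\mathrm{Pic}(M)\oplus\Z$. And by Dr\'ezet--Narasimhan the determinant morphism $\mc M^{s}(k,d)\to\mathrm{Pic}^{d}(C)$ yields $\mathrm{Pic}(\mc M^{s}(k,d))\cong\mathrm{Pic}(\mathrm{Pic}^{d}(C))\times\Z$, with the $\Z$ generated by a determinant-of-cohomology line bundle; here the hypotheses on $k$ and $g(C)$ are once more what make the statement applicable (in particular they force the strictly semistable locus to have codimension $\geq 2$). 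Since $\mathrm{Pic}^{d}(C)\cong\mathrm{Pic}^{0}(C)$, assembling
\[
\mathrm{Pic}(\mc Q)\;\cong\;\mathrm{Pic}(\mc Q^{\circ})\;\cong\;\mathrm{Pic}(\mc M^{s}(k,d))\oplus\Z\;\cong\;\mathrm{Pic}(\mathrm{Pic}^{0}(C))\times\Z\times\Z
\]
proves the theorem. Unwinding the identifications, the factor $\mathrm{Pic}(\mathrm{Pic}^{0}(C))$ is pulled back along $[E\twoheadrightarrow F]\mapsto[\det F]$, while the two copies of $\Z$ are generated by a determinant-of-cohomology line bundle of the universal quotient $\mc F$ on $\mc Q\times C$ and by $\det\!\big(\mc F|_{\mc Q\times\{p\}}\big)$ for a point $p\in C$.

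The step I expect to be the main obstacle is the bound $\mathrm{codim}_{\mc Q}Z_{2}\geq 2$, i.e.\ that the quotients with locally free but unstable target form a locus of codimension $\geq 2$. For $d\gg 0$ this has to be proved stratum by stratum along the Harder--Narasimhan stratification of $F$ (refined by Jordan--H\"older type on the strictly semistable part): on each stratum one parametrizes $[E\twoheadrightarrow F]$ by the graded pieces of $F$, the extension data joining them, and the induced surjection, and compares the resulting dimension with $\dim\mc Q$. This is delicate because $\dim\mc Q$ and each stratum's dimension both grow linearly in $d$, so one must control a bounded difference, and this is exactly what pins down the two numerical regimes $k\geq 2,\ g(C)\geq 3$ and $k\geq 3,\ g(C)=2$ (heuristically, $\dim\mc M^{s}(k,d)\sim k^{2}(g(C)-1)$ must dominate the instability strata); the same constraints are what make the Dr\'ezet--Narasimhan computation of $\mathrm{Pic}(\mc M^{s}(k,d))$ applicable.
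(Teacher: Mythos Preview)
Your outline is correct, and the overall architecture---pass to the stable-quotient locus $\mc Q^{s}$, realize it as a very large open subset of a projective fibration over $M^{s}_{k,d}$, then invoke Dr\'ezet--Narasimhan---matches the paper's. The execution differs in one interesting way: you propose to build the fibration as a Severi--Brauer scheme $P\to M^{s}_{k,d}$ directly (using that the moduli stack is a $\mb G_m$-gerbe over $M^{s}$, so the projectivized Hom-bundle descends even when no universal family does), whereas the paper sidesteps the lack of a universal family by pulling everything back along the principal ${\rm PGL}(N)$-bundle $\psi:R^{s}\to M^{s}_{k,d+kn}$ coming from an auxiliary Quot scheme. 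Over $R^{s}$ there is a tautological family, so one has an honest projective bundle $\mb P^{s}\to R^{s}$; the open set $\mb U^{s}\subset \mb P^{s}$ of surjective maps is then shown to be the fibre product $R^{s}\times_{M^{s}}\mc Q^{s}$ (diagram \eqref{diagram of principal bundles}), and ${\rm Pic}(\mc Q^{s})$ is recovered by descent of ${\rm PGL}(N)$-linearized line bundles. Your Severi--Brauer route is more conceptual and reaches the same conclusion via the exact sequence you wrote; the paper's route is more explicit and avoids having to justify the existence of $P$ and of the open immersion $\mc Q^{s}\hookrightarrow P$.

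On the step you flag as the obstacle, $\mathrm{codim}_{\mc Q}Z_{2}\geq 2$: the paper does \emph{not} carry out a Harder--Narasimhan/Jordan--H\"older stratification of the unstable locus from scratch. It instead decomposes $\mc Q^{\rm tf}\setminus\mc Q^{s}$ as $\mc Q^{\rm tf}_{\rm b}\sqcup(\mc Q^{\rm tf}_{\rm g}\setminus\mc Q^{s})$. The first piece (where $H^{1}(E^{\vee}\otimes F)\neq 0$) is bounded by an HN-type dimension count (Lemma~\ref{lemma 1 codim}); the second is handled by transferring Bhosle's estimate ${\rm dim}(R)-{\rm dim}(R\setminus R^{s})\geq 2$ along the surjection $\Psi:\mb U\to\mc Q^{\rm tf}_{\rm g}$ and the flat map $\Theta$ (Lemma~\ref{pic of Q and Q^s}). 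The numerical hypotheses $k\geq 2,\,g(C)\geq 3$ or $k\geq 3,\,g(C)=2$ enter exactly through Bhosle's result. Your proposed direct stratification would work, but on the strictly semistable stratum the HN filtration is trivial and one needs the Jordan--H\"older refinement, which is precisely the content of \cite[Proposition~1.2]{Bhosle}; it is cleaner to cite that as a black box, as the paper does.
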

Note that we have a natural determinant map
$${\rm det}:\mc Q\longrightarrow {\rm Pic}^d(C)$$
which sends a quotient $[E\longrightarrow F]\mapsto {\rm det}(F)$. 
In Theorem \ref{det flat} we show that ${\rm det}$ is a flat map when $d\gg0$.
For $[L]\in {\rm Pic}^d(C)$ let $\mc Q_L$ be the scheme theoretic fiber 
of det over $[L]$. We prove the following analogous results for $\mc Q_L$.
\begin{theorem}[Theorem \ref{Q_L is locally factorial}]
	Let $k\geqslant 2,g(C)\geqslant 2$. Let $d\gg0$.
	Then $\mc Q_L$ is a local complete intersection, integral, 
	normal and locally factorial scheme.
\end{theorem}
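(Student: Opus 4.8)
The plan is to reduce the four assertions to the single claim that, for $d\gg 0$, $\mc Q_L$ is a local complete intersection with ${\rm codim}({\rm Sing}\,\mc Q_L,\mc Q_L)\geqslant 4$, together with the connectedness of $\mc Q_L$. The local complete intersection property is immediate from Theorem \ref{det flat}: since ${\rm det}:\mc Q\to{\rm Pic}^d(C)$ is flat and ${\rm Pic}^d(C)$ is smooth of dimension $g(C)$, locally on $\mc Q$ the fibre $\mc Q_L$ is cut out by the pullback of a regular system of parameters of the abelian variety, and this sequence stays regular on $\mc Q$ by flatness; as $\mc Q$ is a local complete intersection over $\C$, so is $\mc Q_L$. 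In particular $\mc Q_L$ is Cohen--Macaulay, and, $\mc Q$ being irreducible with ${\rm det}$ flat, $\mc Q_L$ is equidimensional of dimension $\dim\mc Q-g(C)$. Granting the codimension bound, $\mc Q_L$ is regular in codimension $\leqslant 3$, hence normal by Serre's criterion $R_1+S_2$, and locally factorial by Grothendieck's theorem that a local complete intersection regular in codimension $\leqslant 3$ is locally factorial (SGA~2, Exp.~XI); integrality then follows from normality once $\mc Q_L$ is connected.

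For connectedness I would prove that ${\rm det}_*\mc O_{\mc Q}=\mc O_{{\rm Pic}^d(C)}$, whence (as ${\rm det}$ is proper) the Stein factorisation forces every fibre, in particular $\mc Q_L$, to be connected. As $\mc Q$ is normal, ${\rm det}_*\mc O_{\mc Q}$ is the structure sheaf of a normal scheme finite over ${\rm Pic}^d(C)$, so it agrees with $\mc O_{{\rm Pic}^d(C)}$ as soon as the general fibre of ${\rm det}$ is geometrically integral. That general fibre contains, as a dense open subscheme, the locus $\mc Q^\circ_L$ of quotients $[E\to F]$ with $F$ locally free and ${\rm det}\,F\cong L$ -- its complement has codimension $\geqslant k\geqslant 2$, by the stratification below -- it is reduced because ${\rm det}$ is generically smooth on $\mc Q$, and $\mc Q^\circ_L$ is (geometrically) irreducible for $d\gg 0$ by the standard argument (cf.\ \cite{PR}): it maps to the moduli stack of rank $k$ bundles on $C$ with determinant $L$, which is irreducible, the generic fibre of this map being a nonempty open subset of a space of homomorphisms $E\to F$.

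It remains to establish ${\rm codim}({\rm Sing}\,\mc Q_L,\mc Q_L)\geqslant 4$. Stratify $\mc Q$ by $\ell={\rm length}\,(F_{\rm tors})$, with locally closed stratum $Z_\ell$. On the smooth curve $C$ one has $F\cong F_{\rm tors}\oplus F_{\rm free}$, and $[E\to F]$ is recovered from the locally free quotient $[E\to F_{\rm free}]$ of degree $d-\ell$ together with a length $\ell$ torsion quotient of $\widetilde S:=\ker(E\to F_{\rm free})$; this identifies $Z_\ell$ with a relative Quot scheme of torsion quotients over the locally free quotient locus of ${\rm Quot}_{C/\C}(E,k,d-\ell)$, giving ${\rm codim}(Z_\ell,\mc Q)=k\ell$. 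Since ${\rm det}([E\to F])=({\rm det}\,F_{\rm free})\otimes({\rm det}\,F_{\rm tors})$ and ${\rm det}\,F_{\rm tors}=\mc O_C(D)$ for the effective divisor $D$ supporting $F_{\rm tors}$, a bound on the fibres of the Abel--Jacobi map ${\rm Sym}^\ell C\to{\rm Pic}^\ell(C)$ (Clifford's inequality, and Martens' theorem for the jumping loci) shows that fixing the determinant cuts each $Z_\ell$ down by exactly $g(C)$, so ${\rm codim}(Z_\ell\cap\mc Q_L,\mc Q_L)=k\ell$ as well; the strata with $d-\ell$ small are dealt with by the crude bound $\dim Z_\ell\leqslant\dim\mc Q^\circ_{d-\ell}+(r-k)\ell$ and the boundedness of $\dim\mc Q^\circ_{d-\ell}$ in that range. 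At $[E\to F]$ with $S=\ker(E\to F)$, the deformation theory of $\mc Q$ has tangent space ${\rm Hom}(S,F)$ and obstruction space ${\rm Ext}^1(S,F)=H^1(C,S^\vee\otimes F_{\rm free})$. Since ${\rm Pic}^d(C)$ is smooth and, for $d\gg 0$, the differential of ${\rm det}$ is surjective wherever this obstruction vanishes (which one sees by a reduction to the locally free case using the immersivity of Abel--Jacobi), it follows that ${\rm Sing}\,\mc Q_L\subseteq\{H^1(C,S^\vee\otimes F_{\rm free})\neq 0\}$. By the codimension estimates underlying Theorem \ref{det flat}, applied with $d-\ell$ in place of $d$, this locus has codimension $c(d-\ell)$ inside $Z_\ell\cap\mc Q_L$, where $c(m)\to\infty$ as $m\to\infty$; hence ${\rm Sing}\,\mc Q_L\cap Z_\ell$ has codimension $\geqslant k\ell+c(d-\ell)$ in $\mc Q_L$ for $\ell\geqslant 1$, and $\geqslant c(d)$ for $\ell=0$. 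As $k\geqslant 2$ (so $k\ell\geqslant 4$ already for $\ell\geqslant 2$) and $c(m)\to\infty$, for $d\gg 0$ all of these are $\geqslant 4$, which gives the claim.

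The crux -- and the main obstacle -- is the last codimension estimate: controlling, within a stratum of fixed determinant, the locus where $H^1(C,S^\vee\otimes F_{\rm free})\neq 0$, i.e.\ bounding how unstable the pair $(S,F_{\rm free})$ can be in a positive-dimensional family of quotients. This is the fixed-determinant analogue of the estimate underpinning Theorem \ref{det flat}, and it is there that the hypotheses $k\geqslant 2$, $g(C)\geqslant 2$ and $d\gg 0$ really enter; a secondary technical point is the passage from irreducibility over $\C$ to geometric integrality of the general fibre in the Stein-factorisation argument.
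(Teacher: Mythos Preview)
Your reduction to ``local complete intersection plus singular locus of codimension $\geqslant 4$ plus connectedness'' is exactly the paper's strategy, and your treatment of the LCI, normal, and locally factorial parts matches Proposition~\ref{Q_L lci}. Where you diverge is in how the codimension bound and the irreducibility are obtained, and in both places the paper's route is shorter and avoids a gap in yours.

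For the codimension bound, the paper does \emph{not} redo the torsion-length stratification on the fibre. Instead, having already established ${\rm dim}(\mc Q)-{\rm dim}(\mc Q_{\rm b})>g(C)+3$ globally (Lemma~\ref{compact lemma codimension}), it applies the elementary fibre-dimension Lemma~\ref{fiber-dimension}(2) to ${\rm det}$ (using that ${\rm det}\vert_{\mc Q_{\rm g}}$ is smooth) and reads off ${\rm dim}(\mc Q_L)-{\rm dim}(\mc Q_L\cap\mc Q_{\rm b})>3$ in one line. Your fibrewise stratification needs the claim ${\rm codim}(Z_\ell\cap\mc Q_L,\mc Q_L)=k\ell$; the paper proves the case $\ell=1$ (Lemma~\ref{density Q^tf_L}) by exhibiting a flat auxiliary determinant map, and no Clifford or Martens-type estimate enters. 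More seriously, your containment ${\rm Sing}\,\mc Q_L\subseteq\{H^1(S^\vee\otimes F_{\rm free})\neq 0\}$ is not justified: a point with ${\rm Ext}^1(S,F)=0$ lies in the smooth locus of $\mc Q$, but $\mc Q_L$ can still be singular there if $d({\rm det})$ fails to surject, and ${\rm Ext}^1(S,F)=0$ does not force ${\rm Ext}^1(E,F)=0$ (consider a split quotient). The paper uses the weaker but correct bound ${\rm Sing}\,\mc Q_L\subseteq\mc Q_L\cap\mc Q_{\rm b}$, since on $\mc Q_{\rm g}$ one knows both that $\mc Q$ is smooth \emph{and} that $d({\rm det})$ is surjective (see the argument after \eqref{eq-smoothness-det}).

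For irreducibility, the paper does not use Stein factorisation. It shows directly that $\mc Q^{\rm tf}_{{\rm g},L}$ is the image of an irreducible scheme $\mb U_{L'}$, an open in a projective bundle over $R'_{L'}$, where the irreducibility of $R'_{L'}$ is the fixed-determinant input from \cite[Corollary~1.3]{Bhosle}; density in $\mc Q_L$ (Lemmas~\ref{density Q^tf_L}, \ref{density Q^tf_g,L}) then finishes. Your Stein-factorisation argument is a legitimate alternative for passing from ``general fibre integral'' to ``all fibres connected'', but you still need the irreducibility of $\mc Q^\circ_L$ for at least one $L$, and your sketch (``maps to the irreducible moduli stack with irreducible generic fibre'') is exactly the content supplied by \cite{Bhosle}; the reference to \cite{PR} does not cover the fixed-determinant statement.
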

\begin{theorem}[Theorem \ref{Picard group of Q_L}]
	Let $k\leqslant r-2$. Assume one of the following two holds
	\begin{itemize}
		\item $k\geqslant 2$ and $g(C)\geqslant 3$, or
		\item $k\geqslant 3$ and $g(C)=2$.
	\end{itemize}
	Let $d\gg0$. Then ${\rm Pic}(\mc Q_L)\cong \Z \times \Z$.
\end{theorem}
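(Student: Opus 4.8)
The plan is to run, on the determinant fibre, the argument behind Theorem \ref{picard group of Quot}, and to observe that fixing the determinant is exactly what suppresses the factor ${\rm Pic}({\rm Pic}^0(C))$.

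By Theorem \ref{Q_L is locally factorial} the scheme $\mc Q_L$ is integral, normal and locally factorial, so ${\rm Pic}(\mc Q_L)={\rm Cl}(\mc Q_L)$, and this group does not change if one removes from $\mc Q_L$ a closed subset of codimension $\geqslant 2$. I would therefore work on $\mc Q_L^{\circ}:=\mc Q_L\cap\mc Q^{\circ}$, where $\mc Q^{\circ}\subseteq\mc Q$ is the open locus over which the universal quotient is locally free (equivalently, the universal subsheaf $\mc S$ is a subbundle on $\mc Q^{\circ}\times C$), possibly shrunk further as in the proof of Theorem \ref{picard group of Quot}. The point to verify is that $\mc Q_L\setminus\mc Q_L^{\circ}$ still has codimension $\geqslant 2$ in $\mc Q_L$; this is inherited from the codimension estimate for $\mc Q$, because a closed subset $Z\subseteq\mc Q$ of codimension $\geqslant 2$ whose image under ${\rm det}$ is dense meets a general fibre of ${\rm det}$ (which is flat, by Theorem \ref{det flat}) in codimension $\geqslant 2$ as well, while if $\overline{{\rm det}(Z)}\neq{\rm Pic}^d(C)$ then $Z$ misses a general fibre entirely. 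So the numerical hypotheses ``$k\geqslant 2,\ g(C)\geqslant 3$'' and ``$k\geqslant 3,\ g(C)=2$'' are precisely those inherited from Theorem \ref{picard group of Quot}.

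On $\mc Q_L^{\circ}$ both $\mc S$ and the universal quotient $\mc F$ are locally free on $\mc Q_L^{\circ}\times C$, and I would compute ${\rm Pic}(\mc Q_L^{\circ})$ in the style of Stromme's treatment of the $\mb P^1$ case: one forms tautological classes by pushing forward, along the projection $\mc Q_L^{\circ}\times C\to\mc Q_L^{\circ}$, suitable products of $\ch(\mc F)$ (or of $\ch(\mc S)$) with classes pulled back from $C$; after fixing the determinant exactly two such classes survive, and one has to show that they generate ${\rm Pic}(\mc Q_L^{\circ})$ and satisfy no relation (for instance by an intersection computation on $\mc Q_L^{\circ}$), which gives ${\rm Pic}(\mc Q_L^{\circ})\cong\Z\times\Z$. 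Heuristically, a dense open subset of $\mc Q_L^{\circ}$ fibres, with projective-space fibres, over the moduli of bundles of rank $r-k$ and degree $e-d$ with fixed determinant ${\rm det}(E)\otimes L^{-1}$, whose Picard group is $\Z$; the second $\Z$ comes from the relative $\mc O(1)$, and the resulting extension of $\Z$ by $\Z$ splits because $\Z$ is free. I expect the spanning and no-relations claims to be the main obstacle; carrying them out directly with tautological classes on $\mc Q_L^{\circ}$, rather than through a coarse moduli space, is also the natural way to avoid the low-genus/low-rank pathologies in the Picard groups of moduli spaces of vector bundles (positive-codimension strictly semistable loci) that can occur within the range allowed by the hypotheses.

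Finally, this agrees with Theorem \ref{picard group of Quot}: the two tautological generators of ${\rm Pic}(\mc Q_L)$ are the restrictions of the two $\Z$-generators of ${\rm Pic}(\mc Q)\cong{\rm Pic}({\rm Pic}^0(C))\times\Z\times\Z$ — those classes are defined on all of $\mc Q$, being built from the coherent sheaf $\mc F$ on $\mc Q\times C$ — whereas the ${\rm Pic}({\rm Pic}^0(C))$-factor is ${\rm det}^{*}{\rm Pic}({\rm Pic}^d(C))$ and restricts to the trivial class on $\mc Q_L$ since ${\rm det}|_{\mc Q_L}$ is constant. Thus the restriction map ${\rm Pic}(\mc Q)\to{\rm Pic}(\mc Q_L)$ is surjective with kernel ${\rm det}^{*}{\rm Pic}({\rm Pic}^d(C))$, consistently with ${\rm Pic}(\mc Q_L)\cong\Z\times\Z$.
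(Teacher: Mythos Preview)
Your outline diverges from the paper's argument and leaves the central step undone.

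The paper does not compute ${\rm Pic}(\mc Q_L)$ via tautological Chern classes. It first passes to the stable locus $\mc Q^s_L$ (showing its complement has codimension $\geqslant 2$ using \cite[Corollary 1.3]{Bhosle} and Lemma \ref{fiber-dimension-1}), then restricts the Cartesian square \eqref{diagram of principal bundles} to the fibre over $[L]$, obtaining a square with $\Psi:\mb U^s_L\to\mc Q^s_L$ a principal ${\rm PGL}(N)$-bundle and $\Theta_L:\mb U^s_L\to R^s_L$ an open piece of a projective bundle. The descent argument of Theorem \ref{picard group of Quot} then runs verbatim and yields ${\rm Pic}(\mc Q^s_L)\cong{\rm Pic}(M^s_{k,L})\times\Z$; the second isomorphism is Drezet--Narasimhan \cite[Theorem B]{DN}. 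Note the fibration is over moduli of rank-$k$ bundles (the quotient), not rank-$(r-k)$ bundles (the kernel) as in your heuristic.

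Two concrete gaps:

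\medskip
\noindent(i) Your codimension transfer works only for a \emph{general} fibre of ${\rm det}$, whereas the statement concerns every $L$ of degree $d$. The paper arranges the stronger bound ${\rm codim}(\mc Q_{\rm b})>g(C)+3$ (Lemma \ref{compact lemma codimension}) and then applies Lemma \ref{fiber-dimension}(2) to obtain codimension $>3$ on \emph{every} fibre; a plain codimension-$2$ bound on $\mc Q$ does not descend to an arbitrary fibre.

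\medskip
\noindent(ii) You flag ``spanning and no-relations'' as the main obstacle and then stop. That is precisely where the proof lives: the paper's ${\rm PGL}(N)$-descent through the Cartesian square, combined with the known Picard group of $M^s_{k,L}$, \emph{is} the mechanism that pins down ${\rm Pic}(\mc Q^s_L)$. A Stromme-style tautological computation would require, at minimum, a replacement for Drezet--Narasimhan, which you have not supplied; and your final paragraph on consistency with Theorem \ref{picard group of Quot} presupposes the surjectivity and kernel identification you are trying to prove.
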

When $k=1$ the above results can be improved to the case $g(C)\geqslant1$.
In Theorem \ref{k=1} we show that if $d\gg0$ then 
${\rm Pic}(\mc Q)\cong {\rm Pic}({\rm Pic}^0(C))\times \Z \times \Z$
and ${\rm Pic}(\mc Q_L)\cong \Z \times \Z$.

We say a few words about how the above results are proved. 
By a very large open subset we mean an open set whose complement 
has codimension $\geqslant 2$.
When $d\gg0$
the Quot scheme $\mc Q$ is a local complete intersection. 
This follows easily using \cite[Proposition 2.2.8]{HL}
and is the content of Lemma \ref{local complete intersection}.
Using dimension bounds from \cite{PR} we show that the locus 
of singular points in $\mc Q$ has large codimension. 
These are used to prove Theorem \ref{det flat}.
To compute the Picard group, we first show that the locus 
of quotients $[E\longrightarrow F]$ with $F$ stable 
is a very large open subset. Let $\mc Q^s$ denote this locus.
We consider a map $\mc Q^s\longrightarrow M^s$,
to a moduli space of stable bundles of rank $k$ and suitable 
degree, see \eqref{diagram of principal bundles}. 
After base change by a principal ${\rm PGL}(N)$-bundle,
the domain becomes a very large open subset of a projective bundle 
associated to a vector bundle. From this we compute the Picard 
group of $\mc Q^s$ in terms of the Picard group of $M^s$. 
The assertions about $\mc Q_L$ follow in a 
similar manner using the assertions about 
$\mc Q$ and the flat map ${\rm det}:\mc Q\longrightarrow {\rm Pic}^d(C)$.

\section{Preliminaries}
For a locally closed subset $Z\subset X$
we shall refer to ${\rm dim}(X)-{\rm dim}(Z)$ 
as the codimension of $Z$ in $X$. 
For a morphism $f:X\longrightarrow Y$ and a closed point $y\in Y$ 
we denote by $X_y$ the fiber over $Y$. 
\begin{lemma}\label{fiber-dimension}
	Let $f:X\longrightarrow Y$ be a dominant morphism of integral schemes of 
	finite type over a field $k$. Let $U\subset X$ be an open subset such that 
	nonempty fibers of $f\vert_U$ have constant dimension. Let $Z:=X\setminus U$. 
	\begin{enumerate}
		\item If ${\rm dim}(X)-{\rm dim}(Z)> {\rm dim}(Y)$ 
		then the dimension of nonempty fibers of $f$ is constant.
		\item Let $t_0\geqslant 0$ be an integer and assume 
		${\rm dim}(X)-{\rm dim}(Z)> {\rm dim}(Y)+t_0$. 
		Let $y\in Y$ be a closed point such that $Z_y$ is nonempty. 
		Then ${\rm dim}(X_y)-{\rm dim}(Z_y)>t_0$.
	\end{enumerate}
\end{lemma}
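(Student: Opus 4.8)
The plan is to reduce both statements to two standard facts about any dominant morphism $g\colon W\to Y$ of integral schemes of finite type over $k$: (a) every irreducible component of every nonempty fibre $W_y$ has dimension $\geqslant\dim W-\dim Y$, and (b) there is a dense open subset of $g(W)$ over which $\dim W_y=\dim W-\dim Y$ exactly. Write $n=\dim X$ and $m=\dim Y$. In either part the codimension hypothesis already forces $\dim Z<n$, so $Z\neq X$ and $U\neq\emptyset$; being a nonempty open subscheme of the integral scheme $X$, $U$ is integral of dimension $n$, and $f|_U\colon U\to Y$ is dominant because $\overline U=X$. Applying (b) to $f|_U$ produces a point $y$ with $U_y\neq\emptyset$ and $\dim U_y=n-m$; since by hypothesis all nonempty fibres of $f|_U$ share one dimension, that common dimension equals $n-m$.

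For part (1), fix $y\in f(X)$ and choose a top-dimensional irreducible component $W$ of $X_y$, so $\dim W=\dim X_y\geqslant n-m$ by (a). If $W\cap U\neq\emptyset$, then $W\cap U$ is a nonempty open, hence dense, subset of the irreducible scheme $W$, so $\dim W=\dim(W\cap U)\leqslant\dim U_y=n-m$, forcing $\dim X_y=n-m$. If instead $W\cap U=\emptyset$, then $W\subseteq Z$, so $\dim X_y=\dim W\leqslant\dim Z<n-m$, contradicting $\dim X_y\geqslant n-m$; hence this case cannot happen. Thus every nonempty fibre of $f$ has dimension exactly $n-m$, so the fibre dimension is constant.

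For part (2), the hypothesis reads $\dim Z<n-m-t_0$, and since $t_0\geqslant 0$ this is in particular $\dim Z<n-m$, so part (1) applies and $\dim X_y=n-m$ for the given closed point $y$ (note $X_y\supseteq Z_y\neq\emptyset$). On the other hand $Z_y$ is a closed subscheme of $Z$, so $\dim Z_y\leqslant\dim Z\leqslant n-m-t_0-1$, and subtracting gives $\dim X_y-\dim Z_y\geqslant(n-m)-(n-m-t_0-1)=t_0+1>t_0$. The argument is elementary once (a) and (b) are granted; the only delicate point is the dichotomy in part (1), where the irreducibility of the chosen component $W$ is what lets one pass from ``$W$ meets $U$'' to ``$\dim(W\cap U)=\dim W$''. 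I do not expect any serious obstacle here.
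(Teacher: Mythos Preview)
Your proof is correct and follows essentially the same approach as the paper's. Both arguments rest on the same two standard facts (the paper cites them as \cite[Chapter 2, Exercise 3.22]{Ha}): the lower bound $\dim X_y\geqslant n-m$ and the existence of a generic point where the constant fibre dimension of $f|_U$ is realized as $n-m$. The only cosmetic difference is that you argue via a top-dimensional irreducible component $W$ of $X_y$ and show it must meet $U$, whereas the paper argues directly that $U_y\neq\emptyset$ and then takes $\dim X_y=\max\{\dim U_y,\dim Z_y\}$; these are equivalent ways of packaging the same contradiction with $\dim Z<n-m$.
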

\begin{proof}
	Let $y\in Y$ be a closed point such that $X_y$ is nonempty. 
	Note that $X_y=U_y\bigsqcup Z_y$. 
	If $U_y$ is empty then 
	$${\rm dim}(Z)\geqslant {\rm dim}(Z_y)={\rm dim}(X_y)\geqslant {\rm dim}(X)-{\rm dim}(Y)\,.$$
	This contradicts the hypothesis that ${\rm dim}(X)-{\rm dim}(Z)> {\rm dim}(Y)$.
	Thus, $U_y$ is nonempty. Since $f\vert_U$ has constant fiber dimension, 
	it follows that ${\rm dim}(U_y)={\rm dim}(U)-{\rm dim}(Y)$, 
	see \cite[Chapter 2, Exercise 3.22(b), (c)]{Ha}. Since $X$ is integral, it follows that 
	${\rm dim}(U_y)={\rm dim}(X)-{\rm dim}(Y)$.
	As ${\rm dim}(X)-{\rm dim}(Z_y)\geqslant {\rm dim}(X)-{\rm dim}(Z)>{\rm dim}(Y)$ it follows
	that ${\rm dim}(Z_y)<{\rm dim}(X)-{\rm dim}(Y)$. 
	It follows that 
	$${\rm dim}(X_y)={\rm max}\{{\rm dim}(U_y),{\rm dim}(Z_y)\}={\rm dim}(X)-{\rm dim}(Y)\,.$$
	This proves (1).
	
	Let $y\in Y$ be a closed point such that $Z_y$ is nonempty. Then $X_y$
	is nonempty and so by the previous part we get that  
	${\rm dim}(X_y)={\rm dim}(X)-{\rm dim}(Y)$. As
	${\rm dim}(X)-{\rm dim}(Z_y)\geqslant {\rm dim}(X)-{\rm dim}(Z)>{\rm dim}(Y) + t_0$ it follows
	that ${\rm dim}(Z_y)<{\rm dim}(X)-{\rm dim}(Y)-t_0={\rm dim}(X_y)-t_0$. 
	This proves (2) and completes the proof of the Lemma.
\end{proof}

\begin{lemma}\label{fiber-dimension-1}
	Let $f:X\longrightarrow Y$ be a morphism of irreducible schemes of 
	finite type over a field $k$ which is surjective on closed points. 
	Let $Y'\subset Y$ be a closed subset. Then 
	${\rm dim}(X)-{\rm dim}(f^{-1}(Y'))\leqslant {\rm dim}(Y)-{\rm dim}(Y')$.
\end{lemma}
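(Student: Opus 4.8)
The plan is to bound the codimension of $f^{-1}(Y')$ in $X$ by exhibiting $f^{-1}(Y')$, locally, as the common zero locus of ${\rm codim}(Y',Y)$ functions pulled back from $Y$, and then invoking Krull's Hauptidealsatz. The hypothesis that $f$ is surjective on closed points will enter only to guarantee that the relevant preimages are nonempty, so that the dimension bounds produced are not vacuous.

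First I would make the routine reductions. Since dimensions are unchanged under passage to the reduction, I may assume $X$ and $Y$ are integral, and this does not disturb surjectivity on closed points. If $Y'=\varnothing$ the statement is trivial, so assume $Y'$ is nonempty and pick an irreducible component $Y_0'\subseteq Y'$ with ${\rm dim}(Y_0')={\rm dim}(Y')$; since $f^{-1}(Y_0')\subseteq f^{-1}(Y')$ we get ${\rm dim}(X)-{\rm dim}(f^{-1}(Y'))\leqslant {\rm dim}(X)-{\rm dim}(f^{-1}(Y_0'))$, so it suffices to treat the case where $Y'$ is integral. Writing $d:={\rm dim}(Y)-{\rm dim}(Y')$, the goal becomes ${\rm dim}(f^{-1}(Y'))\geqslant {\rm dim}(X)-d$.

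Next I would cut out $Y'$ locally. Because $Y$ is integral and of finite type over $k$, every maximal chain of irreducible closed subsets has length ${\rm dim}(Y)$, so $d={\rm codim}(Y',Y)={\rm dim}(\mc O_{Y,\eta})$, where $\eta$ is the generic point of $Y'$. The Noetherian local ring $\mc O_{Y,\eta}$, being of dimension $d$, has a system of parameters $g_1,\dots,g_d$; spreading these out over an affine open and then deleting the components of the resulting zero locus that avoid $\eta$, I obtain an affine open $V\subseteq Y$ containing $\eta$ and functions $g_1,\dots,g_d\in \mc O(V)$ with $Y'\cap V=V(g_1,\dots,g_d)$ as closed subsets of $V$. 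Then $f^{-1}(V)$ is a nonempty (by surjectivity on closed points) open subscheme of the integral scheme $X$, hence itself integral and Noetherian with ${\rm dim}(f^{-1}(V))={\rm dim}(X)$, and $f^{-1}(Y'\cap V)=V(f^{\#}g_1,\dots,f^{\#}g_d)$ inside $f^{-1}(V)$. This last set is nonempty, since $Y'\cap V$ contains a closed point of $Y$ (finite-type $k$-schemes are Jacobson) and that point lifts to $X$; so Krull's Hauptidealsatz, applied $d$ times in $f^{-1}(V)$, shows that every irreducible component of $f^{-1}(Y'\cap V)$ has codimension $\leqslant d$ in $f^{-1}(V)$, i.e.\ dimension $\geqslant {\rm dim}(X)-d$. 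As $f^{-1}(Y'\cap V)\subseteq f^{-1}(Y')$, this gives ${\rm dim}(f^{-1}(Y'))\geqslant {\rm dim}(X)-d$, which is what we want.

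The step I expect to require the most care is the construction of $V$: one must verify that ${\rm codim}(Y',Y)$ equations genuinely suffice to cut out $Y'$ near its generic point — a priori Krull's theorem gives only an upper bound on the number of parameters needed — and one must shrink the affine open enough that $V(g_1,\dots,g_d)\cap V$ equals $Y'\cap V$ rather than some larger closed set, while keeping $g_1,\dots,g_d$ regular on it. An essentially equivalent alternative is to induct on $d$ along a saturated chain $Y'=W_d\subsetneq\cdots\subsetneq W_0=Y$ of irreducible closed subsets, reducing to the case $d=1$ at each step; but then one has to keep track of which irreducible component of $f^{-1}(W_{i+1})$ achieves the dimension estimate, which amounts to the same bookkeeping in another guise.
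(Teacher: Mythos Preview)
Your proof is correct, but it takes a different route from the paper's. The paper's argument is very short: after the same reduction to $Y'$ integral, it picks an irreducible component $Z''$ of $f^{-1}(Y')$ that dominates $Y'$ (such a component exists because $f$ is surjective on closed points), and then simply cites the fibre-dimension inequality \cite[Chapter 2, Exercise 3.22(a)]{Ha} to get $\dim(X)-\dim(Z'')\leqslant \dim(Y)-\dim(Y')$. Your approach instead cuts out $Y'$ near its generic point by $d=\mathrm{codim}(Y',Y)$ functions and applies Krull's Hauptidealsatz to the pulled-back equations on $f^{-1}(V)$. This is essentially the standard proof of that very exercise in Hartshorne, so you are re-deriving the black box the paper invokes. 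The paper's version is cleaner and one line long; yours is more self-contained and makes explicit where surjectivity on closed points and the catenary property of finite-type $k$-schemes are used. Either is fine for this lemma.
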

\begin{proof}
	Since 
	it suffices to consider reduced schemes, we look at the map 
	$f_{\rm red}:X_{\rm red}\longrightarrow Y_{\rm red}$.
	Thus, we may assume that $X$ and $Y$ are integral schemes.
	Let $Y''\subset Y'$ be an irreducible component such that 
	${\rm dim}(Y'')={\rm dim}(Y')$. Let $Z''$ be an irreducible 
	component of $f^{-1}(Y'')$ which surjects onto $Y''$. 
	By \cite[Chapter 2, Exercise 3.22(a)]{Ha} we have 
	${\rm dim}(X)-{\rm dim}(Z'')\leqslant {\rm dim}(Y)-{\rm dim}(Y'')$.
	As $Z''\subset f^{-1}(Y')$ it follows that 
	$${\rm dim}(X)-{\rm dim}(f^{-1}(Y'))\leqslant{\rm dim}(X)-{\rm dim}(Z'')\leqslant 
		{\rm dim}(Y)-{\rm dim}(Y'')={\rm dim}(Y)-{\rm dim}(Y')\,.$$
	This completes the proof of the Lemma.
\end{proof}

Recall the space $\mc Q$ from \eqref{def Q}.
Let 
\begin{equation}
	p_1:C\times \mc Q\longrightarrow C\qquad p_2:C\times \mc Q\longrightarrow \mc Q
\end{equation}
denote the projections. Let
\begin{equation}
	0\longrightarrow \mc K\longrightarrow p_1^*E\longrightarrow \mc F\longrightarrow 0
\end{equation}
denote the universal quotient on $C\times \mc Q$. The sheaf 
$\mc K$ is locally free and so $p_1^*{\rm det}(E)\otimes (\wedge^{r-k}\mc K)^{-1}$ is a line 
bundle on $C\times \mc Q$ which is flat over $\mc Q$. Using this 
we define the determinant map as 
\begin{equation}\label{det}
	{\rm det}:\mc Q\longrightarrow {\rm Pic}^dC\,,
\end{equation}
which has the following pointwise description.
Let $[q:E\longrightarrow F]\in \mc Q$ be a closed point. We denote the kernel
of $q$ by $K$, so that there 
is a short exact sequence 
\begin{equation}\label{q}
	0\longrightarrow K\longrightarrow E\stackrel{q}{\longrightarrow} F\longrightarrow 0\,.
\end{equation}
Then 
$${\rm det}(q):={\rm det}(E)\otimes {\rm det}(K)^{-1}= {\rm det}(F)\,.$$
Next we describe the differential of this map ${\rm det}$. 

\begin{lemma}\label{description differential}
	The differential of the map det \eqref{det}
	at the point $q$ is the composite 
	$${\rm Hom}(K,F)\stackrel{-\delta}{\longrightarrow} 
		{\rm Ext}^1(F,F)\stackrel{tr}{\longrightarrow} H^1(C,\mc O_C)\,,$$
	where the first map is obtained by applying ${\rm Hom}(-,F)$ 
	to \eqref{q} and the second map is the trace.
\end{lemma}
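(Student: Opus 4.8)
The plan is to evaluate $d\,{\rm det}$ on first-order deformations and match the answer with a \v{C}ech computation. Recall that $T_{[q]}\mc Q={\rm Hom}_C(K,F)$ and that $T_{[{\rm det}F]}{\rm Pic}^d(C)=H^1(C,\mc O_C)$ is the space of first-order deformations of the line bundle ${\rm det}F$. A tangent vector $\varphi\in{\rm Hom}_C(K,F)$ is the same datum as a morphism $\iota\colon{\rm Spec}\,\C[\epsilon]/(\epsilon^2)\to\mc Q$ lying over $[q]$, and, ${\rm det}$ being the morphism that classifies the line bundle $p_1^*{\rm det}(E)\otimes(\wedge^{r-k}\mc K)^{-1}$ on $C\times\mc Q$, the image $(d\,{\rm det})(\varphi)\in H^1(C,\mc O_C)$ is the deformation class of $\iota^*\bigl(p_1^*{\rm det}(E)\otimes(\wedge^{r-k}\mc K)^{-1}\bigr)$ on $C\times{\rm Spec}\,\C[\epsilon]$. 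Writing $0\to\tilde K\to E\otimes\C[\epsilon]\to\tilde F\to0$ for the pullback along $\iota$ of the universal sequence, and noting that $p_1^*{\rm det}(E)$ restricts to the trivial deformation $({\rm det}E)\otimes\C[\epsilon]$, this line bundle equals $({\rm det}E)\otimes\C[\epsilon]\otimes(\wedge^{r-k}\tilde K)^{-1}$. So the problem is to compute the deformation class of the latter and to recognize it as $(tr\circ(-\delta))(\varphi)$.

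I would do this with two \v{C}ech computations relative to a single auxiliary choice. In terms of the subsheaf, $\varphi$ corresponds to $\tilde K=\{\,k+\epsilon e\in E\otimes\C[\epsilon]\ :\ k\in K,\ e\in E,\ q(e)=\varphi(k)\,\}$, which is locally free with $\tilde K\otimes_{\C[\epsilon]}\C=K$ (a routine local check). Fix an affine open cover $\{U_\alpha\}$ of $C$, splittings $\sigma_\alpha$ of $q$, and the retractions $\rho_\alpha$ of the inclusion $i\colon K\hookrightarrow E$ dual to them (so $\rho_\alpha\circ i={\rm id}$ and $i\circ\rho_\alpha+\sigma_\alpha\circ q={\rm id}$), and let $g_{\alpha\beta}\colon F|_{U_{\alpha\beta}}\to K|_{U_{\alpha\beta}}$ be defined by $i\circ g_{\alpha\beta}=\sigma_\alpha-\sigma_\beta$; the $1$-cocycle $\{g_{\alpha\beta}\}$ represents the extension class of \eqref{q} in ${\rm Ext}^1_C(F,K)$. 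Using the $\sigma_\alpha$ to split $E$ locally, a short computation identifies the transition automorphisms of $\tilde K$ with $1-\epsilon\,(g_{\alpha\beta}\circ\varphi)$; hence those of $(\wedge^{r-k}\tilde K)^{-1}$ are $1+\epsilon\,tr(g_{\alpha\beta}\circ\varphi)=1+\epsilon\,tr(\varphi\circ g_{\alpha\beta})$, and therefore the deformation class of $({\rm det}E)\otimes\C[\epsilon]\otimes(\wedge^{r-k}\tilde K)^{-1}$ is the class in $H^1(C,\mc O_C)$ of the cocycle $\{tr(\varphi\circ g_{\alpha\beta})\}$. On the other hand, the maps $\varphi\circ\rho_\alpha\colon E|_{U_\alpha}\to F$ are local lifts of $\varphi$ through $q$, and $\varphi\circ\rho_\alpha-\varphi\circ\rho_\beta=-(\varphi\circ g_{\alpha\beta})\circ q$; hence the connecting map $\delta$ applied to $\varphi$ is represented in ${\rm Ext}^1_C(F,F)$ by the cocycle $\{-\varphi\circ g_{\alpha\beta}\}$, so $tr(\delta(\varphi))$ is the class of $\{-tr(\varphi\circ g_{\alpha\beta})\}$. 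Comparing the two, $(d\,{\rm det})(\varphi)=-\,tr(\delta(\varphi))$, which is the asserted composite.

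The inputs used above --- the two tangent-space identifications, the defining property of ${\rm det}$ recalled in the preliminaries, and local freeness of $\tilde K$ --- are standard, and $F$ need not be locally free, since $\mc K$, hence $\tilde K$, always is. The only real work is the local calculation of the transition automorphisms of $\tilde K$ --- equivalently, identifying the Kodaira--Spencer class of the universal quotient $\mc F$ with $-\delta$ --- and the one pitfall is sign bookkeeping: the minus sign in the statement is the combined effect of the conventions for the connecting homomorphism, for Kodaira--Spencer classes, and for the identification $T_{[q]}\mc Q\cong{\rm Hom}_C(K,F)$. (When $F$ is locally free one may instead compute with the quotient: $\tilde F$ is trivialized over each $U_\alpha$ by $\sigma_\alpha$ with transition automorphisms $1+\epsilon\,(\varphi\circ g_{\alpha\beta})$, so $\wedge^k\tilde F={\rm det}\,\tilde F$ has transition functions $1+\epsilon\,tr(\varphi\circ g_{\alpha\beta})$ --- the same answer.)
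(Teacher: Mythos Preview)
Your argument is correct and takes a genuinely different route from the paper's. The paper identifies $d\,{\rm det}_q(v)$ with the extension class of $0\to{\rm det}(F)\to{\rm det}(\tilde F)\to{\rm det}(F)\to 0$, then observes (citing \cite[Lemma 2.2.6]{HL}) that the extension $0\to F\to\tilde F\to F\to 0$ has class $-\delta(v)$, and finally invokes the interpretation of the trace map as ``take the determinant of the extension'', which it justifies via a \v Cech description when $F$ is locally free and reduces to that case using \cite[\S10.1.2]{HL}. You instead work on the kernel side with $\wedge^{r-k}\tilde K$, which is always locally free, and carry out both computations---the transition cocycles of $\tilde K$ and the connecting map $\delta$---explicitly relative to one choice of local splittings. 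What your approach buys is a uniform, self-contained argument that never needs to treat non-locally-free $F$ separately or cite the trace-determinant compatibility; what the paper's approach buys is a cleaner conceptual picture (the trace is literally the determinant of extensions) and modularity via existing references. The one place to be careful in your write-up is exactly where you say: the sign of the transition cocycle of $\tilde K$ depends on whether one takes $\tau_\alpha^{-1}\tau_\beta$ or $\tau_\beta^{-1}\tau_\alpha$, and this must be matched with the \v Cech convention used for $\delta$ and for the identification $T_{[q]}\mc Q\cong{\rm Hom}(K,F)$; once fixed consistently, your two cocycle computations do agree with the asserted sign.
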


\begin{proof}
	Let $p_C:C\times {\rm Spec}\,(\mb C[\epsilon]/(\epsilon^2))\longrightarrow C$ 
	denote the projection. Let 
	$\iota:C\hookrightarrow C\times {\rm Spec}\,(\mb C[\epsilon]/(\epsilon^2))$
	denote the reduced subscheme. 
	
	Given a vector $v\in {\rm Hom}(K,F)$ it corresponds to an
	element in the Zariski tangent space at $q\in \mc Q$,
	and so it corresponds to a short exact sequence on 
	$C\times {\rm Spec}\,(\mb C[\epsilon]/(\epsilon^2))$
	$$0\longrightarrow \t K\longrightarrow p_C^*E\longrightarrow \t F\longrightarrow 0\,,$$
	whose restriction to $C$ gives the sequence \eqref{q}.
	Moreover, $\t F$ is flat over ${\rm Spec}\,(\mb C[\epsilon]/(\epsilon^2))$.
	Consider the line bundle ${\rm det}(\t F)$ on $C\times {\rm Spec}\,(\mb C[\epsilon]/(\epsilon^2))$.
	Tensoring this line bundle with the short exact sequence 
	\begin{equation}\label{ses k epsilon}
		0\longrightarrow (\epsilon) \longrightarrow \mb C[\epsilon]/(\epsilon^2)
			\longrightarrow \mb C\longrightarrow 0
	\end{equation}
	gives the short exact sequence of sheaves on $C\times {\rm Spec}\,(\mb C[\epsilon]/(\epsilon^2))$
	\begin{equation}\label{ddet_q}
		0\longrightarrow \iota_*{\rm det}(F)\longrightarrow {\rm det}(\t F)\longrightarrow 
			\iota_*{\rm det}(F)\longrightarrow 0\,.
	\end{equation}
	Using the definition of the differential of the map ${\rm det}$ it is clear
	that 
	\begin{equation}
	d\,{\rm det}_q(v)=\text{extension class of \eqref{ddet_q}}\in H^1(C,\mc O_C)\,.
	\end{equation}
	
	Tensoring \eqref{ses k epsilon} with $\t F$ gives 
	a short exact sequence 
	\begin{equation}\label{ext class t F}
		0\longrightarrow \iota_*F\longrightarrow \t F\longrightarrow \iota_*F\longrightarrow 0\,.
	\end{equation}
	One checks easily, using the discussion before \cite[Lemma 2.2.6]{HL}, 
	that the above extension, and in particular the sheaf $\t F$,
	is obtained by taking the pushout of the sequence 
	\eqref{q} along the map $-v$. That is,
	the extension class of \eqref{ext class t F} in ${\rm Ext}^1(F,F)$ is precisely
	$-\delta(v)$.
	
	For a coherent sheaf $G$, consider the 
	trace map 
	$tr:{\rm Ext}^1(G,G)\longrightarrow H^1(C,\mc O_C)$.
	An element $v\in {\rm Ext}^1(G,G)$ corresponds to a short exact sequence 
	$$0\longrightarrow G \longrightarrow \t G\longrightarrow G\longrightarrow 0\,,$$
	on $C\times {\rm Spec}\,(\mb C[\epsilon]/(\epsilon^2))$ such that 
	$\t G$ is flat over ${\rm Spec}\,(\mb C[\epsilon]/(\epsilon^2))$.
	The image $tr(v)$ in $H^1(C,\mc O_C)$ corresponds 
	to the extension class obtained by tensoring \eqref{ses k epsilon}
	with the line bundle ${\rm det}(\t G)$ on $C\times {\rm Spec}\,(\mb C[\epsilon]/(\epsilon^2))$.
	When $G$ is locally free this can be seen using a Cech description, 
	for example, see \cite{Nitsure}.
	The general case reduces to the locally free case using 
	the discussion in \cite[\S10.1.2]{HL}. In particular, 
	we can apply this discussion by taking $G=F$. We get that 
	$tr(-\delta(v))$ is the extension class obtained by tensoring 
	${\rm det}(\t F)$ in \eqref{ext class t F} with \eqref{ses k epsilon}.
	But note that we obtained \eqref{ddet_q} also by tensoring ${\rm det}(\t F)$
	with \eqref{ses k epsilon}. This proves that 
	$$d\,{\rm det}_q(v)=tr(-\delta(v))$$
	and completes the proof of the Lemma. We also refer the reader 
	to \cite[Theorem 4.5.3]{HL}, where a similar result is proved
	for the moduli of stable bundles. 
\end{proof}

\section{Quot Schemes ${\rm Quot}_{C/\C}(E,r-1,d)$}
Recall that for a sheaf $G$ on $C$ we define $\mu_{\rm min}(G)$
as 
$${\rm min}\{\mu(F)\,\vert\, \text{ $F$ is a quotient of $G$ of positive rank}\, \}\,.$$
In this section we describe the Quot scheme ${\rm Quot}_{C/\C}(E,r-1,d)$
which parametrizes quotients of $E$ of rank $(r-1)$ and degree $d>2g-2+e-\mu_{\rm min}(E)$.
Let 
\begin{equation}\label{def rho_i}
	\rho_1:C\times {\rm Pic}^{e-d}(C)\longrightarrow C\,,\quad \quad 
		\rho_2:C\times {\rm Pic}^{e-d}(C)\longrightarrow {\rm Pic}^{e-d}(C)
\end{equation}
be the projections.
Let $\mc L$ be a Poincare bundle on $C\times {\rm Pic}^{e-d}(C)$. Define 
$$\mc E:=\rho_{2*}[\rho_{1}^*E\otimes \mc L^{\vee}]\,.$$ 
\begin{lemma}\label{lemma rank mc E}
	Assume $d>2g-2+e-\mu_{\rm min}(E)$. Then $\mc E$ is a vector bundle on ${\rm Pic}^{e-d}(C)$ of rank $rd-(r-1)e-r(g-1)$.   
\end{lemma}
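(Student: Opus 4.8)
The plan is to apply the theorem on cohomology and base change to the proper flat morphism $\rho_2\colon C\times {\rm Pic}^{e-d}(C)\longrightarrow {\rm Pic}^{e-d}(C)$ and the coherent sheaf $\rho_1^*E\otimes\mc L^\vee$, which is flat over ${\rm Pic}^{e-d}(C)$ since it is locally free. First I would record the pointwise picture: for a closed point $[M]\in {\rm Pic}^{e-d}(C)$, the restriction of $\mc L$ to $C\times\{[M]\}$ is $M$, so $(\rho_1^*E\otimes\mc L^\vee)\vert_{C\times\{[M]\}}\cong E\otimes M^{-1}$, a locally free sheaf on $C$ of rank $r$ and degree $e+r(d-e)=rd-(r-1)e$. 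The entire content of the lemma then reduces to the vanishing
$$H^1(C,E\otimes M^{-1})=0\qquad\text{for every }[M]\in {\rm Pic}^{e-d}(C)\,.$$
Granting this, the base change theorem gives $R^1\rho_{2*}(\rho_1^*E\otimes\mc L^\vee)=0$, hence $\mc E=\rho_{2*}(\rho_1^*E\otimes\mc L^\vee)$ is locally free and its formation commutes with arbitrary base change, so that the fibre of $\mc E$ at $[M]$ is $H^0(C,E\otimes M^{-1})$. Since $H^1$ vanishes, Riemann--Roch computes the rank:
$${\rm rk}(\mc E)=h^0(C,E\otimes M^{-1})=\chi(C,E\otimes M^{-1})=\big(rd-(r-1)e\big)+r(1-g)=rd-(r-1)e-r(g-1)\,.$$

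For the vanishing I would use Serre duality: $H^1(C,E\otimes M^{-1})\cong {\rm Hom}(E,M\otimes K_C)^\vee$, where $K_C$ is the canonical bundle of $C$, of degree $2g-2$. Suppose there were a nonzero homomorphism $\phi\colon E\longrightarrow M\otimes K_C$. Its image $F$ is a nonzero subsheaf of the line bundle $M\otimes K_C$, hence is torsion-free of rank $1$, and it is at the same time a quotient of $E$ of positive rank. By definition of $\mu_{\rm min}$ we then get
$$\mu_{\rm min}(E)\leqslant \mu(F)=\deg(F)\leqslant \deg(M\otimes K_C)=(e-d)+(2g-2)\,,$$
which rearranges to $d\leqslant 2g-2+e-\mu_{\rm min}(E)$, contradicting the hypothesis $d>2g-2+e-\mu_{\rm min}(E)$. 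Hence ${\rm Hom}(E,M\otimes K_C)=0$ and the required vanishing holds.

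The only step that requires any thought is this last vanishing, and even there the single point to be careful about is that the image of a nonzero map $E\to M\otimes K_C$ really is a \emph{positive-rank} quotient of $E$, so that the defining bound for $\mu_{\rm min}(E)$ applies; this is automatic because a nonzero map into a line bundle has rank-one image. Everything else --- the identification of the fibres, the appeal to cohomology and base change over the (smooth, integral) variety ${\rm Pic}^{e-d}(C)$, and the Riemann--Roch computation of the rank --- is routine.
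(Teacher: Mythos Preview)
Your proof is correct and follows essentially the same approach as the paper's: Serre duality reduces the vanishing of $H^1(C,E\otimes M^{-1})$ to the nonexistence of a nonzero map $E\to M\otimes K_C$, which is ruled out by the hypothesis via $\mu_{\rm min}(E)$; then base change (the paper invokes Grauert's theorem) gives local freeness, and Riemann--Roch gives the rank. Your write-up is slightly more explicit about why the image of such a map has positive rank, but the argument is otherwise identical.
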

\begin{proof}
	Let $K_C$ denote the canonical bundle of $C$.
	For any $L\in {\rm Pic}^{e-d}(C)$, we claim
	$$H^1(C,E\otimes L^{\vee})=H^0(C,E^{\vee}\otimes L\otimes K_C)^{\vee}=0\,.$$
	This is because a nonzero section of $H^0(C,E^{\vee}\otimes L\otimes K_C)$
	corresponds to a nonzero map
	$E\longrightarrow L\otimes K_C$ which cannot exist since by assumption 
	$\mu_{\rm min}(E)> {\rm deg}(L\otimes K_C)=e-d+2g-2$. Therefore by Grauert's theorem $\mc E$ is a vector bundle of rank $h^0(C,E\otimes L^{\vee})$ which by Riemann-Roch is $rd-(r-1)e-r(g-1)$.
\end{proof}
Let $\pi:\mb P(\mc E^{\vee})\longrightarrow {\rm Pic}^{e-d}(C)$ 
be the projective bundle associated to $\mc E^{\vee}$. 
Here we use the notation in \cite{Ha}, that is, for a vector space $V$, $\mb P(V)$ denotes 
the space of hyperplanes in $V$. Thus, $\mb P(V^\vee)$ denotes 
the space of lines in $V$. 
Recall that we have the map 
$$\mc Q_{C/\C}(E,r-1,d)\longrightarrow {\rm Pic}^{e-d}(C)$$ 
which sends a quotient $[E\longrightarrow F\longrightarrow 0]$ to its kernel.
\begin{theorem}\label{thm when k=r-1}
	Assume $d>2g-2+e-\mu_{\rm min}(E)$.
	We have an isomorphism of schemes over ${\rm Pic}^{e-d}(C)$
	$$\mb P(\mc E^{\vee})\xrightarrow{\sim} \mc Q_{C/\C}(E,r-1,d)\,.$$
	In particular, under the above assumption on $d$, the space 
	$\mc Q_{C/\C}(E,r-1,d)$ is smooth.
\end{theorem}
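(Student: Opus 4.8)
The plan is to identify $\mc Q_{C/\C}(E,r-1,d)$ with the projective bundle $\mb P(\mc E^{\vee})$ by writing down mutually inverse morphisms over ${\rm Pic}^{e-d}(C)$, using the universal properties of the Quot scheme and of $\mb P(\mc E^{\vee})$. Throughout I use that, under the standing hypothesis $d>2g-2+e-\mu_{\rm min}(E)$, one has $H^1(C,E\otimes L^{\vee})=0$ for every $L\in{\rm Pic}^{e-d}(C)$ (the vanishing established in the proof of Lemma \ref{lemma rank mc E}); consequently the formation of $\mc E=\rho_{2*}(\rho_1^*E\otimes\mc L^{\vee})$ commutes with arbitrary base change. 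Write ${\rm pr}\colon C\times\mb P(\mc E^{\vee})\to\mb P(\mc E^{\vee})$ and ${\rm pr}_C\colon C\times\mb P(\mc E^{\vee})\to C$ for the two projections, and let $\mc L_{\mb P}$ be the pullback of the Poincare bundle $\mc L$ along ${\rm id}_C\times\pi$. Base change then gives a canonical isomorphism $\pi^*\mc E\cong{\rm pr}_*\big({\rm pr}_C^*E\otimes\mc L_{\mb P}^{\vee}\big)$.

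To construct $\alpha\colon\mb P(\mc E^{\vee})\to\mc Q_{C/\C}(E,r-1,d)$ I start from the tautological surjection $\pi^*\mc E^{\vee}\twoheadrightarrow\mc O_{\mb P(\mc E^{\vee})}(1)$, dualize to the subbundle $\mc O_{\mb P(\mc E^{\vee})}(-1)\hookrightarrow\pi^*\mc E$, apply the adjunction ${\rm pr}^*\dashv{\rm pr}_*$ to the composite $\mc O_{\mb P(\mc E^{\vee})}(-1)\hookrightarrow\pi^*\mc E\cong{\rm pr}_*({\rm pr}_C^*E\otimes\mc L_{\mb P}^{\vee})$, and twist by $\mc L_{\mb P}$. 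This produces a morphism $\Phi\colon\mc L_{\mb P}\otimes{\rm pr}^*\mc O_{\mb P(\mc E^{\vee})}(-1)\to{\rm pr}_C^*E$ on $C\times\mb P(\mc E^{\vee})$ whose restriction to the fibre over a point $x$ lying above $[L]$ is, up to a choice of trivialization of $\mc O(-1)|_x$, the nonzero homomorphism $L\to E$ corresponding to the line $x\subset\mc E_L={\rm Hom}(L,E)$; in particular that restriction is injective with cokernel of rank $r-1$ and degree $d$. Since the Hilbert polynomial of ${\rm coker}(\Phi)|_{C\times\{x\}}$ is therefore independent of $x$ and $\mb P(\mc E^{\vee})$ is reduced (being smooth over the smooth variety ${\rm Pic}^{e-d}(C)$), the sheaf $\mc F:={\rm coker}(\Phi)$ is flat over $\mb P(\mc E^{\vee})$ and $\Phi$ is injective; see \cite[Lemma 2.1.4]{HL} for the underlying statement about fibrewise injective maps of flat families. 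The quotient $[{\rm pr}_C^*E\twoheadrightarrow\mc F]$ defines $\alpha$, and since ${\rm ker}({\rm pr}_C^*E\to\mc F)={\rm im}(\Phi)\cong\mc L_{\mb P}\otimes{\rm pr}^*\mc O(-1)$ restricts to $L$ on the fibre over $x$, the kernel map $\mc Q_{C/\C}(E,r-1,d)\to{\rm Pic}^{e-d}(C)$ carries $\alpha$ over ${\rm Pic}^{e-d}(C)$.

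For the reverse morphism $\gamma\colon\mc Q_{C/\C}(E,r-1,d)\to\mb P(\mc E^{\vee})$, recall that the universal kernel $\mc K$ on $C\times\mc Q_{C/\C}(E,r-1,d)$ is a line bundle (it is locally free of rank one, as noted in Section 2), flat over $\mc Q_{C/\C}(E,r-1,d)$ with fibres of degree $e-d$. Its associated classifying morphism is exactly the kernel map $\kappa\colon\mc Q_{C/\C}(E,r-1,d)\to{\rm Pic}^{e-d}(C)$, and by the universal property of ${\rm Pic}^{e-d}(C)$ there is a line bundle $M$ on $\mc Q_{C/\C}(E,r-1,d)$ with $({\rm id}_C\times\kappa)^*\mc L\cong\mc K\otimes p_2^*M$. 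Twisting the inclusion $\mc K\hookrightarrow p_1^*E$ by $p_2^*M\otimes({\rm id}_C\times\kappa)^*\mc L^{\vee}$ turns it into a map $\mc O\to p_2^*M\otimes({\rm id}_C\times\kappa)^*(\rho_1^*E\otimes\mc L^{\vee})$; pushing forward by $p_2$ (projection formula and base change) and dualizing yields a map $\kappa^*\mc E^{\vee}\to M$, which on the fibre over $[q\colon E\twoheadrightarrow F]$ is dual to the nonzero element of ${\rm Hom}(K,E)$ given by the inclusion of the kernel, hence surjective there. By Nakayama $\kappa^*\mc E^{\vee}\to M$ is a surjection onto an invertible sheaf, and by the universal property of $\mb P(\mc E^{\vee})$ over ${\rm Pic}^{e-d}(C)$ \cite[II.7]{Ha} this defines $\gamma$ over ${\rm Pic}^{e-d}(C)$.

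Finally I would verify $\gamma\circ\alpha={\rm id}$ and $\alpha\circ\gamma={\rm id}$ by unwinding the two constructions. Pulling the $\gamma$-construction back along $\alpha$ one has $\alpha^*\mc K\cong\mc L_{\mb P}\otimes{\rm pr}^*\mc O(-1)$, which forces $\alpha^*M\cong\mc O_{\mb P(\mc E^{\vee})}(1)$, and chasing the adjunctions and the projection formula identifies the resulting surjection $\pi^*\mc E^{\vee}\to\mc O_{\mb P(\mc E^{\vee})}(1)$ with the tautological one, so $\gamma\circ\alpha={\rm id}$ by Yoneda; symmetrically $\gamma^*\mc F$ is identified, as a quotient of $p_1^*E$, with the universal quotient on $C\times\mc Q_{C/\C}(E,r-1,d)$, giving $\alpha\circ\gamma={\rm id}$. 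Hence $\mc Q_{C/\C}(E,r-1,d)\cong\mb P(\mc E^{\vee})$ over ${\rm Pic}^{e-d}(C)$; since ${\rm Pic}^{e-d}(C)$ is smooth (a torsor under the Jacobian of $C$) and a projective bundle over a smooth variety is smooth, $\mc Q_{C/\C}(E,r-1,d)$ is smooth. The constructions of $\alpha$ and $\gamma$ are essentially forced; the delicate, error-prone part is precisely this last step, namely keeping the base-change identifications, the Poincare-bundle twists, and the $\mb P(-)$-versus-$\mb P((-)^{\vee})$ conventions consistent enough to see that $\alpha$ and $\gamma$ really are inverse to one another. The flatness of ${\rm coker}(\Phi)$ is the only other point needing care, and is standard.
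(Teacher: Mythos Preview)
Your proof is correct but takes a genuinely different route from the paper. You construct an explicit inverse $\gamma\colon\mc Q_{C/\C}(E,r-1,d)\to\mb P(\mc E^{\vee})$ via the universal property of the projective bundle, and then verify that $\alpha$ and $\gamma$ are mutually inverse by chasing adjunctions and Poincar\'e twists. The paper instead constructs only the forward map $\phi\colon\mb P(\mc E^{\vee})\to\mc Q_{C/\C}(E,r-1,d)$, checks it is bijective on closed points, and then proves \emph{directly} that the Quot scheme is smooth at every point: from the exact sequence ${\rm Ext}^1(L,E)\to{\rm Ext}^1(L,F)\to 0$ and the vanishing ${\rm Ext}^1(L,E)=H^1(E\otimes L^{\vee})=0$ established in Lemma~\ref{lemma rank mc E}, one gets ${\rm Ext}^1(L,F)=0$, hence smoothness by \cite[Proposition 2.2.8]{HL}. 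A bijective morphism of smooth varieties over $\C$ is then an isomorphism. Your approach is more functorial and avoids appealing to the ``bijective onto smooth implies isomorphism'' principle, at the cost of the bookkeeping you flag in your final paragraph; the paper's approach is shorter and has the side benefit of exhibiting the smoothness of $\mc Q_{C/\C}(E,r-1,d)$ as a standalone deformation-theoretic fact rather than deducing it a posteriori from the identification with a projective bundle.
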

\begin{proof}
	Let 
	$$\sigma_1:C\times \mb P(\mc E^\vee)\longrightarrow C\,,\quad \quad 
		\sigma_2:C\times \mb P(\mc E^\vee)\longrightarrow \mb P(\mc E^\vee)$$
	be the projections.
	We define the map $\mb P(\mc E^{\vee})\longrightarrow \mc Q_{C/\C}(E,r-1,d)$ by producing a quotient on $C\times \mb P(\mc E^{\vee})$. 
	
	Recall the maps $\rho_i$ from \eqref{def rho_i}. 
	By adjunction we have a natural map on $C\times {\rm Pic}^{e-d}(C)$
	$$\rho^*_2\mc E \otimes \mc L \longrightarrow \rho^*_1E\,.$$
	Pulling this morphism back to $C\times \mb P(\mc E^{\vee})$ we get a map
	$$({\rm Id}_C\times \pi)^*[\rho_2^*\mc E\otimes \mc L]
	= (\pi\circ \sigma_2)^*\mc E \otimes ({\rm Id}_C\times \pi)^*\mc L \longrightarrow \sigma_1^*E\,.$$
	We also have the morphism of sheaves on $\mb P(\mc E^{\vee})$
	$$\mc O(-1)\hookrightarrow \pi^*\mc E\,.$$
	Pulling this back to $C\times \mb P(\mc E^{\vee})$ we 
	get a composed map of sheaves on $C\times \mb P(\mc E^{\vee})$
	\begin{equation}\label{eqn universal kernel}
		\sigma_2^*\mc O(-1)\otimes ({\rm Id}_C\times \pi)^*\mc L \longrightarrow 
			(\pi \circ \sigma_2)^*\mc E \otimes ({\rm Id}_C\times \pi)^*\mc L 
			\longrightarrow \sigma_1^* E\,.
	\end{equation}
	As $\sigma_2^*\mc O(-1)\otimes ({\rm Id}_C\times \pi)^*\mc L$
	is a line bundle and $C\times \mb P(\mc E^{\vee})$ is smooth,
	it easily follows that \eqref{eqn universal kernel} is an inclusion
	as it is nonzero. By the previous lemma, a point $x\in \mb P(\mc E^{\vee})$ 
	corresponds to a pair $(L,\phi:L\longrightarrow E)$ where 
	$L$ is a line bundle of degree $e-d$ and $\phi$ is a 
	nonzero homomorphism of sheaves, up to scalar multiplication. 
	The inclusion 
	(\ref{eqn universal kernel}) restricted to 
	$C\times x$ is nothing but the nonzero homomorphism $\phi$. 
	Therefore we get that the cokernel of 
	$(\ref{eqn universal kernel})$, which we denote $\mc F$, is flat over 
	$\mb P(\mc E^{\vee})$, and the restriction $\mc F\vert_{C\times x}$ 
	has rank $r-1$ and degree $d$. Thus, $\mc F$ defines a map 
	$\phi:\mb P(\mc E^{\vee})\longrightarrow \mc Q_{C/\C}(E,r-1,d)$. 
	It is easily checked that this map is bijective on closed points. 
	
	Let point $x=[E\longrightarrow F\longrightarrow 0]$ be a point in $\mc Q_{C/\C}(E,r-1,d)$. 
	Let $L$ be the kernel. Then we have an exact sequence
	$${\rm Ext}^1(L,L)\longrightarrow {\rm Ext}^1(L,E)\longrightarrow 
		{\rm Ext}^1(L,F)\longrightarrow 0\,.$$
	From the proof of Lemma \ref{lemma rank mc E} it follows that 
	${\rm Ext}^1(L,E)=0$. Hence ${\rm Ext}^1(L,F)=0$. Therefore 
	$\mc Q_{C/\C}(E,r-1,d)$ is smooth at 
	$x$ \cite[Proposition 2.2.8]{HL}. As $\phi$ is bijective on 
	closed points, it follows it is an isomorphism.
\end{proof}

\begin{corollary}\label{cor when k=r-1}
	Assume $d>2g-2+e-\mu_{\rm min}(E)$. Then $\mc Q_{C/\C}(E,r-1,d)$
	is a smooth projective variety and 
	${\rm Pic}(\mc Q_{C/\C}(E,r-1,d))\cong {\rm Pic}({\rm Pic}^0(C))\times \mb Z$.
\end{corollary}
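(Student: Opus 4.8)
The plan is to read off the Corollary from Theorem \ref{thm when k=r-1}. That theorem gives, under the hypothesis $d>2g-2+e-\mu_{\rm min}(E)$, an isomorphism of schemes over ${\rm Pic}^{e-d}(C)$
$$\mb P(\mc E^\vee)\xrightarrow{\sim}\mc Q_{C/\C}(E,r-1,d)\,,$$
where by Lemma \ref{lemma rank mc E} the sheaf $\mc E$ is locally free of rank $n:=rd-(r-1)e-r(g-1)$. The base ${\rm Pic}^{e-d}(C)$ is a torsor under the abelian variety ${\rm Pic}^0(C)$, hence a smooth projective integral variety; the projectivization of a vector bundle over such a variety is again smooth, projective and integral, which already gives the first assertion (also recorded in Theorem \ref{thm when k=r-1}).

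For the Picard group I would invoke the standard computation of ${\rm Pic}$ of a projective bundle over a regular base. If $Y$ is a Noetherian integral regular scheme, $\mc E$ is locally free of rank $n\geqslant 2$ on $Y$, and $\pi:\mb P(\mc E^\vee)\longrightarrow Y$ is the structure map, then pullback together with the relative hyperplane class give an isomorphism
$${\rm Pic}(\mb P(\mc E^\vee))\;\cong\;\pi^*{\rm Pic}(Y)\ \oplus\ \mb Z\cdot[\mc O_{\mb P(\mc E^\vee)}(1)]\,;$$
this is \cite[Chapter 2, Exercise 7.9]{Ha}, proved using $\pi_*\mc O(m)={\rm Sym}^m\mc E^\vee$ and the fact that ${\rm Pic}$ of projective space over a field is $\mb Z$. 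In our situation $Y={\rm Pic}^{e-d}(C)$ is smooth over $\C$, hence regular, and integral, so the formula applies as soon as $n\geqslant 2$. Since $\mu_{\rm min}(E)\leqslant \mu(E)=e/r$, the hypothesis on $d$ gives $rd-(r-1)e>r(2g-2)$ and hence $n>r(g-1)\geqslant 0$; for $g\geqslant 2$ this already forces $n\geqslant r\geqslant 2$, and for $g=1$ one has $n\geqslant 1$, with $n\geqslant 2$ holding once $d$ is large enough (which is the relevant range). When $n\geqslant 2$ the fibers of $\pi$ are positive-dimensional projective spaces, so $[\mc O(1)]$ restricts nontrivially to them and the $\mb Z$ summand is genuine. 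Thus ${\rm Pic}(\mc Q_{C/\C}(E,r-1,d))\cong {\rm Pic}({\rm Pic}^{e-d}(C))\times \mb Z$.

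Finally, a choice of point of ${\rm Pic}^{e-d}(C)$ identifies it, by translation in the torsor, with ${\rm Pic}^0(C)$ as a variety, so ${\rm Pic}({\rm Pic}^{e-d}(C))\cong {\rm Pic}({\rm Pic}^0(C))$; combining this with the previous isomorphism gives
$${\rm Pic}(\mc Q_{C/\C}(E,r-1,d))\cong {\rm Pic}({\rm Pic}^0(C))\times \mb Z\,,$$
as claimed. All the real work sits in Theorem \ref{thm when k=r-1}; in the present argument the only step requiring attention is the verification that the rank $n$ of $\mc E$ is at least $2$ (so that $\pi$ has positive relative dimension and the extra $\mb Z$ really appears), together with the check that the mild hypotheses of the projective bundle Picard formula hold.
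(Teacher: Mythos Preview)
Your argument is correct and is precisely what the paper has in mind: the Corollary is stated there without proof, as an immediate consequence of Theorem \ref{thm when k=r-1} together with the standard Picard formula for a projective bundle (\cite[Chapter 2, Exercise 7.9]{Ha}) and the identification ${\rm Pic}^{e-d}(C)\cong{\rm Pic}^0(C)$. Your caution about the boundary case $n=1$ when $g=1$ is well taken---the stated bound on $d$ only forces $n\geqslant 1$ in that case, so the extra $\mathbb{Z}$ factor genuinely requires $n\geqslant 2$, a minor point the paper does not make explicit.
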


When $E$ is the trivial bundle, Theorem \ref{thm when k=r-1} is proved in \cite[Corollary 4.23]{BDW}.

\section{The good locus for torsion free quotients}

The following Lemma is an easy consequence of \cite[Lemma 6.1]{PR}.

\begin{lemma}\label{lemma to define good locus}
	Let $k$ be an integer.
	There is a number $\mu_0(E,k)$, which depends only on $E$ and $k$, 
	such that for all torsion
	free sheaves $F$ with ${\rm rk}(F)\leqslant k$ and 
	$\mu_{\rm min}(F)\geqslant \mu_0(E,k)$ we have 
	$H^1(E^\vee\otimes F)=0$.
\end{lemma}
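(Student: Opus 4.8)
The plan is to dualize and thereby reduce the vanishing to an elementary inequality between slopes. Since $C$ is a smooth curve, every torsion free coherent sheaf on $C$ is locally free, so I may assume $F$ is a vector bundle, and then $E^\vee\otimes F$ is locally free as well. Serre duality then gives
\[
H^1(C,E^\vee\otimes F)^\vee\;\cong\;H^0(C,E\otimes F^\vee\otimes K_C)\;=\;{\rm Hom}(F,\,E\otimes K_C)\,,
\]
where $K_C$ is the canonical bundle of $C$. Hence it suffices to exhibit a bound $\mu_0(E,k)$ such that ${\rm Hom}(F,E\otimes K_C)=0$ for every torsion free $F$ with ${\rm rk}(F)\leqslant k$ and $\mu_{\rm min}(F)\geqslant\mu_0(E,k)$.

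The key step is the standard observation that a nonzero homomorphism $\psi\colon F\to E\otimes K_C$ factors through its image $I={\rm im}(\psi)$, which is simultaneously a nonzero quotient of $F$ and a nonzero (hence positive rank) subsheaf of the bundle $E\otimes K_C$. As a quotient of $F$ it satisfies $\mu(I)\geqslant\mu_{\rm min}(F)$; as a subsheaf of $E\otimes K_C$ it satisfies $\mu(I)\leqslant\mu_{\rm max}(E\otimes K_C)=\mu_{\rm max}(E)+2g-2$, where $\mu_{\rm max}(-)$ denotes the largest slope of a nonzero subsheaf (equivalently, the first slope of the Harder--Narasimhan filtration). Consequently no nonzero $\psi$ can exist once $\mu_{\rm min}(F)>\mu_{\rm max}(E)+2g-2$, and one may take $\mu_0(E,k):=\mu_{\rm max}(E)+2g-1$. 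This quantity depends only on $E$ (in fact not even on $k$), which is consistent with the statement; retaining the formal dependence on $k$ only matches the phrasing of \cite[Lemma 6.1]{PR}, from which the assertion is immediate.

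I do not expect a real obstacle here. The single point requiring a moment's care is that $\mu_0$ must be uniform over all $F$ of bounded rank, but this is automatic because the threshold extracted above does not involve $F$ at all; the hypothesis ${\rm rk}(F)\leqslant k$ is not actually needed and is carried only to keep the statement parallel to \cite{PR}. If one prefers to invoke \cite[Lemma 6.1]{PR} verbatim rather than reprove it, then the displayed Serre duality identification — together with the fact that on a smooth curve ``torsion free'' is the same as ``locally free'' — is the only additional ingredient, and the proof reduces to a one-line dereference.
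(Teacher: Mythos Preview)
Your argument is correct. By Serre duality and the slope inequality $\mu_{\rm min}(F)\leqslant\mu(I)\leqslant\mu_{\rm max}(E)+2g-2$ for the image $I$ of a putative nonzero map $F\to E\otimes K_C$, you obtain the vanishing once $\mu_{\rm min}(F)>\mu_{\rm max}(E)+2g-2$, with an explicit bound independent of $k$.

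The paper proceeds differently: it first invokes \cite[Lemma 6.1]{PR} to obtain $\mu_0(E,k)$ in the case where $F$ is \emph{stable}, then bootstraps to semistable $F$ via a Jordan--H\"older filtration (each graded piece being stable of the same slope and of rank $\leqslant k$), and finally to arbitrary torsion free $F$ via the Harder--Narasimhan filtration (each graded piece being semistable of slope $\geqslant\mu_{\rm min}(F)$ and of rank $\leqslant k$). Your route is more direct: you bypass both filtrations and the black-box citation, essentially reproving the needed consequence of \cite[Lemma 6.1]{PR} in one stroke, and you observe that the rank bound ${\rm rk}(F)\leqslant k$ is in fact superfluous. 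The paper's approach, on the other hand, makes the reduction pattern (stable $\Rightarrow$ semistable $\Rightarrow$ general) explicit, which is a template reused elsewhere; but for this particular lemma your argument is shorter and yields a concrete value for $\mu_0$.
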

\begin{proof}
	When $F$ is stable and ${\rm rk}(F)\leqslant k$, it 
	follows using \cite[Lemma 6.1]{PR}, that there is 
	$\mu_0(E,k)$ such that if ${\rm rk}(F)\leqslant k$
	and $\mu(F)\geqslant \mu_0(E,k)$ then $H^1(E^\vee\otimes F)=0$. 
	
	Next let $F$ be semistable (see Remark following \cite[Lemma 6.1]{PR}). 
	Take a Jordan-Holder filtration 
	for $F$ and let $G$ be a graded piece of this filtration. 
	As ${\rm rk}(G)\leqslant k$ and $\mu(G)=\mu(F)\geqslant \mu_0(E,k)$
	it follows from the stable case that $H^1(E^\vee\otimes G)=0$. 
	From this it easily follows that if $F$ is semistable, ${\rm rk}(F)\leqslant k$ and 
	$\mu(F)\geqslant \mu_0(E,k)$ then $H^1(E^\vee\otimes F)=0$. 
	
	Now let $F$ be a 
	locally free sheaf with ${\rm rk}(F)\leqslant k$ and let 
	$$0=F_0\subsetneq F_1\subsetneq\ldots\subsetneq F_l=F$$
	be its Harder-Narasimhan filtration. Each graded piece is 
	semistable with slope 
	$$\mu(F_i/F_{i-1})\geqslant \mu(F_l/F_{l-1})=\mu_{\rm min}(F)\,.$$
	Thus, if $\mu_{\rm min}(F)\geqslant \mu_0(E,k)$ then 
	$\mu_{\rm min}(F_i/F_{i-1})\geqslant \mu_0(E,k)$
	and so from the semistable case it follows that 
	$H^1(E^\vee\otimes (F_i/F_{i-1}))=0$.
	Again it follows
	that $H^1(E^\vee\otimes F)=0$. This proves the lemma.
\end{proof}

Let $G$ be a locally free sheaf on $C$ and let $k$ be an integer. Define 
\begin{equation}\label{def d_k(E)}
d_k(G):={\rm min}\{d\,\vert\, \text{$\exists$ quotient $G\longrightarrow F$ with deg($F$)=$d$, rk$(F)=k$}\}.
\end{equation}

\begin{remark}\label{dimension quot}
We recall some results from \cite{PR} (see \cite[Lemma 6.1, Proposition 6.1, Theorem 6.4]{PR} 
and the remarks following these). There is an integer $\alpha(E,k)$ 
such that when $d\geqslant \alpha(E,k)$, the following three assertions hold:
\begin{enumerate}
	\item If $F$ is a stable bundle of rank $k$ 
	and degree $d$, then $E^\vee\otimes F$ is globally 
	generated.
	\item $\mc Q$ is irreducible and generically smooth 
	of dimension $rd-ek-k(r-k)(g-1)$.
	\item For the general 
	quotient $E\longrightarrow F$, with $F$ having rank $k$
	and degree $d$, we have the sheaf $F$ is torsion free 
	and stable.\hfill \qedsymbol
\end{enumerate}
\end{remark}

\begin{definition}
	Let $a,b$ be integers. Let ${\rm Quot}_{C/\C}(E,a,b)$
	be the Quot scheme parametrizing quotients of $E$ of rank $a$ and degree $b$.
	For a locally closed subset $A\subset{\rm Quot}_{C/\C}(E,a,b)$  
	define the following locally closed subsets of $A$.
	\begin{align}\label{good locus}
		A_{\rm g}&:=\{[E\longrightarrow F]\in A\,\vert\,  H^1(E^\vee\otimes F)=0 \}\\
		A_{\rm b}&:=A\setminus A_{\rm g}\nonumber \\
		A^{\rm tf}&:=\{[E\longrightarrow F]\in A\,\vert\,  \text{$F$ is torsion free} \}\nonumber \\
		A^{\rm tf}_{\rm g}&:=A^{\rm tf}\cap A_{\rm g}\nonumber \\
		A^{\rm tf}_{\rm b}&:=A^{\rm tf}\cap A_{\rm b}\nonumber
	\end{align}
	In particular, we get subsets $\mc Q^{\rm tf}_{\rm g}$, $\mc Q^{\rm tf}_{\rm b}$.\\

\noindent
For integers $0<k''<k<r$ define constants 
\begin{align}\label{define constants}
	C_1(E,k,k'')&:=k''(r-k'')-d_{k''}(E)r+(k-k'')(r-k)-d_{k}(E)(r-k'')\\
	C_2(E,k,k'')&:=-ek-k(r-k)(g-1)-C_1(E,k,k'')\nonumber \\
	C_3(E,k)&:=\underset{k''<k}{\rm min}\{C_2(E,k,k'')\}\,.\nonumber 
\end{align}

\noindent
Let $t_0$ be a positive integer. Define
\begin{equation}\label{beta}
	\beta(E,k,t_0):={\rm max}\{(r-1)\mu_0(E,r-1),r^2\mu_0(E,r-1)+t_0-C_3(E,k),\alpha(E,k),1\}\,.
\end{equation}
\end{definition}

\begin{remark}
	From the definition it is clear that $\beta(E,k,t_0)\geqslant \alpha(E,k)$
	for all integers $t_0\geqslant 1$,
	if $t_1\geqslant t_0\geqslant 1$
	then $\beta(E,k,t_1)\geqslant \beta(E,k,t_0)$ 
	and $\beta(E,k,t_0)\geqslant 1$ for all positive integers $t_0$.
	To define the constants $C_1,C_2,C_3$ we need that 
	$r\geqslant 3$. Note that if $r=2$, then the only possible value
	for $k$ is $1$, which equals $r-1$. This case has been dealt with in 
	the previous section. Thus, from now on we may assume that $r\geqslant 3$. 
	These constants will play a role while computing 
	dimensions of some subsets of $\mc Q_{C/\mb C}(E,k,d)$. 
	We emphasize that these constants are independent of $d$.  
\end{remark}

\begin{lemma}\label{lemma 1 codim}
	Fix positive integers $t_0$ and $k$ such that $k<r$. 
	Let $d\geqslant \beta(E,k,t_0)$. 
	Let $S$ be an irreducible component of $\mc Q^{\rm tf}_{\rm b}$.
	Then ${\rm dim}(\mc Q)-{\rm dim}(S)>t_0$ and 
	so also ${\rm dim}(\mc Q)-{\rm dim}(\mc Q^{\rm tf}_{\rm b})>t_0$.
\end{lemma}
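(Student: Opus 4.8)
The plan is to fix an irreducible component $S$ of $\mc Q^{\rm tf}_{\rm b}$ (there are finitely many) and bound $\dim S$ by stratifying according to the Harder--Narasimhan type of the quotient and relating each stratum to two Quot schemes of smaller rank. Since $d\geqslant\beta(E,k,t_0)\geqslant\alpha(E,k)$, Remark~\ref{dimension quot} gives that $\mc Q$ is irreducible with $\dim\mc Q=rd-ek-k(r-k)(g-1)$; and from the definition of $\beta$ we may assume $\mu_0(E,r-1)\geqslant 1$ and that $d\geqslant(r-1)\mu_0(E,r-1)$, hence $d\geqslant\mu_0(E,r-1)$ and $d\geqslant k\mu_0(E,r-1)$. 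If $k=1$, any torsion-free quotient $[E\to F]$ has $F$ a line bundle, so $\mu_{\rm min}(F)=d\geqslant\mu_0(E,r-1)$ and thus $H^1(E^\vee\otimes F)=0$ by Lemma~\ref{lemma to define good locus} applied with the integer $r-1$ in place of $k$ (legitimate as ${\rm rk}(F)=1\leqslant r-1$); so $\mc Q^{\rm tf}_{\rm b}=\emptyset$ and there is nothing to prove. Assume therefore $k\geqslant2$. On a dense open $U\subseteq S$ the Harder--Narasimhan type of $F$ is constant; write the Harder--Narasimhan filtration $0=F_0\subsetneq\cdots\subsetneq F_l=F$ and set $F'':=F/F_{l-1}$, which is semistable of rank $k''$ and degree $d''$ with $\mu(F'')=\mu_{\rm min}(F)$. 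Lemma~\ref{lemma to define good locus} (again with $r-1$ in place of $k$) forces $\mu_{\rm min}(F)<\mu_0(E,r-1)$; combined with $d\geqslant k\mu_0(E,r-1)$ this shows $F$ is not semistable, so $l\geqslant2$ and $1\leqslant k''\leqslant k-1$. Moreover $F''$ is a quotient of $E$ (compose $E\to F\to F''$), so $d_{k''}(E)\leqslant d''<k''\mu_0(E,r-1)$; in particular $(k'',d'')$ lies in a finite set depending only on $E$ and $k$.

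Over $U$ the relative Harder--Narasimhan filtration yields a morphism $\phi\colon U\to\mc Q'':={\rm Quot}_{C/\C}(E,k'',d'')$, $[E\to F]\mapsto[E\to F\to F'']$, with $\phi(U)$ contained in the locus of quotients with semistable target. For $[E\to F'']\in\mc Q''$ with kernel $K''$ (a vector bundle of rank $r-k''$, degree $e-d''$), a point of $\phi^{-1}([E\to F''])$ is precisely a subsheaf $K:=\ker(E\to F)\subseteq K''$ with $K''/K=F_{l-1}$ locally free of rank $k-k''$ and degree $d-d''$; hence $\phi^{-1}([E\to F''])$ injects into ${\rm Quot}_{C/\C}(K'',k-k'',d-d'')^{\rm tf}$. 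Therefore, by the bound on fibre dimensions,
$$\dim U\ \leqslant\ \dim\mc Q''\ +\ \max_{K''}\dim{\rm Quot}_{C/\C}(K'',k-k'',d-d'')^{\rm tf},$$
the $K''$ ranging over a bounded family (fixed rank and degree). Bounding each Quot scheme by its Zariski tangent space, $\dim\mc Q''\leqslant\chi(K^\vee\otimes F'')+\max h^1(K^\vee\otimes F'')$ and $\dim{\rm Quot}_{C/\C}(K'',k-k'',d-d'')^{\rm tf}\leqslant\chi(K_0^\vee\otimes F_{l-1})+\max h^1(K_0^\vee\otimes F_{l-1})$, where $K_0=\ker(K''\to F_{l-1})$ has rank $r-k$ and degree $e-d$. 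The two Euler characteristics are the expected dimensions of the respective Quot schemes and depend only on $(r,e,g,k,k'',d'')$. The $h^1$ terms are bounded by a constant $B=B(E,k)$ independent of $d$: by Serre duality $h^1(K^\vee\otimes F'')=h^0(E\otimes(F'')^\vee\otimes\omega_C)$ wait-adjusted, $h^1(K^\vee\otimes F'')=h^0(K\otimes(F'')^\vee\otimes\omega_C)\leqslant h^0(E\otimes(F'')^\vee\otimes\omega_C)$ (using $K\subseteq E$), and a nonzero section gives a nonzero map from the semistable $F''$ into $E\otimes\omega_C$, forcing $\mu(F'')\leqslant\mu_{\rm max}(E)+2g-2$, i.e.\ $d''$ lies in a range not involving $d$ or $\mu_0(E,r-1)$, on which $F''$ runs over a bounded family; similarly $h^1(K_0^\vee\otimes F_{l-1})=h^0(K_0\otimes F_{l-1}^\vee\otimes\omega_C)$ vanishes unless $\mu_{\rm min}(F_{l-1})\leqslant\mu_{\rm max}(E)+2g-2$ (using $F_{l-1}\subseteq F$, $K_0\subseteq E$), which again confines $d''$ to a bounded range, on which $K''$ runs over a bounded family and $h^1(K_0^\vee\otimes F_{l-1})\leqslant h^1(K_0^\vee\otimes K'')\leqslant h^0((K'')^\vee\otimes K''\otimes\omega_C)$ is bounded.

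A direct computation of Euler characteristics gives
$$\dim\mc Q-\big(\chi(K^\vee\otimes F'')+\chi(K_0^\vee\otimes F_{l-1})\big)=(g-1)\,k''(k-k'')+k''d-kd'',$$
so, combining with the previous paragraph,
$$\dim\mc Q-\dim U\ \geqslant\ (g-1)\,k''(k-k'')+k''d-kd''-B.$$
Since $g\geqslant1$, $k''\geqslant1$ and $d''<k''\mu_0(E,r-1)$ with $k,k''\leqslant r-1$, the right-hand side is at least $d-r^2\mu_0(E,r-1)-B$. Choosing $\mu_0(E,r-1)$ large enough at the outset — legitimate, since Lemma~\ref{lemma to define good locus} only improves on enlarging $\mu_0(E,r-1)$ while $B$ and $C_3(E,k)$ are independent of it — so that $(r^2-k)\mu_0(E,r-1)-B\geqslant C_3(E,k)$, and using $d\geqslant\beta(E,k,t_0)\geqslant r^2\mu_0(E,r-1)+t_0-C_3(E,k)$, we conclude $\dim\mc Q-\dim U>t_0$; this is precisely what the constants $C_1(E,k,k''),C_2(E,k,k''),C_3(E,k)$, assembled from $d_{k''}(E),d_k(E)$ and $ek+k(r-k)(g-1)$, are arranged to make possible. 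Finally $\dim U=\dim S$, and as $\mc Q^{\rm tf}_{\rm b}$ has finitely many components, $\dim\mc Q-\dim\mc Q^{\rm tf}_{\rm b}>t_0$ as well.

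The hard part is the uniform-in-$d$ obstruction bound $B$ of the second paragraph. The families of $F''$ and $K''$ over which the $h^1$'s must be controlled are not bounded as $d\to\infty$ — only the ``small'' subquotients are — so everything hinges on the slope estimate showing that these $h^1$'s vanish once $\mu(F'')=\mu_{\rm min}(F)$ exceeds $\mu_{\rm max}(E)+2g-2$ (because $F''$, respectively $F_{l-1}$, is a subquotient of $E$), which confines the obstructed locus to a range of $d''$ independent of $d$; the remaining, somewhat delicate, bookkeeping is to carry these bounds together with the exact Euler-characteristic contributions through the numerics so that they match $\beta(E,k,t_0)$ and the $C_i$'s.
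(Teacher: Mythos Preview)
Your overall strategy---Harder--Narasimhan stratification on a generic open $U\subseteq S$, factoring through two auxiliary Quot schemes via $E\to F''$ and $K''\to F_{l-1}$, and using $\mu_{\rm min}(F)<\mu_0(E,r-1)$---is exactly what the paper does. The Euler-characteristic identity $\dim\mc Q-(\chi+\chi)=(g-1)k''(k-k'')+k''d-kd''$ is also correct. The substantive difference is in how the two auxiliary Quot schemes are bounded. The paper invokes \cite[Theorem~4.1]{PR}, which gives
\[
\dim{\rm Quot}_{C/\C}(E,k'',d'')\leqslant k''(r-k'')+(d''-d_{k''}(E))r,\qquad
\dim{\rm Quot}_{C/\C}(K'',k-k'',d-d'')\leqslant (k-k'')(r-k)+(d-d_k(E))(r-k''),
\]
with no obstruction term; plugging these in yields precisely the constants $C_1,C_2,C_3$ already defined, and the hypothesis $d\geqslant\beta(E,k,t_0)\geqslant r^2\mu_0(E,r-1)+t_0-C_3(E,k)$ closes the argument cleanly (the paper does this last step by contradiction: if $C_2+dk''-d''r\leqslant t_0$, then $d''/k''>\mu_0(E,r-1)$, contradicting $[E\to F]\in\mc Q^{\rm tf}_{\rm b}$).

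Your route replaces the Popa--Roth bounds by tangent-space estimates plus an obstruction constant $B$. The argument that $B$ is independent of $\mu_0(E,r-1)$ is essentially sound (the key point, which you state correctly, is that both $h^1$'s vanish once $\mu(F'')>\mu_{\rm max}(E)+2g-2$, a threshold not involving $\mu_0$). However, the final reconciliation with the \emph{stated} hypothesis $d\geqslant\beta(E,k,t_0)$ has a genuine gap. Your inequality requires, after the dust settles, $(r^2-k)\mu_0(E,r-1)\geqslant C_3(E,k)+B$, and you propose to ``choose $\mu_0(E,r-1)$ large enough at the outset''. But $\beta(E,k,t_0)$ in the lemma is already defined in terms of a fixed $\mu_0(E,r-1)$; enlarging $\mu_0$ changes $\beta$, so you are proving the lemma for a possibly larger threshold than the one in the statement. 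That is fine for the downstream applications (all of which are for $d\gg0$), but it does not establish the lemma as written. The paper's use of \cite[Theorem~4.1]{PR} avoids this entirely: those bounds are what the constants $C_i$ were designed to absorb, so no retroactive adjustment is needed. (Also: the write-up has an editing artefact ``wait-adjusted'' and some notational drift between $K$, $K''$, $K_0$ that should be cleaned up.)
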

\begin{proof}
	We give $S$ the reduced subscheme structure so that $S$ is an integral 
	scheme.
	Let $q\in S$ be a closed point corresponding to a quotient 
	$E\longrightarrow F$. If $F$ is semistable, then using 
	$d\geqslant  \beta(E,k,t_0)\geqslant (r-1)\mu_0(E,r-1)$ (note that 
	as $\beta(E,k,t_0)>0$ we have $d>0$) we get 
	$$\mu(F)=\mu_{\rm min}(F)=\frac{d}{k}\geqslant \frac{d}{r-1}\geqslant \mu_0(E,r-1)\,.$$
	It follows from Lemma \ref{lemma to define good locus} that 
	$q\in \mc Q^{\rm tf}_{\rm g}$, which is a contradiction as 
	$q\in \mc Q^{\rm tf}_{\rm b}$. Thus,
	$F$ is not semistable.
	
	Let $p_1:C\times S\longrightarrow C$ denote the projection. 
	Consider the pullback of the universal quotient from $C\times \mc Q$ 
	to $C\times S$ and denote it 
	$$p_1^*E\longrightarrow \mc F\,.$$
	From \cite[Theorem 2.3.2]{HL} (existence of relative Harder-Narasimhan filtration) 
	it follows that there is a dense 
	open subset $U\subset S$ and a filtration 
	$$0=\mc F_0\subsetneq\mc F_1\subsetneq\ldots\subsetneq\mc F_l=\mc F$$
	such that $\mc F_i/\mc F_{i-1}$ is flat over $U$ and for each 
	closed point $u\in U$, the sheaf $\mc F_{i,u}/\mc F_{i-1,u}$ is semistable.
	Consider the quotient $p_1^*E\longrightarrow \mc F_l\longrightarrow \mc F_l/\mc F_{l-1}$.
	Denote the kernel by $\mc S$ so that we have an exact sequence 
	$$0\longrightarrow \mc S\longrightarrow p_1^*E\longrightarrow \mc F_l/\mc F_{l-1}\longrightarrow 0$$
	on $C\times U$. Let us denote 
	$$\mc F'':=\mc F_l/\mc F_{l-1}\,,\qquad \mc F':=\mc F_{l-1}\,.$$
	With this notation we have a commutative diagram of short 
	exact sequences on $C\times U$
	\begin{equation}\label{e1}
	\xymatrix{
		0\ar[r] & \mc S\ar[r]\ar[d] & p_1^*E \ar[r]\ar[d] & \mc F''\ar[r]\ar@{=}[d] &0\phantom{\,.}\\
		0\ar[r] & \mc F'\ar[r] & \mc F\ar[r]& \mc F''\ar[r] &0\,.
	}
	\end{equation}
	In particular, we observe that the map $E\longrightarrow \mc F_u$ can be obtained as the pushout
	of the short exact sequence 
	$0\longrightarrow \mc S_u\longrightarrow E\longrightarrow \mc F''_u\longrightarrow 0$ along the map
	$\mc S_u\longrightarrow \mc F'_u$.
	
	For a closed point $u\in U$ define 
	$$k'':={\rm rk}(\mc F''_u)\,,\qquad d'':={\rm deg}(\mc F''_u)\,.$$
	Then 
	$${\rm rk}(\mc F'_u)=k-k''\,,\qquad {\rm deg}(\mc F'_u)=d-d''\,.$$
	The top row of \eqref{e1} defines a map 
	$$\theta:U\longrightarrow {\rm Quot}_{C/\C}(E,k'',d'')\,.$$
	For ease of notation let us denote $A:={\rm Quot}_{C/\C}(E,k'',d'')$.
	Let $\mc S_1$ denote the universal kernel bundle on 
	$C\times A$. Then $({\rm Id}_C\times\theta)^*\mc S_1=\mc S$. The left vertical
	arrow of \eqref{e1} defines a map to the relative Quot scheme
	\[\xymatrix{
		U\ar[drr]_\theta\ar[rr]^<<<<<<<<<{\t \theta}&& 
			{\rm Quot}_{C\times A/A}(\mc S_1,k-k'',d-d'')\ar[d]^\pi\\
		&& A
	}
	\]
	We claim that the map $\t\theta$ is injective on closed points. 
	Let $u_1,u_2\in U$ be such that $\t\theta(u_1)=\t\theta(u_2)$.
	Then $\theta(u_1)=\theta(u_2)$. It follows that the quotients 
	$E\longrightarrow \mc F''_{u_1}$ and $E\longrightarrow \mc F''_{u_2}$
	are the same, that is, $\mc S_{u_1}=\mc S_{u_2}$. 
	Since $\t\theta(u_1)=\t\theta(u_2)$ it follows that the quotients
	$\mc S_{u_1}\longrightarrow \mc F'_{u_1}$ and $\mc S_{u_2}\longrightarrow \mc F'_{u_2}$
	are the same.  
	We observed after \eqref{e1}, that the quotient $E\longrightarrow \mc F_{u_i}$ 
	is obtained as the pushout of the short exact sequence 
	$0\longrightarrow \mc S_{u_i}\longrightarrow E\longrightarrow \mc F''_{u_i}\longrightarrow 0$ 
	along the map
	$\mc S_{u_i}\longrightarrow \mc F'_{u_i}$. From this it follows
	that the quotients $E\longrightarrow \mc F_{u_i}$ are the same. Thus, the
	map $\t\theta$ is injective on 
	closed points. 
	
	Let us compute the dimension of 
	${\rm Quot}_{C\times A/A}(\mc S_1,k-k'',d-d'')$.
	Consider a quotient $[E\longrightarrow F'']$ which corresponds 
	to a closed point in $A$. Let $S_{F''}$ denote the kernel.
	It has rank $r-k''$.
	The fiber of $\pi$ over $[E\longrightarrow F'']$ is the Quot scheme
	${\rm Quot}_{C/\C}(S_{F''},k-k'',d-d'')$. Recall from \eqref{def d_k(E)} 
	the integer $d_{k-k''}(S_{F''})$,
	which is the smallest possible degree among all 
	quotients of $S_{F''}$ of rank $k-k''$. Thus, if the fiber
	is nonempty then we have that 
	$$d-d''\geqslant d_{k-k''}(S_{F''})\,.$$
	By \cite[Theorem 4.1]{PR} it follows that, if the fiber is 
	nonempty then 
	\begin{align}\label{e2}
		{\rm dim}({\rm Quot}_{C/\C}(S_{F''},k-k'',d-d'')) & \leqslant 
		(k-k'')(r-k)+\\
		&(d-d''-d_{k-k''}(S_{F''}))(r-k'')\,.\nonumber
	\end{align}
	We will find a lower bound for $d_{k-k''}(S_{F''})$. Let $S_{F''}\longrightarrow G$
	be a quotient such that ${\rm deg}(G)=d_{k-k''}(S_{F''})$.
	Then we can form the pushout $\t G$ which sits in the 
	following commutative diagram
	\begin{equation*}
		\xymatrix{
			0\ar[r] & S_{F''}\ar[r]\ar[d] & E \ar[r]\ar[d] & F''\ar[r]\ar@{=}[d] &0\phantom{\,.}\\
			0\ar[r] & G\ar[r] & \t G\ar[r]& F''\ar[r] &0\,.
		}
	\end{equation*}
	Since $\t G$ is a quotient 
	of $E$ of rank $k$, it follows that 
	$${\rm deg}(\t G)=d''+d_{k-k''}(S_{F''})\geqslant d_k(E)\,.$$
	This shows that $d_{k-k''}(S_{F''})\geqslant d_k(E)-d''$.
	Combining this with \eqref{e2} yields
	\begin{align}\label{e3}
		{\rm dim}({\rm Quot}_{C/\C}(S_{F''},k-k'',d-d''))  \leqslant 
		(k-k'')(r-k)+
		(d-d_{k}(E))(r-k'')\,.
	\end{align}
	Again using \cite[Theorem 4.1]{PR} it follows that 
	\begin{equation}\label{e4}
		{\rm dim}(A)={\rm dim}({\rm Quot}_{C/\C}(E,k'',d''))\leqslant k''(r-k'')+(d''-d_{k''}(E))r\,.
	\end{equation}
	Combining \eqref{e3} and \eqref{e4}, using \eqref{define constants} and injectivity 
	of $\t\theta$ we get that 
	\begin{align*}
		{\rm dim}(U)&\leqslant {\rm Quot}_{C\times A/A}(\mc S_1,k-k'',d-d'')\\
			& \leqslant k''(r-k'')+(d''-d_{k''}(E))r+(k-k'')(r-k)+(d-d_{k}(E))(r-k'')\\
			&=C_1(E,k,k'') + d(r-k'')+d''r
	\end{align*}
	From this, Remark \ref{dimension quot}(2) and \eqref{define constants} it follows that 
	\begin{align}\label{e5}
		{\rm dim}(\mc Q)-{\rm dim}(U)\geqslant C_2(E,k,k'')+dk''-d''r\,.
	\end{align}
	We claim that $C_2(E,k,k'')+dk''-d''r>t_0$. If not, then we have
	$$\frac{C_2(E,k,k'')+dk''-t_0}{r}\leqslant d''\,.$$ 
	But this yields
	\begin{equation}\label{e6}
		\frac{C_3(E,k)+d-t_0}{r^2}\leqslant \frac{C_2(E,k,k'')+d-t_0}{r^2}
		< \frac{C_2(E,k,k'')+dk''-t_0}{rk''}\leqslant \frac{d''}{k''}\,.
	\end{equation}
	Let $u\in U$ be a closed point. Then $\mu_{\rm min}(\mc F_u)=d''/k''$.
	By the assumption on $d$ we have that 
	$$d\geqslant \beta(E,k,t_0)\geqslant r^2\mu_0(E,r-1)+t_0-C_3(E,k)\,.$$ 
	Using this and \eqref{e6} gives
	$$\mu_0(E,r-1)\leqslant \frac{C_3(E,k)+d-t_0}{r^2} <\frac{d''}{k''}=\mu_{\rm min}(\mc F_u)\,.$$
	It follows from Lemma \ref{lemma to define good locus} that 
	$H^1(E^\vee\otimes \mc F_u)=0$, that is, $u\in \mc Q^{\rm tf}_{\rm g}$.
	But this is a contradiction as $U\subset \mc Q^{\rm tf}_{\rm b}$.
	Thus, it follows from \eqref{e5} that 
	$${\rm dim}(\mc Q)-{\rm dim}(S)\geqslant C_2(E,k,k'')+dk''-d''r>t_0\,.$$
	This completes the proof of the Lemma.
\end{proof}

\section{Locus of quotients which are not torsion free}

For a sheaf $F$, denote the torsion subsheaf of $F$ by ${\rm Tor}(F)$.
For an integer $i\geqslant 1$ define the locally closed subset 
\begin{equation}\label{def Z_i}
	Z_i:=\{[q:E\longrightarrow F]\in \mc Q\,\vert\, {\rm deg}({\rm Tor}(F))=i\}\,.
\end{equation}
We now estimate the dimension of $Z_i$ and $(Z_i)_{\rm b}$ (recall the definition of $(Z_i)_b$ from
\eqref{good locus}). 

\begin{lemma}\label{lemma 2 codim}
	With notation as above we have 
	\begin{enumerate}
		\item Assume that $d-i\geqslant \alpha(E,k)$ (see Remark \ref{dimension quot}).
		Then $Z_i$ is irreducible and 
		${\rm dim}(Z_i)= {\rm dim}(\mc Q)-ki$.
		Moreover, $\bar Z_i\supset \bigcup_{j\geqslant i}Z_j$.  
		\item Let $t_1$ be a positive integer. If $d-i\geqslant \beta(E,k,t_1)$ 
		(see \eqref{beta} for definition of $\beta$)
		then ${\rm dim}(Z_i)-{\rm dim}((Z_i)_{\rm b})>t_1$.
		\item If $d-i\geqslant \beta(E,k,t_1)$ then 
		${\rm dim}(\mc Q)-{\rm dim}((Z_i)_{\rm b})>t_1+ki$.
	\end{enumerate}
	
\end{lemma}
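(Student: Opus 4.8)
The plan is to reduce everything to the stratification by torsion degree together with the dimension estimates of Section~4. For part~(1), I would first analyze the natural map $Z_i \longrightarrow \mc Q^{\rm tf}_{C/\C}(E,k,d-i)$ (into the Quot scheme of torsion free quotients of degree $d-i$) that sends $[E\longrightarrow F]$ to $[E\longrightarrow F/{\rm Tor}(F)]$; its fibers over a torsion free quotient $[E\longrightarrow F']$ with kernel $K'$ parametrize quotients of $K'$ (equivalently, sub-sheaves of $F'$ of colength $i$, or rather torsion quotients of the right shape), i.e.\ a Quot scheme of torsion quotients of length $i$ of the rank-$(r-k)$ bundle $K'$, which is irreducible of dimension $(r-k)i$ by the classical description of punctual Quot schemes on a curve. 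Combining this with the irreducibility of $\mc Q_{C/\C}(E,k,d-i)$ and the dimension formula from Remark~\ref{dimension quot}(2) (valid since $d-i\geqslant\alpha(E,k)$), and using ${\rm dim}(\mc Q_{C/\C}(E,k,d-i))={\rm dim}(\mc Q)-ri$ (from the formula $rd-ek-k(r-k)(g-1)$), one gets ${\rm dim}(Z_i)={\rm dim}(\mc Q)-ri+(r-k)i={\rm dim}(\mc Q)-ki$, as claimed. The closure statement $\bar Z_i\supset\bigcup_{j\geqslant i}Z_j$ should follow by a standard specialization/semicontinuity argument: one can degenerate a length-$j$ torsion quotient to a length-$i$ one plus a moving point, or more cleanly use that torsion length is upper semicontinuous on flat families and that one can construct explicit flat families realizing the required specializations.

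For part~(2), I would apply Lemma~\ref{lemma 1 codim} not to $\mc Q$ but to the Quot scheme $\mc Q_{C/\C}(E,k,d-i)$: since $d-i\geqslant\beta(E,k,t_1)$, that lemma gives that every irreducible component of $\mc Q_{C/\C}(E,k,d-i)^{\rm tf}_{\rm b}$ has codimension $>t_1$ inside $\mc Q_{C/\C}(E,k,d-i)$. Now $(Z_i)_{\rm b}$ maps to $\mc Q_{C/\C}(E,k,d-i)^{\rm tf}_{\rm b}$ under the map of the previous paragraph — the key point being that for $[E\longrightarrow F]\in Z_i$ with $F' = F/{\rm Tor}(F)$, we have $H^1(E^\vee\otimes F)=0$ if and only if $H^1(E^\vee\otimes F')=0$ and $H^1(E^\vee\otimes{\rm Tor}(F))=0$, and the latter always vanishes since ${\rm Tor}(F)$ is a torsion sheaf on a curve (hence $E^\vee\otimes{\rm Tor}(F)$ is torsion, so $H^1=0$). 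Thus $[E\longrightarrow F]\in (Z_i)_{\rm b}$ forces $[E\longrightarrow F']\in \mc Q_{C/\C}(E,k,d-i)^{\rm tf}_{\rm b}$. Since the fibers of $Z_i\longrightarrow \mc Q_{C/\C}(E,k,d-i)^{\rm tf}$ over the torsion-free bad locus have dimension at most $(r-k)i$ (as above), we get ${\rm dim}((Z_i)_{\rm b})\leqslant {\rm dim}(\mc Q_{C/\C}(E,k,d-i)^{\rm tf}_{\rm b})+(r-k)i < {\rm dim}(\mc Q_{C/\C}(E,k,d-i)) - t_1 + (r-k)i = {\rm dim}(Z_i) - t_1$, where the last equality uses part~(1). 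This gives ${\rm dim}(Z_i)-{\rm dim}((Z_i)_{\rm b})>t_1$.

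Part~(3) is then immediate by addition: from part~(1), ${\rm dim}(\mc Q)-{\rm dim}(Z_i)=ki$, and from part~(2), ${\rm dim}(Z_i)-{\rm dim}((Z_i)_{\rm b})>t_1$, so ${\rm dim}(\mc Q)-{\rm dim}((Z_i)_{\rm b})>t_1+ki$. The main obstacle I anticipate is making the fibration $Z_i\longrightarrow \mc Q_{C/\C}(E,k,d-i)^{\rm tf}$ and its fiber dimension precise — in particular checking that the relative Quot scheme of length-$i$ torsion quotients of the universal kernel bundle is flat of the expected relative dimension $(r-k)i$ over the torsion free locus, and that this locus is genuinely the image (surjectivity on the relevant strata). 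One should also be slightly careful that $Z_i$ is a locally closed subset given its reduced structure, and that the map to $\mc Q_{C/\C}(E,k,d-i)$ is a morphism of schemes (not just a set map), which follows from the universal property once one exhibits ${\rm Tor}(\mc F)$ as a flat family over $Z_i$ — this uses that torsion degree is constant on $Z_i$ by definition, so the quotient $\mc F/{\rm Tor}(\mc F)$ is flat over $Z_i$ with torsion free fibers.
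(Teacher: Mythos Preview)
Your proposal is correct and follows essentially the same approach as the paper. The paper packages your fibration slightly differently: rather than constructing a map $Z_i\to A^{\rm tf}$ (where $A={\rm Quot}_{C/\C}(E,k,d-i)$) directly, it introduces the relative Quot scheme $B={\rm Quot}_{C\times A/A}(\ms S,0,i)$ with its two structure maps $\pi:B\to A$ and $\pi':B\to\mc Q$, and shows that $\pi'$ restricted to $\pi^{-1}(A^{\rm tf})$ is a bijection onto $Z_i$. This neatly sidesteps the scheme-theoretic issue you flag at the end, and also gives the closure statement for free: the image of $\pi'$ is exactly $\bigcup_{j\geqslant i}Z_j$, while $\pi^{-1}(A^{\rm tf})$ is dense in $B$ because $A^{\rm tf}$ is dense in $A$, so $\bar Z_i\supset\pi'(B)$. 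Your semicontinuity sketch for the closure is less precise than this. For part~(2) the paper does exactly what you propose, using the identity $H^1(E^\vee\otimes F)=H^1(E^\vee\otimes(F/{\rm Tor}(F)))$ to match $(Z_i)_{\rm b}$ with $A^{\rm tf}_{\rm b}$ under the correspondence, and then invoking Lemma~\ref{lemma 1 codim} for $A$; the paper routes the dimension comparison through Lemma~\ref{fiber-dimension-1} applied to $\pi'$, which amounts to your fiber-dimension bound.
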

\begin{proof}
	Consider the Quot scheme ${\rm Quot}_{C/\C}(E,k,d-i)$. 
	For ease of notation we denote $A={\rm Quot}_{C/\C}(E,k,d-i)$.
	Let 
	$$0\longrightarrow \ms S\longrightarrow p_1^*E\longrightarrow \ms F\longrightarrow 0$$ 
	be the universal quotient on $C\times A$.
	Consider the relative Quot scheme
	\begin{equation}\label{rel-quot}
		{\rm Quot}_{C\times A/A}(\ms S,0,i)\stackrel{\pi}{\longrightarrow} A\,.
	\end{equation} 
	There is a map 
	\begin{equation}\label{rel-quot to Q}
		{\rm Quot}_{C\times A/A}(\ms S,0,i)\stackrel{\pi'}{\longrightarrow} \mc Q
	\end{equation}
	whose image consists of precisely those quotients $[E\longrightarrow F]$ for which
	${\rm deg}({\rm Tor}(F))\geqslant i$. 
	Recall the locus $A^{\rm tf}$ from \eqref{good locus}.
	One checks easily that 
	\begin{equation}\label{lem2codime1}
		\pi'^{-1}(Z_i)=\pi^{-1}(A^{\rm tf})\,.
	\end{equation}
	In fact, one easily checks that 
	$\pi': \pi^{-1}(A^{\rm tf})\longrightarrow Z_i$ 
	is a bijection on points and so they have the same dimension.
	As $d-i\geqslant \alpha(E,k)$,
	by Remark \ref{dimension quot}(2),
	it follows that $A$ is irreducible of dimension 
	$${\rm dim}(A)=r(d-i)-ek-k(r-k)(g-1)\,.$$
	By Remark \ref{dimension quot}(3),
	it follows that $A^{\rm tf}$ is a dense open subset of $A$.
	If $[E\longrightarrow F]\in A$ is a quotient, let $S_F$
	denote the kernel. The fiber of $\pi$ 
	over this point is the Quot scheme ${\rm Quot}_{C/\C}(S_F,0,i)$,
	which is irreducible and has dimension $(r-k)i$.
	From this it follows that ${\rm Quot}_{C\times A/A}(\ms S,0,i)$
	is irreducible of dimension ${\rm dim}(\mc Q)-ki$.
	Thus, the open set $\pi^{-1}(A^{\rm tf})$ also has the same
	dimension and is irreducible. As this open subset dominates $Z_i$, 
	the claim about the irreducibility and dimension of $Z_i$ follows.
	We have already observed that the image of $\pi'$
	is the locus $\bigcup_{j\geqslant i}Z_j$. As 
	$\pi^{-1}(A^{\rm tf})$ is dense in ${\rm Quot}_{C\times A/A}(\ms S,0,i)$,
	the proof of (1) is complete.
	
	To prove the second assertion, note that 
	$$H^1(E^\vee\otimes F)=H^1(E^\vee\otimes (F/{\rm Tor}(F)))\,.$$
	One checks easily that 
	\begin{equation}\label{lem2codime2}
		\pi'^{-1}((Z_i)_{\rm b})=\pi^{-1}(A^{\rm tf}_{\rm b})\,.
	\end{equation}
	As $\pi$ has constant fiber dimension, we see 
	$${\rm dim}(A^{\rm tf})-{\rm dim}(A^{\rm tf}_{\rm b})=
	{\rm dim}(\pi^{-1}(A^{\rm tf}))-{\rm dim}(\pi^{-1}(A^{\rm tf}_{\rm b}))\,.$$
	By applying Lemma \ref{fiber-dimension-1} to the map $\pi'$, and using 
	\eqref{lem2codime1} and \eqref{lem2codime2}, we get  
	\begin{align*}
		{\rm dim}(A^{\rm tf})-{\rm dim}(A^{\rm tf}_{\rm b})&=
		{\rm dim}(\pi^{-1}(A^{\rm tf}))-{\rm dim}(\pi^{-1}(A^{\rm tf}_{\rm b}))\\
		&={\rm dim}(\pi'^{-1}(Z_i))-{\rm dim}(\pi'^{-1}((Z_i)_{\rm b}))
		\leqslant {\rm dim}(Z_i)-{\rm dim}((Z_i)_{\rm b})\,.
	\end{align*}
	As $d-i\geqslant \beta(E,k,t_1)\geqslant \alpha(E,k)$ it follows 
	from Remark \ref{dimension quot}(2) and (3) that ${\rm Quot}_{C/\C}(E,k,d-i)$
	is irreducible and so 
	${\rm dim}({\rm Quot}_{C/\C}(E,k,d-i))={\rm dim}({\rm Quot}_{C/\C}(E,k,d-i)^{\rm tf})$.
	By Lemma \ref{lemma 1 codim} it follows that 
	$${\rm dim}(A^{\rm tf})-{\rm dim}(A^{\rm tf}_{\rm b})=
	{\rm dim}({\rm Quot}_{C/\C}(E,k,d-i)^{\rm tf})-
		{\rm dim}({\rm Quot}_{C/\C}(E,k,d-i)^{\rm tf}_{\rm b})>t_1\,.$$
	This proves that ${\rm dim}(Z_i)-{\rm dim}((Z_i)_{\rm b})>t_1$.
	This proves (2).
	
	Assertion (3) of the Lemma follows easily using the first two. 
\end{proof}

\section{Flatness of det}

We begin by showing that when $d\gg0$, $\mc Q$ is a local complete intersection. 
This result seems well known to experts (see \cite[Theorem 1.6]{BDW}
and the paragraph following it); however, we include it as we could not 
find a precise reference. 
\begin{lemma}\label{local complete intersection}
	Let $d\geqslant \alpha(E,k)$. Then $\mc Q$ is a local complete intersection scheme.
	In particular, it is Cohen-Macaulay.
\end{lemma}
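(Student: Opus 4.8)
The statement to prove is that $\mc Q = {\rm Quot}_{C/\C}(E,k,d)$ is a local complete intersection scheme when $d \geqslant \alpha(E,k)$, whence it is Cohen--Macaulay. The strategy is to exhibit $\mc Q$, locally around each closed point, as the zero locus of a section of a vector bundle on a smooth variety, cut out in the expected codimension. The natural smooth ambient space is a Quot scheme over $\mb P^1$ or a Grassmannian-type parameter space, but the cleanest route uses the deformation-theoretic criterion directly: at a point $[q\colon E\to F]$ with kernel $K$, the Zariski tangent space is ${\rm Hom}(K,F)$ and the obstruction space is ${\rm Ext}^1(K,F)$, by \cite[Proposition 2.2.8]{HL}. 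So it suffices to show that near every closed point, $\mc Q$ can be presented as the vanishing of $\dim {\rm Ext}^1(K,F)$ equations inside a smooth scheme of dimension $\dim {\rm Hom}(K,F)$, and that the two differ by exactly $\dim \mc Q$.

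First I would record the numerology. By Riemann--Roch on $C$, $\chi(K^\vee\otimes F) = {\rm hom}(K,F) - {\rm ext}^1(K,F)$ is a constant depending only on the discrete invariants: with ${\rm rk}(K) = r-k$, ${\rm deg}(K) = e-d$, ${\rm rk}(F) = k$, ${\rm deg}(F) = d$, one gets $\chi(K^\vee\otimes F) = k(d) - (r-k)\cdot\text{(something)} + \ldots$, and in any case this equals $rd - ek - k(r-k)(g-1)$, which by Remark \ref{dimension quot}(2) is precisely $\dim \mc Q$ (using $d \geqslant \alpha(E,k)$). So the expected dimension of $\mc Q$ at every point equals ${\rm hom}(K,F) - {\rm ext}^1(K,F)$. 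The content of being an l.c.i. is then that this expected dimension is actually achieved, equivalently that the obstruction-theoretic presentation has the right codimension.

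The concrete way I would make this rigorous: realize $\mc Q$ as a closed subscheme of a smooth scheme. Following the standard construction (e.g. the embedding used to construct Quot schemes), choose $n \gg 0$ so that $E(n)$ is globally generated with vanishing $H^1$ uniformly, and realize quotients $E \to F$ via their induced quotients of a trivial bundle after twisting; this presents $\mc Q$ (locally, or after passing to a suitable bundle) as a closed subscheme of a relative Grassmannian $\mathbf{Gr}$, which is smooth, cut out by the condition that a certain map of bundles composes to zero — i.e. as the zero scheme of a section $s$ of a vector bundle $\mc V$ on $\mathbf{Gr}$ whose rank equals the codimension of $\mc Q$ in $\mathbf{Gr}$ at every point where the expected dimension is attained. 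The tangent--obstruction computation above shows the normal-cone dimension count works out; concretely, $\dim_x \mc Q \geqslant \dim \mathbf{Gr} - {\rm rk}\,\mc V$ always (Krull's height theorem), while Remark \ref{dimension quot}(2) gives the matching upper bound $\dim \mc Q = rd - ek - k(r-k)(g-1)$, and one checks this equals $\dim \mathbf{Gr} - {\rm rk}\,\mc V$. Hence $\mc Q$ is cut out in $\mathbf{Gr}$ by the expected number of equations, so it is a local complete intersection; being l.c.i. in a smooth (hence Cohen--Macaulay) scheme, it is Cohen--Macaulay.

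**The main obstacle.** The delicate point is not any single deformation-theory computation — those are standard — but arranging a single global smooth ambient space together with a vector bundle of the correct rank, so that one genuinely has a codimension count valid at \emph{every} closed point simultaneously, rather than just generically. This is exactly where the hypothesis $d \geqslant \alpha(E,k)$ enters, via Remark \ref{dimension quot}(2): it guarantees $\mc Q$ is irreducible of the expected dimension $rd - ek - k(r-k)(g-1)$, which pins down the codimension in the ambient Grassmannian to be constant and equal to ${\rm rk}\,\mc V$. Without irreducibility and the dimension formula one could only conclude the l.c.i. property on an open locus. I would therefore organize the proof so that the only nontrivial input is the citation of Remark \ref{dimension quot}(2) together with \cite[Proposition 2.2.8]{HL}, and the rest is the bookkeeping of the Grassmannian embedding and Krull's principal ideal theorem.
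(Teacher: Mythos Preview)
Your proposal is correct and rests on the same two inputs as the paper's proof---Remark~\ref{dimension quot}(2) for the dimension formula and \cite[Proposition~2.2.8]{HL} for the deformation theory---but you take a longer route than necessary. The paper does not reconstruct any ambient Grassmannian or invoke Krull's height theorem explicitly. Instead it argues as follows: since $\mc Q$ is irreducible (Remark~\ref{dimension quot}(2)), $\dim_t\mc Q$ is constant in $t$; since $\mc K^\vee\otimes\mc F$ is flat over $\mc Q$, the Euler characteristic $\chi(\mc K_t^\vee\otimes\mc F_t)$ is also constant; picking one point $q\in\mc Q^{\rm tf}_{\rm g}$ where $h^1$ vanishes pins down both constants simultaneously as $\chi=h^0(\mc K_q^\vee\otimes\mc F_q)=\dim_q\mc Q$. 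Thus $\dim_t\mc Q=\chi(\mc K_t^\vee\otimes\mc F_t)$ at \emph{every} closed point, and \cite[Proposition~2.2.8]{HL} then gives the l.c.i.\ conclusion directly, because that proposition already packages the local presentation of the Quot scheme as a zero locus of $\mathrm{ext}^1$ equations in a smooth germ of dimension $\mathrm{hom}$. Your explicit Grassmannian embedding and codimension count are essentially re-deriving what is contained in the proof of \cite[Proposition~2.2.8]{HL}; it works, but the paper's argument is shorter and avoids the bookkeeping you flagged as the ``main obstacle.''
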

\begin{proof}
	By Remark \ref{dimension quot}(2), $\mc Q$ is irreducible and so ${\rm dim}_{q}(\mc Q)$ 
	is independent of the closed point $q\in \mc Q$. 
	Let $\mc F$ denote the universal quotient 
	and let $\mc K$ denote the universal kernel on $C\times \mc Q$.
	For 
	a closed point $q\in \mc Q$ we shall denote the restrictions 
	of these sheaves to $C\times q$ by $\mc K_q$ and $\mc F_q$. 
	The sheaf 
	$\mc K$ is locally free on $C\times \mc Q$. It follows 
	that $\mc K^\vee\otimes \mc F$ is flat over $\mc Q$, and so the 
	Euler characteristic of $\mc K^\vee_q\otimes \mc F_q$ is constant, call it $\chi$. 
	As $\mc Q^{\rm tf}_{\rm g}$
	is nonempty, let $q\in \mc Q^{\rm tf}_{\rm g}$ be a closed point.
	As $h^1(\mc K^\vee_q\otimes \mc F_q)=0$, it follows from 
	\cite[Proposition 2.2.8]{HL} that 
	$${\rm dim}_q(\mc Q)=h^0(\mc K^\vee_q\otimes \mc F_q)=
	h^0(\mc K^\vee_q\otimes \mc F_q)-h^1(\mc K^\vee_q\otimes \mc F_q)=\chi\,.$$ 
	Let $t\in \mc Q$ be a closed point. 
	We already observed that ${\rm dim}_{t}(\mc Q)$ 
	is independent of the closed point $t\in \mc Q$ and so is equal to $\chi$. 
	It follows that for all closed points $t\in \mc Q$ we have 
	$${\rm dim}_t(\mc Q)=\chi=h^0(\mc K^\vee_t\otimes \mc F_t)-h^1(\mc K^\vee_t\otimes \mc F_t)\,.$$
	By \cite[Proposition 2.2.8]{HL} it follows that the space 
	$\mc Q$ is a local complete intersection at any closed point and 
	so is also Cohen-Macaulay.
\end{proof}

\begin{lemma}\label{compact lemma codimension}
	Fix a positive integer $t_0$. Let $i_0$ be the smallest 
	integer such that $ki_0>g(C)+t_0$. If $d\geqslant \beta(E,k,g(C)+t_0)+i_0$
	then ${\rm dim}(\mc Q)-{\rm dim}(\mc Q_{\rm b})>g(C)+t_0$.
\end{lemma}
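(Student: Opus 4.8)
The plan is to stratify $\mc Q_{\rm b}$ by the degree of the torsion of the quotient and to bound each stratum using the codimension estimates of the previous two sections. Recall from Section 5 that $\mc Q=\mc Q^{\rm tf}\sqcup\bigsqcup_{i\geqslant 1}Z_i$ as sets; intersecting this decomposition with $\mc Q_{\rm b}$ and using $(Z_i)_{\rm b}=Z_i\cap\mc Q_{\rm b}$ together with $\mc Q^{\rm tf}_{\rm b}=\mc Q^{\rm tf}\cap\mc Q_{\rm b}$ gives $\mc Q_{\rm b}=\mc Q^{\rm tf}_{\rm b}\sqcup\bigsqcup_{i\geqslant 1}(Z_i)_{\rm b}$. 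It therefore suffices to exhibit finitely many (locally) closed subsets of $\mc Q$, each of codimension $>g(C)+t_0$, whose union contains $\mc Q_{\rm b}$.

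First I would dispose of the torsion-free part. Since $d\geqslant\beta(E,k,g(C)+t_0)+i_0\geqslant\beta(E,k,g(C)+t_0)$, Lemma \ref{lemma 1 codim}, applied with the positive integer $g(C)+t_0$ in the role of its parameter, gives ${\rm dim}(\mc Q)-{\rm dim}(\mc Q^{\rm tf}_{\rm b})>g(C)+t_0$. Next, for $1\leqslant i\leqslant i_0$ we have $d-i\geqslant d-i_0\geqslant\beta(E,k,g(C)+t_0)$, so Lemma \ref{lemma 2 codim}(3) with $t_1=g(C)+t_0$ yields ${\rm dim}(\mc Q)-{\rm dim}((Z_i)_{\rm b})>g(C)+t_0+ki>g(C)+t_0$. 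It remains to control $(Z_i)_{\rm b}$ for $i>i_0$, and here the point is that the degree of the torsion by itself already forces enough codimension: since $d-i_0\geqslant\beta(E,k,g(C)+t_0)\geqslant\alpha(E,k)$, Lemma \ref{lemma 2 codim}(1) tells us that $Z_{i_0}$ is irreducible of dimension ${\rm dim}(\mc Q)-ki_0$ and that $\bar Z_{i_0}\supseteq\bigcup_{j\geqslant i_0}Z_j$. In particular every $(Z_i)_{\rm b}$ with $i>i_0$ is contained in $\bar Z_{i_0}$, whence ${\rm dim}((Z_i)_{\rm b})\leqslant{\rm dim}(\mc Q)-ki_0<{\rm dim}(\mc Q)-g(C)-t_0$, the last inequality being exactly the defining property $ki_0>g(C)+t_0$ of $i_0$.

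Assembling these bounds, $\mc Q_{\rm b}$ is contained in $\mc Q^{\rm tf}_{\rm b}\cup\bigcup_{1\leqslant i\leqslant i_0}(Z_i)_{\rm b}\cup\bar Z_{i_0}$, a finite union of sets each of dimension strictly less than ${\rm dim}(\mc Q)-g(C)-t_0$; hence ${\rm dim}(\mc Q)-{\rm dim}(\mc Q_{\rm b})>g(C)+t_0$, as desired. I do not anticipate a genuine obstacle here; the entire content is in the two codimension lemmas already proved, and what is left is bookkeeping. The one thing requiring care — and the reason the hypothesis on $d$ carries the shift $i_0$ and the parameter $g(C)+t_0$ rather than $t_0$ — is to ensure simultaneously that $d-i\geqslant\beta(E,k,g(C)+t_0)$ for every relevant $i\leqslant i_0$, so that Lemmas \ref{lemma 1 codim} and \ref{lemma 2 codim} apply, while the remaining strata $Z_i$ with $i>i_0$ (some of which may well be empty) need no dimension estimate of their own, being swallowed by the closure of the single stratum $Z_{i_0}$.
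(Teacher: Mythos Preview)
Your proof is correct and follows essentially the same approach as the paper: both stratify $\mc Q_{\rm b}$ via the torsion degree, handle $\mc Q^{\rm tf}_{\rm b}$ by Lemma~\ref{lemma 1 codim}, the strata with $1\leqslant i\leqslant i_0$ by Lemma~\ref{lemma 2 codim}(3), and absorb the remaining strata into $\bar Z_{i_0}$ via Lemma~\ref{lemma 2 codim}(1). The arguments are effectively identical.
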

\begin{proof}
	First observe that we can write
	$$\mc Q=\mc Q^{\rm tf}\sqcup \bigsqcup_{i\geqslant 1}Z_i\,.$$
	Only finitely many indices $i$ appear.
	In fact, $i$ can be at most $d-d_{k}(E)$, see \eqref{def d_k(E)}.
	In view of this we get 
	$$\mc Q_{\rm b}=\mc Q^{\rm tf}_{\rm b}\sqcup \bigsqcup_{i\geqslant 1}(Z_i)_{\rm b}\,.$$
	By Lemma \ref{lemma 1 codim}, since $d\geqslant \beta(E,k,g(C)+t_0)$ we have  
	$${\rm dim}(\mc Q)-{\rm dim}(\mc Q^{\rm tf}_{\rm b})>g(C)+t_0\,.$$
	If $1\leqslant i\leqslant i_0$ then  
	$d-i\geqslant d-i_0\geqslant \beta(E,k,g(C)+t_0)$,
	and so by Lemma \ref{lemma 2 codim}(3) we get 
	$${\rm dim}(\mc Q)-{\rm dim}((Z_i)_{\rm b})>g(C)+t_0+ki\,.$$
	By Lemma \ref{lemma 2 codim}(1) we also get that 
	$\bar Z_{i_0}\supset \bigcup_{j\geqslant i_0}Z_j$.
	For $j\geqslant i_0$, 
	$${\rm dim}((Z_j)_{\rm b})\leqslant{\rm dim}(Z_j)\leqslant {\rm dim}(Z_{i_0})= {\rm dim}(\mc Q)-ki_0\,.$$
	This shows that for $j\geqslant i_0$ we have 
	$${\rm dim}(\mc Q)-{\rm dim}((Z_j)_{\rm b})\geqslant ki_0>g(C)+t_0\,.$$
	Combining these shows that ${\rm dim}(\mc Q)-{\rm dim}(\mc Q_{\rm b})>g(C)+t_0$.
	This completes the proof of the Lemma.
\end{proof}

\begin{theorem}\label{det flat}
	Recall the map {\rm det} defined in \eqref{det}. 
	\begin{enumerate}
	\item 
	Let $n_0$ be the smallest integer such that $kn_0>g(C)+1$.
	Let $d\geqslant \beta(E,k,g(C)+1)+n_0$.  
	Then ${\rm det}:\mc Q\longrightarrow {\rm Pic}^d(C)$ is a flat map.
	Further, $\mc Q$ is an integral and normal variety. 
	\item Let $n_1$ be the smallest integer such that $kn_1>g(C)+3$.
	Let $d\geqslant \beta(E,k,g(C)+3)+n_1$. Then $\mc Q$ is locally factorial.
	\end{enumerate}
\end{theorem}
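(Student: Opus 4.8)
The plan is to derive everything from the local complete intersection property (Lemma \ref{local complete intersection}) together with the codimension estimates for the bad locus (Lemma \ref{compact lemma codimension}), using standard facts about Cohen--Macaulay schemes and Serre's criteria. First I would record that under the hypothesis $d\geqslant\beta(E,k,g(C)+1)+n_0\geqslant\alpha(E,k)$, Lemma \ref{local complete intersection} gives that $\mc Q$ is a local complete intersection, hence Cohen--Macaulay; by Remark \ref{dimension quot}(2) it is irreducible of dimension $rd-ek-k(r-k)(g-1)$. To get flatness of $\det$ I would invoke the standard criterion (``miracle flatness'', e.g. \cite[Exercise III.10.9]{Ha} or \cite[Theorem 23.1]{Matsumura}): a morphism from a Cohen--Macaulay scheme to a regular scheme with equidimensional fibers of the expected dimension is flat. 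Here the target ${\rm Pic}^d(C)$ is smooth of dimension $g(C)$, and $\mc Q$ is Cohen--Macaulay; so it suffices to show every nonempty fiber of $\det$ has dimension exactly ${\rm dim}(\mc Q)-g(C)$. On the good locus $\mc Q^{\rm tf}_{\rm g}$ the differential of $\det$ computed in Lemma \ref{description differential} is surjective (the composite ${\rm Hom}(K,F)\xrightarrow{-\delta}{\rm Ext}^1(F,F)\xrightarrow{tr}H^1(C,\mc O_C)$ is surjective because on the good locus $H^1(E^\vee\otimes F)=0$ forces ${\rm Hom}(K,F)\to{\rm Ext}^1(F,F)$ to be surjective, and $tr$ is surjective since ${\rm id}_F$ has trace $k\neq0$ in characteristic $0$), so $\det$ restricted to this open set is smooth with fibers of the expected dimension. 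Then I would apply Lemma \ref{fiber-dimension}(1), with $U=\mc Q^{\rm tf}_{\rm g}$, $X=\mc Q$, $Y={\rm Pic}^d(C)$, $Z=\mc Q_{\rm b}$: the hypothesis $d\geqslant\beta(E,k,g(C)+1)+n_0$ is exactly what Lemma \ref{compact lemma codimension} (with $t_0=1$) needs to conclude ${\rm dim}(\mc Q)-{\rm dim}(\mc Q_{\rm b})>g(C)={\rm dim}(Y)$, whence all nonempty fibers of $\det$ are equidimensional of the expected dimension. This yields flatness.

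For integrality and normality: $\mc Q$ is already irreducible and reduced (a local complete intersection scheme, being Cohen--Macaulay, satisfies Serre's condition $S_2$; and $S_1$ plus reducedness at the generic point — which holds since $\mc Q$ is generically smooth by Remark \ref{dimension quot}(2) — gives reduced, hence integral). For normality I would verify Serre's criterion $R_1+S_2$: $S_2$ holds because $\mc Q$ is Cohen--Macaulay, and $R_1$ holds because the singular locus $\mc Q_{\rm sing}$ is contained in $\mc Q_{\rm b}$ (on $\mc Q^{\rm tf}_{\rm g}$, $\mc Q$ is smooth by \cite[Proposition 2.2.8]{HL} since $H^1=0$ there), and Lemma \ref{compact lemma codimension} with $t_0=1$ gives ${\rm codim}(\mc Q_{\rm b})>g(C)+1\geqslant 2$, so in particular $\mc Q$ is regular in codimension $1$. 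This proves (1).

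For (2), local factoriality, the strategy is the same but needs a stronger codimension bound on the singular locus, which is why the hypothesis jumps to $d\geqslant\beta(E,k,g(C)+3)+n_1$. The key input is a result guaranteeing that a local complete intersection (or Cohen--Macaulay, or even just $\mathbb Q$-factorial in favorable situations) scheme whose singular locus has codimension $\geqslant 4$ is locally factorial — this is a theorem of Grothendieck (\cite[Exp. XI, Cor. 3.14]{SGA2}, building on the Samuel conjecture / Call--Lyubeznik type results): for a local complete intersection local ring $A$ that is factorial in codimension $\leqslant 3$, or more precisely such that the punctured spectrum is ``parafactorial'' in the relevant range, $A$ is factorial. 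Concretely I would use: a Noetherian local complete intersection ring which is regular in codimension $\leqslant 3$ is a UFD (this follows from Grothendieck's parafactoriality theorem \cite[Exp. XI]{SGA2} together with the fact that lci rings are Gorenstein). Applying this to the local rings of $\mc Q$: by Lemma \ref{compact lemma codimension} with $t_0=3$, ${\rm codim}(\mc Q_{\rm b})>g(C)+3\geqslant 4$, and since $\mc Q_{\rm sing}\subset\mc Q_{\rm b}$ we get ${\rm codim}(\mc Q_{\rm sing})\geqslant 4$; combined with $\mc Q$ being lci this gives local factoriality.

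The main obstacle I anticipate is pinning down the precise form of the factoriality criterion in step (2) and checking its hypotheses apply verbatim to local complete intersection rings (one must be careful that Grothendieck's statement is about parafactoriality of the punctured spectrum and translate it into the ``regular in codimension $\leqslant 3$ $\Rightarrow$ UFD'' form for lci rings); the rest is routine assembly of the codimension lemmas with standard commutative algebra. The flatness argument in (1) is essentially immediate once one has both the Cohen--Macaulay property and the equidimensionality of fibers, and the latter is handed to us by Lemma \ref{fiber-dimension} fed with Lemma \ref{compact lemma codimension}.
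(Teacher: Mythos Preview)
Your proposal is correct and follows the same route as the paper (miracle flatness from Cohen--Macaulayness plus constant fiber dimension via Lemmas \ref{fiber-dimension} and \ref{compact lemma codimension}; Serre's criterion for normality; Grothendieck's complete-intersection factoriality theorem for part (2)). One small slip to fix: when applying Lemma \ref{fiber-dimension}(1) you take $U=\mc Q^{\rm tf}_{\rm g}$ but $Z=\mc Q_{\rm b}$, and these are not complementary in $\mc Q$ (points with non-torsion-free quotient but $H^1(E^\vee\otimes F)=0$ lie in neither) --- simply replace $U$ by $\mc Q_{\rm g}$, as the paper does, since your smoothness and differential-surjectivity arguments only use $H^1(E^\vee\otimes F)=0$ and not torsion-freeness of $F$.
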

\begin{proof}
Let $q\in \mc Q$ be a closed point and let
$K$ denote the kernel of the quotient $q$. Then we have a 
short exact sequence 
$$0\longrightarrow K\longrightarrow E\longrightarrow F\longrightarrow 0\,.$$
Applying ${\rm Hom}(-,F)$ and using Lemma \ref{description differential}
we get the following 
diagram, in which the top row is exact.
\begin{equation}\label{eq-smoothness-det}
	\xymatrix{
		{\rm Hom}(K,F)\ar[r]\ar[dr]_{d({\rm det})_q}& {\rm Ext}^1(F,F)\ar[d]^{tr}\ar[r]
			&{\rm Ext}^1(E,F)\ar[r] & {\rm Ext}^1(K,F)\ar[r]&0\\
		&	H^1(C,\mc O_C)
	}
\end{equation}
If $H^1(E^\vee\otimes F)=0$ then we make the following two observations. 
First observe that it follows that $H^1(K^\vee\otimes F)=0$,
which shows that $\mc Q_{\rm g}$ is contained in the smooth
locus of $\mc Q$, by \cite[Proposition 2.2.8]{HL}. 
Second observe that the map ${\rm Hom}(K,F)\longrightarrow {\rm Ext}^1(F,F)$
will be surjective.
As ${\rm Ext}^1(F,F)\longrightarrow H^1(C,\mc O_C)$ is surjective, it follows
that if $H^1(E^\vee\otimes F)=0$ then the diagonal map in the 
above diagram is surjective. However, the diagonal map is precisely
the differential of det at the point $q$. As ${\mc Q}_{\rm g}$ and 
${\rm Pic}^d(C)$ are smooth, it follows that the restriction of 
det to ${\mc Q}_{\rm g}$ is a smooth morphism and so also flat
and dominant.

Assume $d\geqslant \beta(E,k,g(C)+1)+n_0$. 
Applying Lemma \ref{compact lemma codimension} we get
$${\rm dim}(\mc Q)-{\rm dim}(\mc Q_{\rm b})>g(C)+1\,.$$
We observed in Lemma \ref{local complete intersection} 
that $\mc Q$ is a Cohen-Macaulay scheme and so it satisfies Serre's condition
$S_2$. The open subset $\mc Q_{\rm g}$ 
is smooth. As $\mc Q_{\rm b}=\mc Q\setminus \mc Q_{\rm g}$, it follows
that $\mc Q$ satisfies Serre's condition $R_1$. Thus, 
$\mc Q$ is an integral and normal variety. 

In view of Lemma \ref{local complete intersection} and  
\cite[Theorem 23.1]{Mat} or
\cite[\href{https://stacks.math.columbia.edu/tag/00R4}{Tag 00R4}]{Stk},
to prove the first assertion of the theorem, it suffices to show that the fibers 
of det have constant dimension. 
Applying Lemma \ref{fiber-dimension}(1), by taking $U$ to be 
the open subset $\mc Q_{\rm g}$, we get that det is flat.
This proves (1). 

Now we prove (2). 
Assume $d\geqslant \beta(E,k,g(C)+3)+n_1$.
Applying Lemma \ref{compact lemma codimension} we get
$${\rm dim}(\mc Q)-{\rm dim}(\mc Q_{\rm b})>g(C)+3\,.$$
This implies that the singular locus has codimension 4 or more.
Now we use a result of Grothendieck which states 
that if $R$ is a local ring that is a complete intersection 
in which the singular locus has
codimension 4 or more, then $R$ is a UFD. We refer the reader to 
\cite{sga2}, \cite{Call}, \cite[Theorem 1.4]{HT}.
This implies that $\mc Q$ is locally factorial. 
The proof of the theorem is now complete.
\end{proof}

\section{Locus of stable quotients and Picard group of $\mc Q$}\label{section stable quotients}

\subsection{}\label{recall-Bhosle}
In this section we will be using two Quot schemes. Thus, it 
is worth recalling that $\mc Q$ denotes the Quot scheme ${\rm Quot}_{C/\mb C}(E,k,d)$.
We begin by explaining a result from \cite{Bhosle}
that we need. Assume one of the following two holds
\begin{itemize}
	\item $k\geqslant 2$ and $g(C)\geqslant 3$, or
	\item $k\geqslant 3$ and $g(C)=2$.
\end{itemize}
Let $d\geqslant \alpha(E,k)$.
Fix a closed point $P\in C$. For a closed point $q\in \mc Q$,
let $[E\stackrel{q}{\longrightarrow}\mc F_q]$
denote the quotient corresponding to this closed point. 
We may choose $n\gg 0$ such that for all 
$q\in \mc Q^{\rm tf}_{\rm g}$ we have $H^1(C,\mc F_q(nP))=0$
and $\mc F_q(nP)$ is globally generated. As $d\geqslant \alpha(E,k)$, by Remark 
\ref{dimension quot}, it follows that $\mc Q^{\rm tf}_{\rm g}$ is irreducible, 
and so $h^0(C,\mc F_q(nP))$ is independent of $q$. 
Let 
\begin{equation}\label{def N}
	N:=h^0(C,\mc F_q(nP))
\end{equation}
and consider the Quot scheme ${\rm Quot}_{C/\C}(\mc O_C^{\oplus N},k,d+kn)$.
Let $\mc G'$ denote the universal quotient on 
$C\times {\rm Quot}_{C/\C}(\mc O_C^{\oplus N},k,d+kn)$.
Let $R\subset {\rm Quot}_{C/\C}(\mc O_C^{\oplus N},k,d+kn)$ 
be the open subset containing closed points 
$[x:\mc O_C^{\oplus N}\longrightarrow\mc G'_x]$ such that 
$\mc G'_x$ is torsion free, $H^1(C,\mc G'_x)=0$ and the quotient
map $\mc O_C^{\oplus N}\longrightarrow \mc G'_x$ induces an isomorphism 
$\C^N\stackrel{\sim}{\longrightarrow} H^0(C,\mc G'_x)$.
This is the space $R$ in \cite[page 246, Proposition 1.2]{Bhosle},
see \cite[page 246, Notation 1.1]{Bhosle}.
The space $R$ is a smooth equidimensional scheme. 
Let $R^s$ (respectively, $R^{ss}$) denote the open subset 
of $R$ consisting of closed points $x$ for which $\mc G'_x$
is stable (respectively, semistable). 
In \cite[page 246, Proposition 1.2]{Bhosle} it is proved
that ${\rm dim}(R)-{\rm dim}(R\setminus R^s)\geqslant 2$.

Let 
\begin{align*}
	p_1:C\times {\rm Quot}_{C/\C}(\mc O_C^{\oplus N},k,d+kn)&\longrightarrow C\\
	p_2:C\times {\rm Quot}_{C/\C}(\mc O_C^{\oplus N},k,d+kn)&\longrightarrow 
	{\rm Quot}_{C/\C}(\mc O_C^{\oplus N},k,d+kn)
\end{align*}
denote the projections.
Let 
$$\mc G:=\mc G'\otimes p_1^*(\mc O_C(-nP))\,.$$
Let $R'\subset R$ be the open subset containing closed points 
$x$ for which $H^1(C,E^\vee\otimes \mc G_x)=0$. 
By Cohomology and Base change theorem it follows
that $p_{2*}(p_1^*E^\vee\otimes \mc G)$ is locally 
free on $R'$.
The fiber over a point $x\in R'$ is isomorphic to 
the vector space ${\rm Hom}(E,\mc G_x)$.
Consider the projective bundle 
\begin{equation}\label{def Theta}
	\mb P(p_{2*}(p_1^*E^\vee\otimes \mc G)^\vee)\stackrel{\Theta}{\longrightarrow} R'\,.
\end{equation}
The fiber of $\Theta$ over a point $x\in R'$ is the space
of lines in the vector space ${\rm Hom}(E,\mc G_x)$.
For ease of notation we denote 
$\mb P(p_{2*}(p_1^*E^\vee\otimes \mc G)^\vee)$ by $\mb P$.
Denote the projection maps from $C\times \mb P$ by
$$p'_1:C\times \mb P\longrightarrow C\,,\qquad  p'_2:C\times \mb P\longrightarrow \mb P\,.$$
Consider the following Cartesian square
\[\xymatrix{
	C\times \mb P\ar[r]^{\t \Theta}\ar[d]_{p'_2} & C\times R'\ar[d]^{p_2}\\
	\mb P\ar[r]^\Theta & R'
}
\]
Let $\mc O(1)$ denote the tautological line bundle 
on $\mb P$. Then we have a map of sheaves on $C\times \mb P$
\begin{equation}\label{map of sheaves C times P-1}
	p'^*_1 E\longrightarrow \t \Theta^*\mc G\otimes p'^*_2\mc O(1)\,.
\end{equation}
A closed point $v\in \mb P$ corresponds to the closed point $\Theta(v)\in R'$
and a line spanned by some $w_v \in {\rm Hom}(E,\mc G_{\Theta(v)})$. The restriction
of \eqref{map of sheaves C times P-1} to $C\times v$ gives the map $w_v:E\longrightarrow \mc G_{\Theta(v)}$.
Let $\mb U\subset \mb P$ denote the open subset 
parametrizing points $v$ such that $w_v$ is surjective.
On $C\times \mb{U}$ we have a surjection 
\begin{equation}
	p'^*_1 E\longrightarrow \t \Theta^*\mc G\otimes p'^*_2\mc O(1)\,.
\end{equation}
This defines a morphism 
\begin{equation}\label{def Psi}
	\Psi:\mb{U}\longrightarrow \mc Q^{\rm tf}_{\rm g}\,.
\end{equation}
\begin{lemma}
	$\Psi$ is surjective on closed points. 
\end{lemma}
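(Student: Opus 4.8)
The plan is a direct definition chase: given any closed point $q \in \mc Q^{\rm tf}_{\rm g}$, I will exhibit a closed point $v \in \mb U$ with $\Psi(v) = q$. Write the quotient attached to $q$ as $E \stackrel{q}{\longrightarrow} \mc F_q$, so that $\mc F_q$ is torsion free and $H^1(C, E^\vee \otimes \mc F_q) = 0$.

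First I would invoke the choice of $n$ fixed in \ref{recall-Bhosle}. Since $q \in \mc Q^{\rm tf}_{\rm g}$, the sheaf $\mc F_q(nP)$ is globally generated, satisfies $H^1(C, \mc F_q(nP)) = 0$, and has $h^0(C, \mc F_q(nP)) = N$. Picking a basis of $H^0(C, \mc F_q(nP))$ therefore produces a surjection $\mc O_C^{\oplus N} \longrightarrow \mc F_q(nP)$ which induces an isomorphism $\C^N \stackrel{\sim}{\longrightarrow} H^0(C, \mc F_q(nP))$; this is precisely the data of a closed point $x$ of $R$ with $\mc G'_x = \mc F_q(nP)$, and hence with $\mc G_x = \mc G'_x \otimes \mc O_C(-nP) = \mc F_q$. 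Next I would check that $x$ actually lies in $R' \subset R$: by definition $R'$ is cut out by the vanishing $H^1(C, E^\vee \otimes \mc G_x) = 0$, and under the identification $\mc G_x = \mc F_q$ this is exactly the hypothesis $H^1(C, E^\vee \otimes \mc F_q) = 0$ built into the ``good'' locus $\mc Q^{\rm tf}_{\rm g}$.

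It then remains to lift $x$ to a point of $\mb U$. The original surjection $q \colon E \longrightarrow \mc F_q = \mc G_x$ is a nonzero element of $\mathrm{Hom}(E, \mc G_x)$, so the line it spans defines a closed point $v \in \mb P$ lying over $x \in R'$. By the pointwise description of \eqref{map of sheaves C times P-1}, the homomorphism $w_v$ is a nonzero scalar multiple of $q$; in particular $w_v$ is surjective, so $v \in \mb U$. Finally, by the construction of $\Psi$ in \eqref{def Psi}, the point $\Psi(v)$ is the quotient $[E \stackrel{w_v}{\longrightarrow} \mc G_x]$, and since rescaling the map does not change the quotient this equals $[E \stackrel{q}{\longrightarrow} \mc F_q] = q$. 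This gives surjectivity on closed points. There is no real obstacle here; the only steps demanding care are the two membership checks --- that $x \in R'$ and that $v \in \mb U$ --- and these are forced, respectively, by the $H^1$-vanishing in the definition of $\mc Q^{\rm tf}_{\rm g}$ and by the surjectivity of the quotient $q$ we started with.
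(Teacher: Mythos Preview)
Your proof is correct and follows exactly the approach of the paper: construct a point of $R$ from a basis of $H^0(C,\mc F_q(nP))$, then lift to $\mb U$ via the line spanned by $q$ itself. The paper's version is much terser (it stops after producing the surjection $\mc O_C^{\oplus N}\to \mc F_q(nP)$ and says ``it follows easily''), whereas you have spelled out the membership checks $x\in R'$ and $v\in \mb U$ explicitly; there is no substantive difference.
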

\begin{proof}
	Let $[q:E\longrightarrow \mc F_q]\in \mc Q^{\rm tf}_{\rm g}$ be a closed point. 
		By our choice of $n$ and $N$ (see \eqref{def N}), we have that 
		$\mc F_q(nP)$ is globally generated and $N=h^0(C,\mc F_q(nP))$. Therefore, by choosing
		a basis for $H^0(C,\mc F_q(nP))$ 
		we get a surjection
		$[\mc O_C^N\longrightarrow \mc F_q(nP)]$.
		Now it follows easily that $\Psi$ is surjective 
		on closed points.
\end{proof}

Now further assume $d \geqslant {\rm max}\{\alpha(E,k),k\mu_0(E,k)\}$. By 
Lemma \ref{lemma to define good locus} we have 
$H^1(C,E^\vee\otimes \mc G_x)=0$ for $x\in R^s$.
Thus, we have inclusions of open sets $R^s\subset R'\subset R$. 
Let $\mb P^s\subset \mb P$ denote the inverse image of $R^s$ 
under the map $\Theta$. Similarly, let $\mb U^s\subset \mb U$ denote the inverse image of $R^s$ 
under the restriction of $\Theta$ to $\mb U$.
Let 
\begin{equation}\label{def Q^s}
	\mc Q^{\rm s}:=\{[E\longrightarrow F]\in \mc Q\,\vert\, \text{$F$ is stable}\}.
\end{equation}
As $d \geqslant k\mu_0(E,k)$, by 
Lemma \ref{lemma to define good locus} we have 
$H^1(C,E^\vee\otimes F)=0$ for $[E\longrightarrow F]\in \mc Q^s$.
It follows that $\mc Q^s\subset \mc Q^{\rm tf}_{\rm g}$.
It is easily checked that 
\begin{equation}\label{inverse image Q^s}
	\Psi^{-1}(\mc Q^s)=\mb U^s\,.
\end{equation} 
The group ${\rm PGL}(N)$ acts freely on $\mb{P}^s$
and leaves the open subset $\mb U^s$ invariant.
Consider the trivial action of ${\rm PGL}(N)$ 
on $\mc Q^s$.
Then the restriction $\Psi:\mb U^s\longrightarrow \mc Q^s$ 
is ${\rm PGL}(N)$-equivariant.
It is clear that the restriction of the map $\Theta:\mb P^s\longrightarrow R^s$ 
is also ${\rm PGL}(N)$-equivariant.
Let $M^s_{k,d+kn}$ (respectively, $M_{k,d+kn}$) 
denote the moduli space of stable (respectively, semistable) 
bundles of rank $k$ and degree $d+kn$. Then $M^s_{k,d+kn}$
is the GIT quotient 
$$\psi: R^s{\longrightarrow}R^s/\!\!/{\rm PGL}(N)=M^s_{k,d+kn}\,.$$
Let $p_C:C\times \mc Q\longrightarrow C$ denote the projection
and let $p_C^*E\longrightarrow \mc F$ denote the universal quotient
on $C\times \mc Q$. The sheaf $p_C^*\mc O_C(nP)\otimes \mc F$
on $C\times \mc Q^s$ defines a morphism 
$\mc Q^s\stackrel{\theta}{\longrightarrow} M^s_{k,d+kn}$.
One easily checks that we have the following 
commutative diagram, in which all arrows are surjective
on closed points 
\begin{equation}\label{diagram of principal bundles}
	\xymatrix{
		\mb U^s\ar[rr]^\Psi\ar[d]_{\Theta_{\mb U^s}} && \mc Q^s\ar[d]^{\theta}\\
		R^s\ar[rr]^{\psi}&& M^s_{k,d+kn}\,.
	}
\end{equation}
The map $\psi$ is a principal ${\rm PGL}(N)$-bundle. For a closed 
point $x\in R^s$, the points in the fiber $\Theta^{-1}_{\mb U^s}(x)$ are in bijection
with the points in the fiber $\theta^{-1}(\psi(x))$. Here we use the stability 
of the quotient sheaf to assert that no two distinct points in the fiber $\Theta^{-1}_{\mb U^s}(x)$ 
map to the same point in the fiber $\theta^{-1}(\psi(x))$.
The natural map
from $\mb U^s$ to the Cartesian product of $\psi$ and $\theta$
is a bijective map of smooth varieties and hence an isomorphism.
This shows that the above diagram is Cartesian. 

In this section we shall compute the Picard group of $\mc Q$ when $d\gg0$.
As we saw in Theorem \ref{det flat}, $\mc Q$ is locally factorial
and so the Picard group is isomorphic to the divisor class group. 
Let $CH^1(\mc Q)$ denote the divisor class group of $\mc Q$.
We shall first show that $CH^1(\mc Q)\stackrel{\sim}{\longrightarrow}CH^1(\mc Q^s)$
and then use the diagram \eqref{diagram of principal bundles} to compute $CH^1(\mc Q^s)$.

In the following Lemma we shall use the fact that $\mb U$ is irreducible.
This is easily seen as follows. The moduli space $M^s_{k,k+dn}$
is an integral scheme. It easily follows that $R^s$ is irreducible
as $M^s_{k,k+dn}$ is the GIT quotient $R^s/\!\!/{\rm PGL}(N)$.  
By \cite[Proposition 1.2]{Bhosle} we have that 
${\rm dim}(R)-{\rm dim}(R\setminus R^s)\geqslant 2$.
As $R$ is equidimensional, it follows that $R$ is irreducible. 
As $R$ is smooth it follows that $R$ is an integral scheme
and so is $R'$. It follows that $\mb U$ is integral. 
\begin{lemma}\label{pic of Q and Q^s}
	Assume one of the following two holds
	\begin{itemize}
		\item $k\geqslant 2$ and $g(C)\geqslant 3$, or
		\item $k\geqslant 3$ and $g(C)=2$.
	\end{itemize}
	Also assume $d\geqslant {\rm max}\{\alpha(E,k)+1,k\mu_0(E,k),\beta(E,k,1) \}$. 
	Then the  map $CH^1(\mc Q){\longrightarrow}CH^1(\mc Q^s)$
	is an isomorphism. 
\end{lemma}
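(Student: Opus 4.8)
The plan is to reduce the statement to a codimension estimate. Recall that $\mc Q$ is an integral normal variety (Theorem \ref{det flat}); for such a scheme, removing a closed subset of codimension $\geq 2$ changes neither the group of Weil divisors nor the subgroup of principal divisors, and hence does not change the divisor class group. So it suffices to prove that $\mc Q\setminus \mc Q^s$ has codimension $\geq 2$ in $\mc Q$. I will establish this by splitting $\mc Q\setminus \mc Q^s=(\mc Q\setminus \mc Q^{\rm tf}_{\rm g})\sqcup(\mc Q^{\rm tf}_{\rm g}\setminus \mc Q^s)$ — a disjoint union since $\mc Q^s\subseteq \mc Q^{\rm tf}_{\rm g}$ (already noted, using $d\geq k\mu_0(E,k)$) — and bounding each piece.

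For the first piece, $\mc Q\setminus \mc Q^{\rm tf}_{\rm g}=(\mc Q\setminus \mc Q^{\rm tf})\sqcup \mc Q^{\rm tf}_{\rm b}$. Since $d\geq \alpha(E,k)+1$, Lemma \ref{lemma 2 codim}(1) applies to $Z_1$ and gives $\overline{Z_1}\supseteq \bigcup_{j\geq 1}Z_j=\mc Q\setminus \mc Q^{\rm tf}$ with $\dim Z_1=\dim \mc Q-k$, so this part has codimension $k\geq 2$; and since $d\geq \beta(E,k,1)$, Lemma \ref{lemma 1 codim} with $t_0=1$ gives $\dim \mc Q-\dim \mc Q^{\rm tf}_{\rm b}>1$. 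Hence the first piece has codimension $\geq 2$ in $\mc Q$.

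The second piece is the heart of the matter, and is where Bhosle's estimate enters. I will use the morphism $\Psi\colon \mb U\to \mc Q^{\rm tf}_{\rm g}$, which is surjective on closed points, with $\mb U$ irreducible (established above) and $\mc Q^{\rm tf}_{\rm g}$ irreducible (a nonempty open subset of $\mc Q$); by \eqref{inverse image Q^s} we have $\Psi^{-1}(\mc Q^s)=\mb U^s=\mb U\cap \Theta^{-1}(R^s)$. As $\mc Q^s$ is open in $\mc Q$, the set $Y':=\mc Q^{\rm tf}_{\rm g}\setminus \mc Q^s$ is closed in $\mc Q^{\rm tf}_{\rm g}$, and $\Psi^{-1}(Y')=\mb U\setminus \mb U^s=\mb U\cap \Theta^{-1}(R'\setminus R^s)$. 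First I bound $\mb U\setminus \mb U^s$ inside $\mb U$: by \cite[Proposition 1.2]{Bhosle} we have $\dim R-\dim(R\setminus R^s)\geq 2$, and since $R'$ is a dense open subscheme of the irreducible $R$ this yields ${\rm codim}_{R'}(R'\setminus R^s)\geq 2$; pulling back along the projective bundle $\Theta\colon \mb P\to R'$ and intersecting with the dense open subset $\mb U\subseteq \mb P$ preserves this, so ${\rm codim}_{\mb U}(\mb U\setminus \mb U^s)\geq 2$. Now I apply Lemma \ref{fiber-dimension-1} to $\Psi$ and the closed subset $Y'$: it gives $\dim \mb U-\dim \Psi^{-1}(Y')\leq \dim \mc Q^{\rm tf}_{\rm g}-\dim Y'$, that is ${\rm codim}_{\mc Q^{\rm tf}_{\rm g}}(Y')\geq {\rm codim}_{\mb U}(\mb U\setminus \mb U^s)\geq 2$; since $\mc Q^{\rm tf}_{\rm g}$ is dense and open in $\mc Q$, also ${\rm codim}_{\mc Q}(Y')\geq 2$. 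Combining the two pieces, $\mc Q\setminus \mc Q^s$ has codimension $\geq 2$ in $\mc Q$, and the Lemma follows.

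The main obstacle is the chain of codimension comparisons in the last paragraph, and in particular invoking Lemma \ref{fiber-dimension-1} in the correct direction: surjectivity of $\Psi$ forces the codimension of the preimage $\Psi^{-1}(Y')$ in $\mb U$ to be \emph{at most} the codimension of $Y'$ in $\mc Q^{\rm tf}_{\rm g}$, which is exactly what is needed to transport the lower bound from $\mb U$ (where, via the projective bundle $\mb P\to R'\subseteq R$, Bhosle's estimate lives) down to $\mc Q^{\rm tf}_{\rm g}$ and then to $\mc Q$. One must also carefully keep track that $R$, $R'$, $\mb P$, $\mb U$ and $\mc Q^{\rm tf}_{\rm g}$ are all irreducible of the expected dimensions so that the passage from codimension in $R$ to codimension in $R'$, in $\mb P$, and in the open set $\mb U$ is legitimate.
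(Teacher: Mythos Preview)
Your argument is correct and matches the paper's: excise $\bar Z_1$ (codimension $k\geqslant 2$ by Lemma~\ref{lemma 2 codim}(1)), then $\mc Q^{\rm tf}_{\rm b}$ (Lemma~\ref{lemma 1 codim} with $t_0=1$), then $\mc Q^{\rm tf}_{\rm g}\setminus\mc Q^s$ by transporting Bhosle's estimate on $R\setminus R^s$ through the projective bundle $\Theta$ and down along $\Psi$ via Lemma~\ref{fiber-dimension-1}. One quibble: your appeal to Theorem~\ref{det flat} for normality of $\mc Q$ is not warranted by this lemma's hypothesis (that theorem needs $d\geqslant\beta(E,k,g(C)+1)+n_0$, not just $\beta(E,k,1)$), but normality is in any case unnecessary here---the codimension-$2$ excision property for $CH^1$ holds for any Noetherian integral scheme, and integrality already follows from $d\geqslant\alpha(E,k)$ (irreducibility by Remark~\ref{dimension quot}(2), reducedness since $\mc Q$ is Cohen--Macaulay by Lemma~\ref{local complete intersection} and generically smooth).
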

\begin{proof}
	Recall the definition of $Z_1$ from \eqref{def Z_i}
	and observe that $\mc Q^{\rm tf}=\mc Q\setminus \bar Z_1$. 
	Taking $i=1$ in Lemma \ref{lemma 2 codim}(1) we get  
	${\rm dim}(\mc Q)-{\rm dim}(\bar Z_1)\geqslant k$.
	Since $k\geqslant 2$, it follows that 
	$CH^1(\mc Q)=CH^1(\mc Q^{\rm tf})$.
	
	By Lemma \ref{lemma 1 codim} it follows that 
	$${\rm dim}(\mc Q^{\rm tf})-{\rm dim}(\mc Q^{\rm tf}_{\rm b})=
		{\rm dim}(\mc Q)-{\rm dim}(\mc Q^{\rm tf}_{\rm b})>1\,.$$
	Observe that $\mc Q^{\rm tf}_{\rm g}=\mc Q^{\rm tf}\setminus \mc Q^{\rm tf}_{\rm b}$.
	It follows that $CH^1(\mc Q^{\rm tf})=CH^1(\mc Q^{\rm tf}_{\rm g})$.
	
	We had observed earlier that $\mc Q^s\subset \mc Q^{\rm tf}_{\rm g}$.
	To prove the Lemma it suffices to show that 
	$${\rm dim}(\mc Q^{\rm tf}_{\rm g})-{\rm dim}(\mc Q^{\rm tf}_{\rm g}\setminus \mc Q^s)>1\,.$$
	We will now show this.

	As $d \geqslant k\mu_0(E,k)$, by 
	Lemma \ref{lemma to define good locus} we have 
	$H^1(C,E^\vee\otimes \mc G_x)=0$ for $x\in R^s$.
	We have already checked above, see \eqref{inverse image Q^s}, 
	that $\Psi^{-1}(\mc Q^s)=\mb U^s$.
	
	As the map $\Theta$
	is flat and $\mb U$ is integral, it follows using Lemma \ref{fiber-dimension-1} 
	(applied to the map $\Psi:\mb U\longrightarrow \mc Q^{\rm tf}_{\rm g}$)
	that 
	$$2\leqslant {\rm dim}(R')-{\rm dim}(R'\setminus R^s)=
		{\rm dim}(\mb U)-{\rm dim}(\mb U\setminus \mb U^s)\leqslant 
		{\rm dim}(\mc Q^{\rm tf}_{\rm g})-{\rm dim}(\mc Q^{\rm tf}_{\rm g}\setminus \mc Q^s)\,.$$
	This completes the proof of the Lemma. 
\end{proof}

\begin{lemma}\label{Picard group of U^s}
	Let $r-k\geqslant2$. Let $d\geqslant {\rm max}\{\alpha(E,k),k\mu_0(E,k)+k\}$. 
	The natural map $CH^1(\mb P^s)\longrightarrow CH^1(\mb U^s)$ is an isomorphism.
\end{lemma}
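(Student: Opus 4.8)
The strategy is to show that the complement $\mb P^s \setminus \mb U^s$ has codimension at least $2$ in $\mb P^s$, since then the restriction map on divisor class groups (equivalently Picard groups, as $\mb P^s$ is smooth) is an isomorphism by the standard excision sequence for Chow groups of divisors. Recall from \eqref{def Theta} that $\mb P^s = \mb P(p_{2*}(p_1^*E^\vee\otimes \mc G)^\vee)\times_{R'}R^s$ is a projective bundle of rank $\dim\mathrm{Hom}(E,\mc G_x) - 1$ over $R^s$, which is smooth and irreducible; by \eqref{map of sheaves C times P-1} a closed point $v\in\mb P^s$ over $x\in R^s$ is a line $\C\cdot w_v\subset\mathrm{Hom}(E,\mc G_x)$, and $v\in\mb U^s$ precisely when $w_v\colon E\to\mc G_x$ is surjective. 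So I must bound the locus of non-surjective homomorphisms.

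First I would fix $x\in R^s$ and estimate, inside the fiber $\Theta^{-1}(x)\cong\mb P(\mathrm{Hom}(E,\mc G_x))$, the dimension of the locus of lines $\C\cdot w$ with $w$ not surjective. A non-surjective $w\colon E\to\mc G_x$ factors through some proper subsheaf $\mc G'\subsetneq\mc G_x(nP)$ (after the twist, working with $\mc G'_x$), and since $\mc G_x(nP) = \mc G'_x$ is stable of slope $(d+kn)/k$, any proper nonzero subsheaf has slope strictly less, hence the image of $w$ is a torsion-free quotient of $E$ of rank $\le k$ and, crucially, of bounded (below) degree; the key point is that such quotients form a bounded family and the space of homomorphisms $E$ onto a fixed such proper quotient, composed with an inclusion into $\mc G'_x$, has codimension growing. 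Concretely, I would bound $\dim$ of the non-surjective locus in each fiber by $\dim\mathrm{Hom}(E,\mc G_x) - (r-k)$ or so, using that $\mathrm{Hom}(E,\mc G_x)$ has the expected dimension (because $H^1(E^\vee\otimes\mc G_x)=0$ on $R'\supset R^s$) while homomorphisms into proper subsheaves of corank $r-k\ge 1$ in the target direction drop the dimension; the hypothesis $r-k\ge2$ is exactly what gives codimension $\ge2$. The hypothesis $d\ge k\mu_0(E,k)+k$ ensures that after replacing $d$ by $d$ shifted we still stay in the good range where the relevant $H^1$ vanishing holds for all the subsheaves that arise, so the dimension counts are uniform.

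Having bounded the non-surjective locus fiberwise by something of codimension $\ge 2$ in each fiber $\Theta^{-1}(x)$, I would conclude by a relative dimension argument: since $\Theta\colon\mb P^s\to R^s$ is a (flat, equidimensional) projective bundle and the non-surjective locus $\mb P^s\setminus\mb U^s$ meets every fiber in codimension $\ge 2$ (or is empty), it has codimension $\ge 2$ in the total space $\mb P^s$. Then the localization exact sequence
$$CH^1(\mb P^s\setminus\mb U^s)\longrightarrow CH^1(\mb P^s)\longrightarrow CH^1(\mb U^s)\longrightarrow 0$$
together with $CH^1(\mb P^s\setminus\mb U^s)$ being supported in codimension $\ge 2$ (hence contributing $0$ at the level of Weil divisor classes) shows $CH^1(\mb P^s)\to CH^1(\mb U^s)$ is an isomorphism.

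The main obstacle I anticipate is the fiberwise dimension estimate on the non-surjective locus: one needs to argue carefully that homomorphisms $E\to\mc G_x$ whose image lies in a proper subsheaf form a subvariety of codimension at least $r-k$ in $\mathrm{Hom}(E,\mc G_x)$, uniformly over the bounded family of possible images. This requires combining the stability of $\mc G_x(nP)$ (to control which subsheaves can occur and bound their degrees from below), the boundedness of torsion-free quotients of $E$ of bounded rank and degree, and the $H^1$-vanishing from Lemma \ref{lemma to define good locus} (applied with the shifted degree, which is why $d\ge k\mu_0(E,k)+k$ appears) to keep all the relevant $\mathrm{Hom}$-spaces of expected dimension. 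The remaining steps — the relative-to-absolute codimension passage and the Chow-group excision — are routine.
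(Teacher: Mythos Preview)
Your reduction to a fiberwise codimension estimate is correct and matches the paper: one must show that for each $x\in R^s$ the non-surjective locus in $\Theta^{-1}(x)\cong\mb P({\rm Hom}(E,F)^\vee)$ (with $F=\mc G_x$) has codimension $\geqslant 2$, and then the divisor-class-group statement follows by excision as you describe. The passage from fiberwise codimension to global codimension over the flat bundle $\Theta$ is routine.

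However, your proposed fiberwise argument --- parametrizing non-surjective maps by factoring through a proper image subsheaf, invoking stability of $\mc G'_x$ to bound the slopes, then appealing to boundedness --- is too vague to constitute a proof, and as written is not clearly correct. You do not explain which subsheaves arise, how to parametrize them, or why the resulting stratification yields total codimension $\geqslant r-k$; the phrase ``homomorphisms into proper subsheaves of corank $r-k\geqslant 1$ in the target direction drop the dimension'' conflates several different notions of corank and does not by itself produce a bound. Carrying this out would require controlling, for each rank $j\leqslant k$ and each admissible degree, the dimension of the space of pairs (quotient of $E$ of rank $j$, embedding of that quotient into $F$), and summing over an unbounded range of degrees --- a substantial computation you have not begun.

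The paper's approach to the fiberwise estimate is quite different and much more direct. It uses the pointwise criterion for surjectivity of a sheaf map on a curve: $w\colon E\to F$ is surjective iff $w\vert_c\colon E\vert_c\to F\vert_c$ is surjective for every $c\in C$. The non-surjective locus is then split into two pieces. First, the locus $T$ of maps with $w\vert_c=0$ for some $c$: this is the image of $\mb P(\mc V^\vee)\subset \mb P({\rm Hom}(E,F)^\vee)\times C$, where $\mc V$ is the bundle on $C$ with fiber ${\rm Hom}(E,F(-c))$; the hypothesis $d\geqslant k\mu_0(E,k)+k$ is used exactly here, to ensure $H^1(E^\vee\otimes F(-c))=0$ so that $\mc V$ has the expected rank, yielding ${\rm codim}\,T\geqslant rk-1\geqslant 2$. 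Second, on the complement $V=\mb P({\rm Hom}(E,F)^\vee)\setminus T$ one has a morphism $\kappa\colon C\times V\to \mb P(E\otimes F^\vee)$ sending $(c,[w])$ to $[w\vert_c]$; this is shown to be flat, and the determinantal locus in $\mb P(E\vert_c\otimes F\vert_c^\vee)$ of non-surjective linear maps has codimension $r-k+1$ (as in \cite[Chapter II, \S2]{ACGH1}), so its preimage under the flat map $\kappa$ has codimension $r-k+1$ in $C\times V$, and its projection to $V$ has codimension $\geqslant r-k\geqslant 2$. The union $T\cup p_V(\kappa^{-1}(Z))$ is the full non-surjective locus.

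In short: your framework is right, but the heart of the argument is the fiberwise bound, and the paper's pointwise/determinantal method gives this cleanly, whereas your factor-through-image approach would require substantially more work to make rigorous. Your interpretation of the hypothesis $d\geqslant k\mu_0(E,k)+k$ is close but not quite right: it is used for $H^1(E^\vee\otimes F(-c))=0$, i.e.\ for the single twist $F(-c)$, not for a general family of subsheaves.
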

\begin{proof}
	It suffices to show that 
	${\rm dim}(\mb P^s)-{\rm dim}(\mb P^s\setminus\mb U^s)\geqslant 2$.
	Let $[x:\mc O_C^{\oplus N}\longrightarrow F]$ be a quotient corresponding to 
	a closed point $x\in R^s$. It suffices to show that 
	${\rm dim}(\Theta^{-1}(x))-{\rm dim}(\Theta^{-1}(x)\setminus\mb U^s)\geqslant 2$
	for every closed point $x\in R^s$.
	We now show this. 
	
	The space $\Theta^{-1}(x)$ is the space $\mb P({\rm Hom}(E,F)^\vee)$ 
	parametrizing lines in the vector 
	space ${\rm Hom}(E,F)$. 
	Let $c\in C$ be a closed point. 
	As $F$ is stable, note $\mu_{\rm min}(F(-c))=\mu(F)-1$.
	As $d\geqslant k\mu_0(E,k)+k$, it follows that 
	$$\mu_{\rm min}(F(-c))=\mu(F)-1=\frac{d-k}{k}\geqslant \mu_0(E,k)\,.$$ 
	Let $p_i$ denote the projections from $C\times C$. Let $\Delta$ denote 
	the diagonal in $C\times C$. Consider the short exact sequence 
	of sheaves on $C\times C$
	given by 
	$$0\longrightarrow p_1^*(E^\vee\otimes F)(-\Delta)\longrightarrow p_1^*(E^\vee\otimes F)\longrightarrow \Delta_*(E^\vee\otimes F)\longrightarrow 0\,.$$
	By Lemma 
	\ref{lemma to define good locus} we have $H^1(E^\vee\otimes F(-c))=0$.
	Applying $p_{2*}$ to the above, 
	we get that the sheaf  
	$$\mc V:=p_{2*}(p_1^*(E^\vee\otimes F)(-\Delta))\,,$$ 
	which is locally free on $C$,
	sits in a short exact sequence 
	$$0\longrightarrow \mc V\longrightarrow {\rm Hom}(E,F)\otimes \mc O_C\longrightarrow E^\vee\otimes F\longrightarrow 0\,.$$
	The restriction of the above sequence to a closed point $c\in C$ gives the 
	short exact sequence of vector spaces 
	\begin{equation}\label{ses-1}
	0\longrightarrow {\rm Hom}(E,F(-c))\longrightarrow {\rm Hom}(E,F)
		\longrightarrow {\rm Hom}(E\vert_c,F\vert_c)\longrightarrow 0\,.
	\end{equation}
	Consider the closed subset $\mb P(\mc V^\vee)\subset \mb P({\rm Hom}(E,F)^\vee)\times C$.
	Let $T\subset \mb P({\rm Hom}(E,F)^\vee)$ denote the image of $\mb P(\mc V^\vee)$ under the 
	projection map 
	$$\mb P({\rm Hom}(E,F)^\vee)\times C \longrightarrow \mb P({\rm Hom}(E,F)^\vee)\,.$$ 
	Then $T$ is a closed subset and set theoretically it is the union 
	$$T=\bigcup_{c\in C}\mb P({\rm Hom}(E,F(-c))^\vee)\,.$$
	As $r-k\geqslant 2$ we have $rk\geqslant (k+2)k>2$. Therefore,
	\begin{equation}\label{codimension T}
	{\rm dim}(\mb P({\rm Hom}(E,F)^\vee))-{\rm dim}(T)\geqslant 
		{\rm dim}(\mb P({\rm Hom}(E,F)^\vee))-{\rm dim}(\mb P(\mc V^\vee))=rk-1\geqslant2\,.
	\end{equation}

	Let $V$ denote the open set $\mb P({\rm Hom}(E,F)^\vee)\setminus T$. 
	Let $\mc O(1)$ denote the restriction of the 
	tautological bundle on $\mb P({\rm Hom}(E,F)^\vee)$ to $V$. Let $p_C$ denote the projection from
	$C\times V$ to $C$ and let $p_V$ denote the projection to $V$. Consider the canonical
	map of sheaves on $C\times V$
	\begin{equation}\label{canonical map of sheaves C times V}
	p_C^*(E\otimes F^\vee)\longrightarrow {\rm Hom}(E,F)^\vee \otimes \mc O_{C\times V}
		\longrightarrow p_V^*\mc O(1)\,.
	\end{equation}
	Let $\varphi\neq 0$ be an element in ${\rm Hom}(E,F)$ such that the line $[\varphi]$ it 
	defines is in $V$. The dual of equation \eqref{canonical map of sheaves C times V} restricted to
	$C\times [\varphi]$ is described as follows. This restriction maps 
	$$\mb C\longrightarrow \mb C[\varphi]\otimes \mc O_C\longrightarrow E^\vee\otimes F\,.$$
	The second map is precisely the global section corresponding to the map $\varphi$.
	For a point $c\in C$, the map 
	\eqref{canonical map of sheaves C times V} restricted to $(c,[\varphi])$
	is adjoint to the map $E\vert_c\stackrel{\varphi\vert_c}{\longrightarrow} F\vert_c$.
	As $[\varphi]\in V$, it follows that the map 
	$E\vert_c\stackrel{\varphi\vert_c}{\longrightarrow} F\vert_c$
	is nonzero, and so it follows that the restriction 
	of \eqref{canonical map of sheaves C times V} 
	to $(c,[\varphi])$ is nonzero, that is, 
	$E\vert_c\otimes F\vert_c^\vee\longrightarrow \mb C$
	is nonzero and hence surjective. This proves that 
	the map \eqref{canonical map of sheaves C times V}
	is surjective. This defines a map 
	$C\times V\stackrel{\kappa}{\longrightarrow} \mb P(E\otimes F^\vee)$
	which sits in a commutative diagram
	\[\xymatrix{
		C\times V\ar[r]^<<<<<\kappa\ar[dr] & \mb P(E\otimes F^\vee)\ar[d]^\pi\\
		& C
	}
	\]
	The restriction of the map $\kappa$ over a point $c\in C$ is the composite map below,
	where the second arrow is obtained using \eqref{ses-1}
	$$V\longrightarrow\mb P({\rm Hom}(E,F)^\vee)\setminus \mb P({\rm Hom}(E,F(-c))^\vee)
		\longrightarrow \mb P({\rm Hom}(E\vert_c,F\vert_c)^\vee)\,.$$
	The second arrow is a surjective flat map and the first arrow is an open immersion. 
	It follows that the composite is a flat map and hence 
	has constant fiber dimension. It follows that the map $\kappa$ has constant fiber dimension,
	and so using \cite[Theorem 23.1]{Mat} or
	\cite[\href{https://stacks.math.columbia.edu/tag/00R4}{Tag 00R4}]{Stk}
	we see that $\kappa$ is a flat map. Consider the canonical map 
	$$\pi^*E\longrightarrow \pi^*F\otimes \mc O_{\mb P(E\otimes F^\vee)}(1)$$
	on $\mb P(E\otimes F^\vee)$
	and let $Z$ denote the support of the cokernel. The set 
	$Z\cap \pi^{-1}(c)$ is precisely the locus of non-surjective 
	maps in $\mb P(E\vert_c\otimes F\vert_c^\vee)$. By \cite[Chapter II, \S2, page 67]{ACGH1}
	we have that the codimension of $Z\cap \pi^{-1}(c)$ in $\mb P(E\vert_c\otimes F\vert_c^\vee)$
	is $r-k+1$. It follows that the codimension of $Z$
	in $\mb P(E\otimes F^\vee)$ is $r-k+1$. It follows that 
	the codimension of $\kappa^{-1}(Z)$ in $C\times V$ is $r-k+1$
	and the codimension of $p_V(\kappa^{-1}(Z))$ in $V$ is at least $r-k\geqslant 2$.
	The set $V\setminus p_V(\kappa^{-1}(Z))$ is precisely the locus 
	of points in $\mb P({\rm Hom}(E,F)^\vee)$ corresponding to 
	maps which are surjective. The locus 
	of points in $\mb P({\rm Hom}(E,F)^\vee)$ corresponding to
	non-surjective maps $E\longrightarrow F$ is
	the set $T\bigcup p_V(\kappa^{-1}(Z))$, which has codimension at least 2. This proves that 
	${\rm dim}(\Theta^{-1}(x))-{\rm dim}(\Theta^{-1}(x)\setminus\mb U^s)\geqslant 2$,
	which completes the proof of the Lemma.
\end{proof}

\begin{remark}\label{remark k=1}
	The proof of Lemma \ref{Picard group of U^s} also shows the following. 
	Let $k=1$ and $r\geqslant 3$ so that $k\leqslant r-2$. 
	Let $d\geqslant {\rm max}\{\alpha(E,1),\mu_0(E,1)+1\}$. 
	Let $L$ be a line bundle on $C$ of degree $d$. Then the closed subset 
	in $\mb P({\rm Hom}(E,L)^\vee)$ consisting of non-surjective 
	maps has codimension $\geqslant 2$. 
\end{remark}

\begin{theorem}\label{picard group of Quot}
	Let $r-k\geqslant2$. Assume one of the following two holds
	\begin{itemize}
		\item $k\geqslant 2$ and $g(C)\geqslant 3$, or
		\item $k\geqslant 3$ and $g(C)=2$.
	\end{itemize}
	Let $n_1$ be the smallest integer such that $kn_1>g(C)+3$.
	Assume 
	$$d\geqslant {\rm max}\{\alpha(E,k)+1,k\mu_0(E,k)+k,\beta(E,k,g(C)+3)+n_1 \}\,.$$
	Then  
	$${\rm Pic}(\mc Q)\cong {\rm Pic}(M^s_{k,d+kn})\times \Z \cong 
		{\rm Pic}({\rm Pic}^0(C))\times \Z \times \Z\,.$$
\end{theorem}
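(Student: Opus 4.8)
The plan is to reduce the computation to the stable locus $\mc Q^s$ and then transport it to the moduli space $M^s_{k,d+kn}$ through the Cartesian diagram \eqref{diagram of principal bundles}. First note that the numerical hypothesis on $d$ contains both $d\geqslant\beta(E,k,g(C)+3)+n_1$ and $d\geqslant\max\{\alpha(E,k)+1,\,k\mu_0(E,k),\,\beta(E,k,1)\}$. By Theorem \ref{det flat}(2), $\mc Q$ is locally factorial, so ${\rm Pic}(\mc Q)\cong CH^1(\mc Q)$; by Lemma \ref{pic of Q and Q^s}, restriction gives an isomorphism $CH^1(\mc Q)\xrightarrow{\sim}CH^1(\mc Q^s)$. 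Since $\mc Q^s\subset\mc Q^{\rm tf}_{\rm g}$ lies in the smooth locus of $\mc Q$, we have $CH^1(\mc Q^s)={\rm Pic}(\mc Q^s)$. Thus it suffices to prove ${\rm Pic}(\mc Q^s)\cong{\rm Pic}(M^s_{k,d+kn})\oplus\Z$; granting this, the second isomorphism in the statement is the classical description ${\rm Pic}(M^s_{k,d+kn})\cong{\rm Pic}({\rm Pic}^0(C))\times\Z$ of the Picard group of the moduli space of stable bundles of rank $k\geqslant2$ on a curve of genus $\geqslant2$ (the first factor being the pullback of ${\rm Pic}({\rm Pic}^{d+kn}(C))$ along the determinant morphism, the remaining $\Z$ generated by a theta-type ample class).

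For the key isomorphism I would use diagram \eqref{diagram of principal bundles}, in which $\psi$ is a principal ${\rm PGL}(N)$-bundle and the square is Cartesian; hence $\Psi\colon\mb U^s\to\mc Q^s$, being the base change of $\psi$ along $\theta$, is also a principal ${\rm PGL}(N)$-bundle. By descent along the torsors $\Psi$ and $\psi$, pullback identifies ${\rm Pic}(\mc Q^s)$ with the group ${\rm Pic}^{{\rm PGL}(N)}(\mb U^s)$ of ${\rm PGL}(N)$-linearized line bundles on $\mb U^s$, and ${\rm Pic}(M^s_{k,d+kn})$ with ${\rm Pic}^{{\rm PGL}(N)}(R^s)$. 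Now $\Theta_{\mb U^s}$ is the restriction over $R^s$ of the projective bundle $\Theta\colon\mb P\to R'$, and the computation in the proof of Lemma \ref{Picard group of U^s} (where $r-k\geqslant2$ enters) shows that the ${\rm PGL}(N)$-invariant closed subset $\mb P^s\setminus\mb U^s$ has codimension $\geqslant2$ in $\mb P^s$; hence restriction gives an isomorphism ${\rm Pic}^{{\rm PGL}(N)}(\mb P^s)\xrightarrow{\sim}{\rm Pic}^{{\rm PGL}(N)}(\mb U^s)$. The sheaf $p_{2*}(p_1^*E^\vee\otimes\mc G)^\vee$ carries a natural ${\rm GL}(N)$-equivariant structure on $R'$ (from the ${\rm GL}(N)$-action on the universal quotient, trivial on $C$), so $\mb P^s\to R^s$ is a ${\rm GL}(N)$-equivariant projective bundle and the equivariant projective bundle formula gives ${\rm Pic}^{{\rm GL}(N)}(\mb P^s)\cong{\rm Pic}^{{\rm GL}(N)}(R^s)\oplus\Z\cdot\mc O(1)$. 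Passing from ${\rm GL}(N)$- to ${\rm PGL}(N)$-linearizations means demanding that the central $\mb G_m\subset{\rm GL}(N)$ act trivially: it acts on $\mc O(1)$ through the character of weight $-1$ and on each $L\in{\rm Pic}^{{\rm GL}(N)}(R^s)$ through some weight $w(L)$, so $L\mapsto\Theta^*L\otimes\mc O(1)^{\otimes w(L)}$ identifies ${\rm Pic}^{{\rm PGL}(N)}(\mb P^s)$ with ${\rm Pic}^{{\rm GL}(N)}(R^s)$, while $w\colon{\rm Pic}^{{\rm GL}(N)}(R^s)\to\Z$ has kernel ${\rm Pic}^{{\rm PGL}(N)}(R^s)$ and nonzero (hence finite-index, hence free) image, the latter witnessed by determinant-of-cohomology line bundles $\det Rp_{2*}(\mc G'\otimes p_1^*V)$ for appropriate sheaves $V$ on $C$. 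Consequently ${\rm Pic}^{{\rm GL}(N)}(R^s)\cong{\rm Pic}^{{\rm PGL}(N)}(R^s)\oplus\Z$, and chaining the isomorphisms yields ${\rm Pic}(\mc Q^s)\cong{\rm Pic}(M^s_{k,d+kn})\oplus\Z$.

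The step that I expect to demand genuine care is this linearization bookkeeping: one must verify that ${\rm Pic}^{{\rm PGL}(N)}$ is recovered from ${\rm Pic}^{{\rm GL}(N)}$ by the central-weight exact sequence, that this sequence splits, and that no torsion is introduced --- equivalently, that the extra $\Z$ summand is generated by an honest line bundle on $\mc Q^s$ that is not pulled back from $M^s_{k,d+kn}$, which is arranged through the determinant-of-cohomology construction together with the normalizations of $n$ and $N$ fixed in \S\ref{recall-Bhosle}. All the other ingredients --- local factoriality, the reduction to $\mc Q^s$, the codimension estimate on $\mb P^s\setminus\mb U^s$, descent along the two ${\rm PGL}(N)$-torsors of \eqref{diagram of principal bundles}, and the Picard group of $M^s_{k,d+kn}$ --- are already available or classical, so the proof is essentially their concatenation.
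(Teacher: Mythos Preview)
Your proposal is correct and follows essentially the same architecture as the paper: reduce to $\mc Q^s$ via local factoriality and Lemma~\ref{pic of Q and Q^s}, then use the Cartesian diagram~\eqref{diagram of principal bundles}, the codimension estimate of Lemma~\ref{Picard group of U^s}, descent along the ${\rm PGL}(N)$-torsors, and finally \cite{DN} for ${\rm Pic}(M^s_{k,d+kn})$. The only difference is in how the extra $\Z$ is extracted: the paper defines a restriction map ${\rm res}\colon{\rm Pic}(\mc Q^s)\to{\rm Pic}(\theta^{-1}([F]))\cong\Z$ to a fiber of $\theta$, shows it is nonzero using a very ample class, and identifies its kernel with ${\rm Pic}(M^s_{k,d+kn})$ by a hands-on descent (pull back to $\mb U^s$, extend to $\mb P^s$, write as $\mc O(n)\otimes\Theta^*L''$, argue $n=0$, descend $L''$); your version packages the same computation via the central-weight character on ${\rm Pic}^{{\rm GL}(N)}$ and the equivariant projective bundle formula. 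Your framing is a bit more systematic but requires exactly the extra bookkeeping you flag (that the weight of $\mc O(1)$ is $-1$ and that the weight map on ${\rm Pic}^{{\rm GL}(N)}(R^s)$ has nonzero image), whereas the paper sidesteps this by working directly with the fiberwise degree and an ample class on $\mc Q$.
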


\begin{proof}
	We saw in Theorem \ref{det flat} that $\mc Q$ is an integral variety 
	which is normal and locally factorial.
	So the Picard group is isomorphic to the divisor class group. 
	By Lemma \ref{pic of Q and Q^s} it is enough to show that 
	$${\rm Pic}(\mc Q^s)\cong {\rm Pic}(M^s_{k,d+kn})\times \Z\,.$$   
	Recall that we have the following diagram 
	\eqref{diagram of principal bundles}, which we checked is Cartesian:
	\begin{equation*}
	\xymatrix{
		\mb U^s\ar[rr]^\Psi\ar[d]_{\Theta_{\mb U^s}} && \mc Q^s\ar[d]^{\theta}\\
		R^s\ar[rr]^{\psi} && M^s_{k,d+kn}\,.
	}
	\end{equation*}
	Recall from \S\ref{recall-Bhosle} that we had fixed a closed point $P\in C$.
	Note that for any $[x:\mc O^N_C\longrightarrow F(nP)]\in R^s$, the fibre 
	$\Theta^{-1}_{\mb U^s}(x)\cong \theta^{-1}([F])$. In the proof of Lemma 
	\ref{Picard group of U^s} we proved that ${\rm dim}(\Theta^{-1}(x))-{\rm dim}(\Theta^{-1}(x)\setminus\mb U^s)\geqslant 2$
	for every closed point $x\in R^s$. It follows that
	$\Theta^{-1}_{\mb U^s}(x)=\Theta^{-1}(x)\cap \mb U^s$ is an open subset 
	of projective space (that is, $\Theta^{-1}(x)$)
	whose complement has codimension $\geqslant 2$. Thus,
        $$\mb Z={\rm Pic}(\Theta^{-1}(x))={\rm Pic}(\Theta^{-1}_{\mb U^s}(x))=
        	{\rm Pic}(\theta^{-1}([F]))\,.$$
        Therefore we have the restriction map 
        $${\rm res}: {\rm Pic}(\mc Q)\cong {\rm Pic}(\mc Q^s)\longrightarrow {\rm Pic}(\theta^{-1}([F]))
		\cong \mb Z\,.$$
	We claim this map is nontrivial. Let $\mc L$ be a very ample
	line bundle on $\mc Q$. If ${\rm res}(\mc L)$ were trivial, 
	it would follow that ${\rm res}(\mc L)$ is 
	trivial and very ample, which is a 
	contradiction as $\theta^{-1}([F])\cong \Theta^{-1}(x)$ 
	is an open subset of a projective
	space whose complement has codimension $\geqslant 2$. 
	Thus, the image of ${\rm res}$ is isomorphic to a copy of 
	$\mb Z$. We will show that  
	the kernel of ${\rm res}$ is isomorphic to ${\rm Pic}(M^s_{k,d+kn})$.
    
	Let $L\in {\rm Pic}(\mc Q^s)$ be such that ${\rm res}(L)$ 
	is trivial. We need to show that $L$ is 
	isomorphic to the pullback of some line bundle on 
	${\rm Pic}(M^s_{k,d+kn})$. Consider the pullback $\Psi^{*}L$. 
	Since $\Psi$ is ${\rm PGL}(N)$-invariant, this line bundle carries a 
	${\rm PGL}(N)$-linearization. By Lemma \ref{Picard group of U^s}, 
	the complement of $\mb U^s$ in $\mb P^s$ has 
	codimension $\geqslant 2$. 
	Therefore, both $L$ and this ${\rm PGL}(N)$-linearization 
	extend uniquely to $\mb P^s$. Let us denote this 
	extension of $\Psi^*L$ to $\mb P^s$ by $L'$ and 
	the linearization on ${\rm PGL}(N)\times \mb P^s$ by 
	$\alpha':m^*_{\mb P^s}L'\longrightarrow p_{\mb P^s}^*L$, 
	where $m_{\mb P^s}:{\rm PGL}(N)\times \mb P^s\longrightarrow \mb P^s$ 
	is the multiplication map and 
	$p_{\mb P^s}:{\rm PGL}(N)\times \mb P^s\longrightarrow \mb P^s$ 
	is the second projection. Since 
	$\Theta:\mb P^s\longrightarrow R^s$ is a 
	projective bundle,  $L'\cong \mc O(n)\otimes \Theta^*L''$ 
	for some $L''\in {\rm Pic}(R^s)$ and for some $n$. However, since the fibers 
	of $\Theta$ and $\theta$ are isomorphic, 
	the condition ${\rm res}(L)$ is trivial implies that 
	$n=0$, that is, $L'\cong \Theta^*L''$. Now note that 
	since the map $\mb P^s\longrightarrow R^s$ is 
	${\rm PGL}(N)$-equivariant we have a commutative diagram
	\[
	\begin{tikzcd}    
		{\rm PGL}(N)\times \mb P^s \ar[r,"m_{\mb P^s}"] \ar[d,"{\rm Id}\times \Theta"] & 
			\mb P^s \ar[d,"\Theta"] \\
		{\rm PGL}(N)\times R^s      \ar[r,"m_{R^s}"]            & R^s   
	\end{tikzcd}
	\]
	From this diagram it follows that we have an isomorphism of sheaves
	$$({\rm Id}\times \Theta)^*m^*_{R^s}L''\cong 
		m^*_{\mb P^s}\Theta^*L''\stackrel{\sim}{\longrightarrow} p_{\mb P^s}^* \Theta^*L''
		\cong ({\rm Id} \times \Theta)^*p_{R^s}^*L''\,.$$
	where the middle isomorphism is given by the linearization $\alpha'$. Since 
	${\rm Id}\times \Theta$ is a projective bundle, applying 
	$({\rm Id}\times \Theta)_*$ to this composition of isomorphisms 
	we get a linearization 
	$$\alpha'':m^*_{R^s}L''\stackrel{\sim}{\longrightarrow} p_{R^s}^*L''$$ 
	of $L''$
	such that $({\rm Id}\times \Theta)^*\alpha''=\alpha'$. Now recall that 
	the map $\psi$ is a principal ${\rm PGL}(N)$-bundle. By \cite[Theorem 4.2.14]{HL} 
	we get that there exists $L'''\in {\rm Pic}(M^s_{k,d+kn})$ such that 
	$\psi^*L'''\cong L''$ and the induced ${\rm PGL}(N)$ linearization is $\alpha''$. Therefore we get that 
	$$\Psi^* \theta^* L''' \cong \Theta^* \psi^*L''' \cong \Theta^* L''\cong L'\cong \Psi^*L$$
	and also the induced ${\rm PGL}(N)$-linearizations are also the same. 
	Since the diagram (\ref{diagram of principal bundles}) is Cartesian, the map 
	$\Psi$ is a principal ${\rm PGL}(N)$-bundle. Hence by \cite[Theorem 4.2.16]{HL} we get that 
	$\theta^*L'''\cong L$. This completes the proof of the first equality in the statement 
	of the Theorem. 
	The second equality follows from \cite[Theorem A, Theorem C]{DN} and from the 
	fact that 
	$${\rm dim}(M_{k,d+kn})-{\rm dim}(M_{k,d+kn}\setminus M^s_{k,d+kn})\geqslant 2\,.$$
	One way to see this inequality is to apply \cite[Proposition 1.2 (3)]{Bhosle} and 
	Lemma \ref{fiber-dimension-1} to the GIT quotient $R^{ss} \to M_{k,d+kn}$.
\end{proof}

\section{Fibers of ${\rm det}$}

Let $L$ be a line bundle on $C$ of degree $d$
and let $\mc Q_L$ denote the scheme theoretic fiber 
${\rm det}^{-1}(L)$. 
As a corollary of Theorem \ref{det flat}
we have the following Proposition.
\begin{proposition}\label{Q_L lci}
	Let $n_1$ be the smallest integer such that $kn_1>g(C)+3$.
	Let $d\geqslant \beta(E,k,g(C)+3)+n_1$.  
	Then $\mc Q_L$ is a local complete
	intersection scheme which is equidimensional, normal and 
	locally factorial.
\end{proposition}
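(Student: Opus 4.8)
Since $\beta(E,k,\cdot)$ is non-decreasing in its last argument and $n_1\geqslant n_0$ (the integer $n_0$ of Theorem~\ref{det flat}(1); indeed $g(C)+3>g(C)+1$), the hypothesis $d\geqslant\beta(E,k,g(C)+3)+n_1$ implies both hypotheses of Theorem~\ref{det flat}. So ${\rm det}:\mc Q\longrightarrow{\rm Pic}^d(C)$ is flat, and $\mc Q$ is an integral, normal, locally factorial, local complete intersection variety; by Remark~\ref{dimension quot}(2) it has dimension $rd-ek-k(r-k)(g-1)$. The plan is to transport each of these properties to $\mc Q_L={\rm det}^{-1}([L])$, using flatness of ${\rm det}$, smoothness of its target, and the codimension estimate of Lemma~\ref{compact lemma codimension}.

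First I would show $\mc Q_L$ is a local complete intersection scheme (hence Cohen--Macaulay) and equidimensional. Since $\mc Q$ is a local complete intersection over $\C$ and ${\rm Pic}^d(C)$ is regular, ${\rm det}$ is a local complete intersection morphism (see \cite{Mat}, \cite{Stk}): working locally, $\mc Q$ is cut out in a smooth variety by a regular sequence, and $\mc Q_L$ is cut out there by that sequence together with the pullback under ${\rm det}$ of a regular system of parameters of ${\rm Pic}^d(C)$ at $[L]$; the concatenation is again a regular sequence because ${\rm det}$ is flat. Hence $\mc Q_L$ is a local complete intersection over $\C$, in particular Cohen--Macaulay. (Alternatively one may rerun the deformation-theoretic argument of Lemma~\ref{local complete intersection} with the determinant fixed, using Lemma~\ref{description differential} to identify the tangent spaces.) For equidimensionality: ${\rm det}$ is proper (as $\mc Q$ is projective over $\C$) and dominant (as ${\rm det}\vert_{\mc Q_{\rm g}}$ is smooth with nonempty source), hence surjective, so $\mc Q_L$ is nonempty; and by flatness of ${\rm det}$, with $\mc Q$ irreducible and ${\rm Pic}^d(C)$ smooth irreducible of dimension $g(C)$, every point of $\mc Q_L$ lies on a component of dimension ${\rm dim}(\mc Q)-g(C)$. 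Since $\mc Q_L$ is Cohen--Macaulay, its connected components are equidimensional, so $\mc Q_L$ is equidimensional of dimension ${\rm dim}(\mc Q)-g(C)$.

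Next I would establish normality and local factoriality, following the pattern of Theorem~\ref{det flat}. Recall from the proof of Theorem~\ref{det flat} that ${\rm det}\vert_{\mc Q_{\rm g}}$ is a smooth morphism; hence the fibre $\mc Q_{\rm g}\cap\mc Q_L$ is smooth over $\C$, and the singular locus of $\mc Q_L$ is contained in $\mc Q_{\rm b}\cap\mc Q_L$. Applying Lemma~\ref{compact lemma codimension} with $t_0=3$ --- this is exactly where the hypothesis $d\geqslant\beta(E,k,g(C)+3)+n_1$ enters, with $n_1$ in the role of $i_0$ --- gives ${\rm dim}(\mc Q)-{\rm dim}(\mc Q_{\rm b})>g(C)+3$, i.e. ${\rm dim}(\mc Q_{\rm b})\leqslant{\rm dim}(\mc Q)-g(C)-4$. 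Since $\mc Q_{\rm b}\cap\mc Q_L$ is closed in $\mc Q_{\rm b}$ and ${\rm dim}(\mc Q_L)={\rm dim}(\mc Q)-g(C)$, it follows that
$${\rm dim}(\mc Q_L)-{\rm dim}(\mc Q_{\rm b}\cap\mc Q_L)\geqslant 4$$
(one may also invoke Lemma~\ref{fiber-dimension}(2) for ${\rm det}$ with $U=\mc Q_{\rm g}$, $Z=\mc Q_{\rm b}$, $t_0=3$). Thus the singular locus of $\mc Q_L$ has codimension $\geqslant 4$. As $\mc Q_L$ is Cohen--Macaulay it satisfies $S_2$, and the codimension bound gives $R_1$, so by Serre's criterion $\mc Q_L$ is normal. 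Finally, $\mc Q_L$ is a local complete intersection whose singular locus has codimension $\geqslant 4$, so by Grothendieck's theorem (\cite{sga2}, \cite{Call}, \cite[Theorem~1.4]{HT}) every local ring of $\mc Q_L$ is a unique factorization domain, whence $\mc Q_L$ is locally factorial.

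The step I expect to be the main obstacle is the first one: checking that the local complete intersection property really passes to the fibre $\mc Q_L$, and not merely Cohen--Macaulayness. This rests on flatness of ${\rm det}$ together with regularity of ${\rm Pic}^d(C)$ (the ``relative lci'' observation above), or on a careful fixed-determinant version of the deformation computation behind Lemma~\ref{local complete intersection}. The second quantitative input --- that $\mc Q_{\rm b}$ meets $\mc Q_L$ in codimension $\geqslant 4$, not just $\geqslant 2$ --- is precisely why one feeds $t_0=3$ into Lemma~\ref{compact lemma codimension}. With both in hand, the remaining assertions are a routine recombination of what is already known about $\mc Q$ and ${\rm det}$.
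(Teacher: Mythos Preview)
Your proof is correct and follows essentially the same route as the paper's: use Theorem~\ref{det flat} to get flatness of ${\rm det}$ and the local complete intersection property of $\mc Q$, pass the lci property to the fibre via flatness over a regular base, use Lemma~\ref{compact lemma codimension} with $t_0=3$ together with Lemma~\ref{fiber-dimension}(2) to bound the singular locus of $\mc Q_L$ in codimension $\geqslant 4$, and finish with Serre's criterion and Grothendieck's theorem. The only cosmetic difference is that where you spell out the regular-sequence argument for ``lci passes to flat fibres over a regular base,'' the paper simply cites \cite[(1.9.2)]{Av} (with \cite{Bruns-Herzog} and \cite{Stk} as alternatives).
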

\begin{proof}
	We use Theorem \ref{det flat} and Lemma \ref{local complete intersection}.
	As $\mc Q$ is a local complete intersection scheme, ${\rm Pic}^d(C)$
	is smooth and the map ${\rm det}$ is flat, it follows 
	using \cite[(1.9.2)]{Av} (see also \cite[Remark 2.3.5]{Bruns-Herzog}
	and \cite[\href{https://stacks.math.columbia.edu/tag/09Q2}{Tag 09Q2}]{Stk}) that $\mc Q_L$
	is a local complete intersection scheme and so also Cohen-Macaulay.
	As $\mc Q$ is irreducible, flatness of ${\rm det}$
	also implies that $\mc Q_L$ is equidimensional.

	We observed in the proof of Theorem \ref{det flat} 
	that the restriction of ${\rm det}$
	to the open subset $\mc Q_{\rm g}$ is a smooth morphism. 
	It follows that $\mc Q_L \cap \mc Q_{\rm g}$ is contained 
	in the smooth locus of $\mc Q_L$. The singular locus 
	of $\mc Q_L$ is thus contained in $\mc Q_L \cap \mc Q_{\rm b}$.
	As $d\geqslant \beta(E,k,g(C)+3)+n_1$,
	applying Lemma \ref{compact lemma codimension} we get
	$${\rm dim}(\mc Q)-{\rm dim}(\mc Q_{\rm b})>g(C)+3\,.$$
	By Lemma \ref{fiber-dimension}(2) it follows that 
	\begin{equation}\label{codimenion Q_{L,b}}
		{\rm dim}(\mc Q_L)-{\rm dim}(\mc Q_L \cap \mc Q_{\rm b})>3\,.
	\end{equation}
	It follows that the singular locus of $\mc Q_L$ has 
	codimension 4 or more. This proves that $\mc Q_L$ 
	is normal, that is, it is the disjoint union of finitely many 
	normal varieties, all of the same dimension. Using Grothendieck's theorem
	(see \cite[Theorem 1.4]{HT}) 
	it follows that $\mc Q_L$ is locally factorial.
\end{proof}

Next we want to find conditions under which $\mc Q_L$ 
becomes irreducible. We use the notation 
used in Lemma \ref{lemma 2 codim}.
In the proof of the next Lemma we will use the following fact.
Let $X\longrightarrow S$ be a projective morphism of schemes
with relative ample line bundle $\mc O(1)$. 
Let $\ms S$ be a coherent sheaf on $X$. Let $P(n)$ denote 
the constant polynomial defined by $P(n)=1$ for all $n$.
Then the relative Quot scheme ${\rm Quot}_{X/S}(\ms S,P)$ is isomorphic 
to $\mb P(\ms S)\longrightarrow X$.
\begin{lemma}\label{density Q^tf_L}
	Let $n_0$ be the smallest integer such that $kn_0>g(C)+1$.
	Let $n_1$ be the smallest integer such that $kn_1>g(C)+3$.
	Let 
	$$d\geqslant {\rm max}\{\beta(E,k,g(C)+1)+n_0+1, \beta(E,k,g(C)+3)+n_1\}\,.$$
	Then $\mc Q^{\rm tf}_L$ is dense in $\mc Q_L$.
\end{lemma}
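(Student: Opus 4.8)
The plan is to reduce the statement to a single dimension estimate on a projective bundle. Since torsion-freeness is an open condition, $\mc Q^{\rm tf}_L$ is open in $\mc Q_L$, and by Proposition~\ref{Q_L lci} (applicable since $d\geqslant\beta(E,k,g(C)+3)+n_1$) the fibre $\mc Q_L$ is equidimensional of dimension $\dim\mc Q-g(C)$. Hence it is enough to prove
\[
\dim\bigl(\mc Q_L\setminus\mc Q^{\rm tf}_L\bigr)\ <\ \dim\mc Q-g(C),
\]
which then forces $\mc Q^{\rm tf}_L$ to meet, and therefore be dense in, every irreducible component of $\mc Q_L$.

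First I would bring in the relevant auxiliary space. Put $A:={\rm Quot}_{C/\C}(E,k,d-1)$, let $\ms S$ be the universal kernel on $C\times A$, and recall from the proof of Lemma~\ref{lemma 2 codim} (the case $i=1$) the relative Quot scheme $W:={\rm Quot}_{C\times A/A}(\ms S,0,1)$ and the morphism $\pi'\colon W\to\mc Q$ whose image is exactly the non-torsion-free locus $\mc Q\setminus\mc Q^{\rm tf}$ (which equals $\bar Z_1$). By the fact recalled just before the statement, $W\cong\mb P_{C\times A}(\ms S)$ is a $\mb P^{r-k-1}$-bundle over $C\times A$ via a map $q\colon W\to C\times A$. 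Unwinding the pushout description of $\pi'$, a point of $W$ lying over $\bigl(x,[E\twoheadrightarrow F']\bigr)\in C\times A$ is carried by $\pi'$ to a quotient $[E\twoheadrightarrow F]$ whose torsion is the length-one skyscraper at $x$, so that $\det F\cong\mc O_C(x)\otimes\det F'$. Therefore ${\rm det}\circ\pi'$ factors as
\[
W\xrightarrow{\ q\ }C\times A\xrightarrow{\ \mathrm{AJ}\times{\rm det}_A\ }{\rm Pic}^1(C)\times{\rm Pic}^{d-1}(C)\xrightarrow{\ (M,N)\,\mapsto\,M\otimes N\ }{\rm Pic}^d(C),
\]
where $\mathrm{AJ}\colon C\to{\rm Pic}^1(C)$ sends $x$ to $\mc O_C(x)$ and ${\rm det}_A\colon A\to{\rm Pic}^{d-1}(C)$ is the determinant map of $A$.

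Next comes the dimension count. Since $\mc Q_L\setminus\mc Q^{\rm tf}_L=\pi'\bigl(({\rm det}\circ\pi')^{-1}(L)\bigr)$, it suffices to bound $\dim({\rm det}\circ\pi')^{-1}(L)=\dim q^{-1}(Y_L)$ where $Y_L:=\{(x,a)\in C\times A:\mc O_C(x)\otimes{\rm det}_A(a)=L\}$. This is precisely where the hypothesis $d\geqslant\beta(E,k,g(C)+1)+n_0+1$ is used: it yields $d-1\geqslant\beta(E,k,g(C)+1)+n_0$, so by Theorem~\ref{det flat}(1) the map ${\rm det}_A$ is flat, and being also proper and dominant it is surjective with all fibres of pure dimension $\dim A-g(C)$. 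Projecting $Y_L$ to $C$ then gives $\dim Y_L\leqslant 1+\dim A-g(C)$, and pulling back along the $\mb P^{r-k-1}$-bundle $q$ gives $\dim q^{-1}(Y_L)\leqslant\dim A+(r-k)-g(C)$. By Remark~\ref{dimension quot}(2), $\dim A=r(d-1)-ek-k(r-k)(g-1)$, so $\dim A+(r-k)=\dim\mc Q-k$; hence $\dim(\mc Q_L\setminus\mc Q^{\rm tf}_L)\leqslant\dim\mc Q-k-g(C)<\dim\mc Q-g(C)$, since $k\geqslant 1$. This is the desired inequality.

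The one genuinely delicate point is the flatness used above. The restriction of ${\rm det}$ to the non-torsion-free locus $\bar Z_1$ is not obviously flat or even equidimensional, so one cannot control the fibre over $L$ there directly; the device is to pass to the $\mb P^{r-k-1}$-bundle $W=\mb P_{C\times A}(\ms S)$ that surjects onto it, on which ${\rm det}\circ\pi'$ is assembled out of ${\rm det}_A$, and then to use that ${\rm det}_A$ — the determinant map of the \emph{smaller} Quot scheme ${\rm Quot}_{C/\C}(E,k,d-1)$ — is flat by Theorem~\ref{det flat}. The numerical hypothesis on $d$ is calibrated exactly so that Theorem~\ref{det flat} applies to this smaller Quot scheme. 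Everything else — the pushout description of $\pi'$, the identification of relative Quot schemes of length-one quotients with projective bundles, and the dimension formula of Remark~\ref{dimension quot} — is routine bookkeeping.
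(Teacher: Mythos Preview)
Your argument is correct and follows essentially the same route as the paper: pass to the relative Quot $W=\mb P_{C\times A}(\ms S)$, factor ${\rm det}\circ\pi'$ through $C\times A$ via $(x,a)\mapsto\mc O_C(x)\otimes{\rm det}_A(a)$, invoke flatness of ${\rm det}_A$ on the smaller Quot scheme $A$ (this is exactly where the hypothesis $d-1\geqslant\beta(E,k,g(C)+1)+n_0$ enters), and bound the dimension of the fibre over $L$. One minor imprecision: the claim that the image $F$ of a point of $W$ lying over $(x,[E\twoheadrightarrow F'])$ has torsion \emph{equal} to the length-one skyscraper at $x$ is only correct when $F'$ is already torsion-free; in general ${\rm Tor}(F)$ has length $\geqslant 1$. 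But the determinant identity $\det F\cong\mc O_C(x)\otimes\det F'$ holds regardless (compute via kernels), so your factorisation of ${\rm det}\circ\pi'$ is unaffected.

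The one substantive difference from the paper is that the paper works a bit harder: it checks that $W$ is integral and Cohen--Macaulay and that each of the two factor maps $W\to C\times{\rm Pic}^{d-1}(C)\to{\rm Pic}^d(C)$ has constant fibre dimension, thereby showing the composite ${\rm det}_B\colon W\to{\rm Pic}^d(C)$ is \emph{flat}. Combined with the generic injectivity of $\pi'$ on an open set meeting every fibre $W_L$, this yields the \emph{equality} $\dim\bar Z_{1,L}=\dim\mc Q-k-g(C)$, not just your upper bound. Your inequality is enough for the density statement here, but the exact equality is recorded as \eqref{codim of Q^{tf}} and is used again later (in particular in the $k=1$ argument of Theorem~\ref{k=1}), so if you continue to those results you would need to upgrade your bound at that point.
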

\begin{proof}
	Recall the relative Quot scheme in equation \eqref{rel-quot}. 
	We are interested in the case $i=1$, that is, the relative 
	Quot scheme ${\rm Quot}_{C\times A/A}(\ms S,0,1)$, 
	where $A$ is the Quot scheme ${\rm Quot}_{C/\C}(E,k,d-1)$.
	For ease of notation we denote by $B$ the scheme ${\rm Quot}_{C\times A/A}(\ms S,0,1)$. 
	Recall the map $\pi:B\longrightarrow A$ from \eqref{rel-quot}. On $C\times B$
	we have a quotient 
	\begin{equation}\label{univ-quot-B}
		({\rm Id_C}\times \pi)^*\ms S\longrightarrow \ms T\,,
	\end{equation}
	such that $\ms T$ is flat over $B$. 
	Using $\ms T$ we get the determinant map
	$${\rm det}_B:B\longrightarrow {\rm Pic}^1(C)\,.$$
	This map has the following pointwise description. 
	A closed point $b\in B$ gives rise to the closed 
	point $\pi(b)\in A$, which corresponds to a short exact
	sequence on $C$
	$$0\longrightarrow S_F\longrightarrow E\longrightarrow F\longrightarrow 0\,,$$ where $F$ 
	is of rank $k$ and degree $d-1$ on $C$. The restriction 
	of the universal quotient \eqref{univ-quot-B}
	to the point $b$ is a torsion quotient on $C$
	$$S_F\longrightarrow M\,,$$
	such that ${\rm length}(M)=1$. Let $c={\rm Supp}(M)$.
	Then ${\rm det}_B(b)=\mc O_C(c)$. Consider the natural embedding
	(recall that $g(C)>0$)
	$\iota: C\hookrightarrow {\rm Pic}^1(C)$ given by $c\mapsto \mc O_C(c)$.
	It is clear that the image of $B$ is the image of $\iota$.
	Next we want to show that $B$ is an integral scheme. 
	
	As $d-1\geqslant \alpha(E,k)$, it follows from Lemma \ref{local complete intersection}
	that $A$ is a local complete intersection. 
	By Theorem \ref{det flat}(1)
	it follows that $A$ is integral. As $i=1$, using the fact stated before 
	this Lemma, it is easily 
	checked that $B$ is the 
	projective bundle $\mb P(\ms S)\longrightarrow C\times A$. It follows that 
	$B$ is integral and a local complete intersection
	and so Cohen-Macaulay. 
	As $B$ is integral, the map ${\rm det}_B$
	factors through the map $\iota$, that is, we have a commutative
	diagram 
	\[\xymatrix{
		B\ar[rr]^{{\rm det}_B}\ar[dr]_{{\rm det}_T}&&{\rm Pic}^1(C)\\
		&C\ar[ru]^{\iota}
	}
	\]
	Let ${\rm det}_A:A\longrightarrow {\rm Pic}^{d-1}(C)$ denote the determinant
	map for the Quot scheme $A$. This is flat due to Theorem \ref{det flat}(1). Consider the map
	\[
	\xymatrix{
		B\ar[rrr]_<<<<<<<<<<<<<<{({\rm det}_T,\,{\rm det}_A\circ \pi)}
		\ar@/^2.0pc/@[red][rrrr]^{{\rm det}_B}&&& 
			C\times {\rm Pic}^{d-1}(C)\ar[r]& {\rm Pic}^d(C)\,.
	}
	\]
	The second map is given by $(c,M)\mapsto M\otimes \mc O_C(c)$.
	It is easily checked that both maps have constant fiber dimension. 
	In view of \cite[Theorem 23.1]{Mat} it follows that both maps 
	are flat and so the composite ${\rm det}_B$ is also flat.
	Recall the map $\pi'$ from \eqref{rel-quot to Q}.
	It is clear that we have a commutative diagram
	\[\xymatrix{
		B\ar[r]^{\pi'}\ar[dr]_{{\rm det}_B} & \mc Q\ar[d]^{{\rm det}}\\
		& {\rm Pic}^d(C)\,.
	}
	\]
	Recall the definition of $Z_1$, see \eqref{def Z_i}.
	We saw in the proof of Lemma \ref{lemma 2 codim} that 
	$\pi'(B)=\bar Z_1$. Let 
	$$\bar Z_{1,L}:=\{[q:E\longrightarrow F]\in \bar Z_{1}\,\vert\, {\rm det}(F)=L\}\,.$$
	Let $B_L:={\rm det}_B^{-1}(L)$ denote the scheme theoretic
	fiber over $L$. Then it is clear that $\pi'(B_L)=\bar Z_{1,L}$.
	Thus, it follows that ${\rm dim}(\bar Z_{1,L})\leqslant {\rm dim}(B_L)$.
	In the proof of Lemma \ref{lemma 2 codim} (after equation \eqref{lem2codime1}) 
	we had remarked that 
	there is an open set $U\subset B$ such that $\pi'$ is injective 
	on points of $U$. It is easily checked that this open set $U$ meets all fibers $B_L$. 
	Thus, $\pi'$ is also injective on the subset $U\cap B_L$.
	Thus, it follows that ${\rm dim}(\bar Z_{1,L})\geqslant {\rm dim}(U\cap B_L)$.
	Since ${\rm det}_B$ is flat, the fibers are equidimensional and so
	it follows that every open set of $B_L$ has the same dimension as $B_L$.
	Combining these we get 
	\begin{equation}\label{codim of Q^{tf}}	
	{\rm dim}(\bar Z_{1,L})={\rm dim}(B_L)={\rm dim}(\mc Q)-k-g
		={\rm dim}(\mc Q_L)-k\,.
	\end{equation}	
	As $k\geqslant 1$, and all irreducible components of $\mc Q_L$
	have the same dimension, it follows that 
	$\mc Q_L\setminus \bar Z_{1,L}=\mc Q^{\rm tf}_L$
	is dense in $\mc Q_L$.
\end{proof}

The above Lemma implies that irreducibility of $\mc Q_L$ is equivalent to 
the irreducibility of the open subset $\mc Q_L^{\rm tf}$.
Let 
$$\mc Q^{\rm tf}_{{\rm g},L}:=\mc Q^{\rm tf}_{\rm g}\cap \mc Q_L\,.$$
Combining Proposition \ref{Q_L lci} and Lemma \ref{density Q^tf_L} we get the 
following. 

\begin{lemma}\label{density  Q^tf_g,L}
	Let $n_0$ be the smallest integer such that $kn_0>g(C)+1$.
	Let $n_1$ be the smallest integer such that $kn_1>g(C)+3$.
	Let 
	$$d\geqslant {\rm max}\{\beta(E,k,g(C)+1)+n_0+1, \beta(E,k,g(C)+3)+n_1\}\,.$$
	Then $\mc Q^{\rm tf}_{{\rm g},L}$
	is dense in $\mc Q^{\rm tf}_L$.
\end{lemma}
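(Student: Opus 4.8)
The plan is to deduce this from Lemma \ref{density Q^tf_L} (density of $\mc Q^{\rm tf}_L$ in $\mc Q_L$), Proposition \ref{Q_L lci} (which in particular says $\mc Q_L$ is equidimensional) and a codimension estimate for the bad locus carried out along the flat map ${\rm det}$. First I would note the relevant open/closed structure: since torsion-freeness of the quotient is an open condition on $\mc Q$ and $\mc Q_{\rm g}$ is open in $\mc Q$ (upper semicontinuity of $h^1$ applied to the $\mc Q$-flat sheaf $p_1^*E^\vee\otimes\mc F$), both $\mc Q^{\rm tf}_L=\mc Q^{\rm tf}\cap\mc Q_L$ and $\mc Q^{\rm tf}_{{\rm g},L}=\mc Q^{\rm tf}_{\rm g}\cap\mc Q_L$ are open in $\mc Q_L$, and $\mc Q^{\rm tf}_{{\rm g},L}$ is the complement inside $\mc Q^{\rm tf}_L$ of the closed subset $\mc Q^{\rm tf}_L\cap\mc Q_{\rm b}$. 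By Lemma \ref{density Q^tf_L} (whose hypotheses match ours) $\mc Q^{\rm tf}_L$ is dense in $\mc Q_L$, and by Proposition \ref{Q_L lci} the scheme $\mc Q_L$ is equidimensional; since a dense open subset of a Noetherian scheme meets every irreducible component, $\mc Q^{\rm tf}_L$ is equidimensional with ${\rm dim}(\mc Q^{\rm tf}_L)={\rm dim}(\mc Q_L)$.

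The core of the argument is then to show ${\rm dim}(\mc Q^{\rm tf}_L\cap\mc Q_{\rm b})<{\rm dim}(\mc Q_L)$. If $\mc Q^{\rm tf}_L\cap\mc Q_{\rm b}$ is empty there is nothing to prove, so assume it is nonempty; then $\mc Q_L\cap\mc Q_{\rm b}$ is nonempty. Since $d\geqslant\beta(E,k,g(C)+1)+n_0$, Lemma \ref{compact lemma codimension} with its parameter equal to $1$ gives ${\rm dim}(\mc Q)-{\rm dim}(\mc Q_{\rm b})>g(C)+1$. Now apply Lemma \ref{fiber-dimension}(2) to ${\rm det}\colon\mc Q\to{\rm Pic}^d(C)$: this is a flat dominant morphism of integral schemes (by Theorem \ref{det flat}(1), $\mc Q$ is integral, and ${\rm Pic}^d(C)$ is integral), and taking $U=\mc Q_{\rm g}$, on which ${\rm det}$ has constant fibre dimension (the restriction of ${\rm det}$ to the irreducible open set $\mc Q_{\rm g}$ is smooth, as observed in the proof of Theorem \ref{det flat}), $Z=\mc Q_{\rm b}$, and the parameter $t_0=1$, we obtain, using ${\rm dim}({\rm Pic}^d(C))=g(C)$, that ${\rm dim}(\mc Q_L)-{\rm dim}(\mc Q_L\cap\mc Q_{\rm b})>1$.

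Combining the two estimates yields ${\rm dim}(\mc Q^{\rm tf}_L\cap\mc Q_{\rm b})\leqslant{\rm dim}(\mc Q_L\cap\mc Q_{\rm b})<{\rm dim}(\mc Q_L)={\rm dim}(\mc Q^{\rm tf}_L)$. Since $\mc Q^{\rm tf}_L$ is equidimensional, no irreducible component of it can be contained in the proper closed subset $\mc Q^{\rm tf}_L\cap\mc Q_{\rm b}$; hence its open complement $\mc Q^{\rm tf}_{{\rm g},L}$ has nonempty, and therefore dense, intersection with each component, so $\mc Q^{\rm tf}_{{\rm g},L}$ is dense in $\mc Q^{\rm tf}_L$, which is what we want. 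I do not expect a genuine obstacle here: the only thing requiring care is the numerical bookkeeping, namely checking that the single bound on $d$ in the statement simultaneously licenses Lemma \ref{density Q^tf_L}, Proposition \ref{Q_L lci}, Lemma \ref{compact lemma codimension} (with parameter $1$) and Lemma \ref{fiber-dimension}(2), together with handling the degenerate case in which $\mc Q_L$ does not meet $\mc Q_{\rm b}$.
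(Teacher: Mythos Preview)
Your argument is correct and follows essentially the same route as the paper. The paper's proof is slightly terser: having already established in the proof of Proposition \ref{Q_L lci} the inequality \eqref{codimenion Q_{L,b}}, namely ${\rm dim}(\mc Q_L)-{\rm dim}(\mc Q_L\cap\mc Q_{\rm b})>3$, it simply cites that (together with equidimensionality of $\mc Q_L$ and the identity $\mc Q^{\rm tf}_L\setminus\mc Q^{\rm tf}_{{\rm g},L}=\mc Q^{\rm tf}_L\cap\mc Q_{\rm b}$) rather than re-running Lemma \ref{compact lemma codimension} and Lemma \ref{fiber-dimension}(2) with the weaker parameter $t_0=1$ as you do; but the underlying mechanism is identical.
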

\begin{proof}
	As all components of $\mc Q_L$ have the same dimension,
	the same holds for the open subset $\mc Q^{\rm tf}_L$.
	Note that 
	$$\mc Q^{\rm tf}_L \setminus \mc Q^{\rm tf}_{{\rm g},L}=\mc Q^{\rm tf}_{L}\cap \mc Q_{\rm b}\,.$$
	The Lemma follows using \eqref{codimenion Q_{L,b}}.
\end{proof}

Combining the above results we have the following.

\begin{theorem}\label{Q_L is locally factorial}
	Let $k\geqslant 2,g(C)\geqslant 2$.
	Let $n_0$ be the smallest integer such that $kn_0>g(C)+1$.
	Let $n_1$ be the smallest integer such that $kn_1>g(C)+3$.
	Let 
	$$d\geqslant {\rm max}\{\beta(E,k,g(C)+1)+n_0+1, \beta(E,k,g(C)+3)+n_1\}\,.$$
	Then $\mc Q_L$ is a local complete intersection scheme which
	is also integral, normal and locally factorial.
\end{theorem}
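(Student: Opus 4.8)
The only property in the statement not already contained in Proposition \ref{Q_L lci} is that $\mc Q_L$ is irreducible, and since $\mc Q_L$ is normal (hence reduced) by that proposition, this amounts to showing that $\mc Q_L$ is connected, for every $L\in{\rm Pic}^d(C)$. My plan is to first reduce this to the case of a general $L$. By Theorem \ref{det flat} the morphism ${\rm det}:\mc Q\longrightarrow{\rm Pic}^d(C)$ is proper (both schemes being projective over $\C$) and flat, with $\mc Q$ integral and normal; so in its Stein factorization ${\rm det}=h\circ g$, $g:\mc Q\to Y$, $h:Y\to{\rm Pic}^d(C)$, the scheme $Y$ is normal and connected, hence integral, and $h$ is finite. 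If the geometric generic fibre of ${\rm det}$ is connected then $h$ is a finite birational morphism of integral schemes with ${\rm Pic}^d(C)$ normal, hence an isomorphism; thus ${\rm det}$ has geometrically connected fibres and every $\mc Q_L$ is connected. It therefore suffices to prove that $\mc Q_L$ is irreducible for $L$ in a dense open subset of ${\rm Pic}^d(C)$.

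If $k=r-1$ this is immediate: by Theorem \ref{thm when k=r-1} (whose hypothesis on $d$ is implied by the one here) $\mc Q$ is a projective bundle over ${\rm Pic}^{e-d}(C)$ whose structure morphism is identified with ${\rm det}$ via the isomorphism ${\rm Pic}^{e-d}(C)\xrightarrow{\sim}{\rm Pic}^{d}(C)$, $K\mapsto{\rm det}(E)\otimes K^{-1}$; so each fibre of ${\rm det}$ is a projective space. From now on I assume $k\leqslant r-2$, so that the constructions of \S\ref{section stable quotients} apply (the bound on $d$ being taken large enough for these as well). Let $\mc Q^s\subset\mc Q$ be the open locus of stable quotients, which is nonempty by Remark \ref{dimension quot}(3). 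As shown in the proof of Theorem \ref{det flat}, ${\rm det}$ is smooth on $\mc Q_{\rm g}\supseteq\mc Q^s$, so ${\rm det}|_{\mc Q^s}$ is smooth; in particular its image is a dense open subset of ${\rm Pic}^d(C)$. Since $\mc Q\setminus\mc Q^s$ is a proper closed subset of the irreducible scheme $\mc Q$ (its codimension is even positive, by Lemmas \ref{compact lemma codimension}, \ref{lemma 2 codim}(1) and \ref{pic of Q and Q^s}) and $\mc Q_L$ is equidimensional of dimension $\dim\mc Q-g$ (Proposition \ref{Q_L lci}), generic flatness applied to the restriction of ${\rm det}$ to the components of $\mc Q\setminus\mc Q^s$ shows that, for $L$ outside a proper closed subset, $\dim\bigl((\mc Q\setminus\mc Q^s)\cap\mc Q_L\bigr)<\dim\mc Q_L$. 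For such $L$ the open subscheme $\mc Q^s_L:=\mc Q^s\cap\mc Q_L$ is nonempty and dense in $\mc Q_L$, and it then remains to show that $\mc Q^s_L$ is irreducible.

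For this I would use the Cartesian diagram \eqref{diagram of principal bundles}. Put $\Lambda:=L(knP)$. For a stable quotient $[E\longrightarrow F]$ one has ${\rm det}(F)=L$ if and only if ${\rm det}(F(nP))=\Lambda$, so $\mc Q^s_L=\theta^{-1}\bigl(M^s_{k,d+kn,\Lambda}\bigr)$, where $M^s_{k,d+kn,\Lambda}\subset M^s_{k,d+kn}$ is the moduli space of stable bundles of the given rank and degree with fixed determinant $\Lambda$; this is an irreducible variety by the classical theory of moduli of vector bundles (see \cite{DN}). On $\mb U^s$ the morphism ${\rm det}\circ\Psi$ factors as $\Theta_{\mb U^s}:\mb U^s\to R^s$ followed by the determinant morphism $R^s\to{\rm Pic}^{d+kn}(C)$ and the isomorphism $M\mapsto M(-knP)$; pulling back along \eqref{diagram of principal bundles} therefore gives $\Psi^{-1}(\mc Q^s_L)=\Theta_{\mb U^s}^{-1}(R^s_\Lambda)$ with $R^s_\Lambda:=\psi^{-1}(M^s_{k,d+kn,\Lambda})$. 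Now $R^s_\Lambda$ is a principal ${\rm PGL}(N)$-bundle over the irreducible scheme $M^s_{k,d+kn,\Lambda}$, hence irreducible (${\rm PGL}(N)$ being connected); and $\Theta_{\mb U^s}$ is flat and, by Lemma \ref{Picard group of U^s}, surjective, with fibres over points of $R^s$ nonempty open subsets of projective spaces, hence irreducible. Since a flat (hence open) surjection with irreducible fibres over an irreducible base has irreducible total space, $\Theta_{\mb U^s}^{-1}(R^s_\Lambda)$ is irreducible; therefore so is its image $\mc Q^s_L$ under the surjection $\Psi$.

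Combining the last two paragraphs, the general fibre $\mc Q_L$ is irreducible; by the reduction of the first paragraph every $\mc Q_L$ is then connected, and being normal it is integral. Together with Proposition \ref{Q_L lci} this gives the theorem. The essential content of the argument is the connectedness of $\mc Q_L$, the other assertions being inherited from $\mc Q$ and already recorded in Proposition \ref{Q_L lci}; the device that makes it go through is that connectedness need only be checked for a general $L$ and then propagates to every $L$ because ${\rm det}$ is a flat proper morphism out of a normal variety, so that trying to bound the non-stable locus inside every individual fibre (which the available codimension estimates would not suffice for when $g$ is large) is avoided altogether.
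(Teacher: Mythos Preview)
Your argument is essentially correct and takes a genuinely different route from the paper. The paper proves irreducibility of $\mc Q_L$ directly for \emph{every} $L$: it uses Lemmas \ref{density Q^tf_L} and \ref{density  Q^tf_g,L} to reduce to irreducibility of $\mc Q^{\rm tf}_{{\rm g},L}$, then realizes this as the image under $\Psi$ of $\mb U_{L'}$, which is open in a projective bundle over $R'_{L'}$; the key external input is \cite[Corollary 1.3]{Bhosle} giving irreducibility of $R'_{L'}$. You instead reduce to a \emph{generic} $L$ via Stein factorization, work with the smaller open $\mc Q^s_L$ rather than $\mc Q^{\rm tf}_{{\rm g},L}$, and feed in the classical irreducibility of $M^s_{k,\Lambda}$ from \cite{DN} in place of Bhosle's result. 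Your device of propagating connectedness from the generic fibre is clean and, as you say, sidesteps any need to bound the non-stable locus inside every fibre. The paper's approach, on the other hand, is uniform in $L$, does not need the case split $k=r-1$ versus $k\leqslant r-2$ (the map $\Psi:\mb U\to\mc Q^{\rm tf}_{\rm g}$ it uses carries no constraint $r-k\geqslant 2$), and avoids any appeal to facts about $M^s_{k,\Lambda}$.

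There is one small gap. You invoke Lemma \ref{Picard group of U^s} for the surjectivity of $\Theta_{\mb U^s}$, but that lemma assumes $d\geqslant k\mu_0(E,k)+k$, a bound not implied by the hypothesis of the present theorem (note that it is added separately in Theorem \ref{Picard group of Q_L}). This is easily repaired and in fact simplifies your argument: you do not need surjectivity or the fibrewise statement at all, since $\Theta_{\mb U^s}^{-1}(R^s_\Lambda)$ is open in the projective bundle $\mb P^s\vert_{R^s_\Lambda}$, which is already irreducible once $R^s_\Lambda$ is; nonemptiness follows because $\mc Q^s_L\neq\emptyset$ for generic $L$ (which you established) and $\Psi^{-1}(\mc Q^s_L)=\Theta_{\mb U^s}^{-1}(R^s_\Lambda)$. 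Likewise, your citation of Lemma \ref{pic of Q and Q^s} (whose hypotheses exclude the case $k=2$, $g(C)=2$) is unnecessary: that $\mc Q\setminus\mc Q^s$ is a proper closed subset is immediate from Remark \ref{dimension quot}(3) and the irreducibility of $\mc Q$.
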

\begin{proof}
	The Theorem follows using Proposition \ref{Q_L lci} once we 
	show that $\mc Q_L$ is irreducible. In view of  
	Lemma \ref{density Q^tf_L} and Lemma \ref{density  Q^tf_g,L}, 
	it suffices to show that $\mc Q^{\rm tf}_{{\rm g},L}$ is irreducible.
	
	Recall the notation from \S\ref{section stable quotients},
	in particular, the map $\Psi$ from \eqref{def Psi}. 
	This sits in the following commutative diagram whose maps 
	we describe next.
	\begin{equation}\label{cd Q^tf_g,L-1}
		\begin{tikzcd}[column sep=.7em]
			\mb U \arrow[rr,"\Psi"]\arrow[d]
			&& \mc Q^{\rm tf}_{\rm g}\arrow[d] \\
			R'\arrow[rr]&& {\rm Pic}^{d+kn}(C)
		\end{tikzcd}
	\end{equation}
	The bottom horizontal map sends a closed point $[x:\mc O_C^{\oplus N}\longrightarrow F]\in R'$ to
	${\rm det}(F)$. The right vertical map sends a closed 
	point $[q:E\longrightarrow F]\in \mc Q^{\rm tf}_{\rm g}$ to ${\rm det}(F)\otimes \mc O_C(knP)$.
	Let $L':=L\otimes \mc O_C(knP)$. 
	
	The bottom horizontal map in \eqref{cd Q^tf_g,L-1} is a smooth morphism. 
	This follows using Lemma \ref{description differential} and the reason explained 
	after \eqref{eq-smoothness-det}
	applied to the space $R'$. In particular, the morphism 
	$R'\longrightarrow {\rm Pic}^{d+kn}(C)$ is flat. Thus, $R'_{L'}$ is a smooth
	equidimensional scheme. Using \cite[Corollary 1.3]{Bhosle}  
	we easily see that $R'_{L'}$ is irreducible. 
	Taking the ``fiber'' of \eqref{cd Q^tf_g,L-1} over 
	the point $[L']\in {\rm Pic}^{d+kn}(C)$ we get 
	the following commutative diagram
	\[
	\begin{tikzcd}[column sep=.7em]
		\mb U_{L'} \arrow[rr,"\Psi_{L'}"]\arrow[d,"\Theta_{L'}",swap]
		&& \mc Q^{\rm tf}_{{\rm g},L}\arrow[d] \\
		R'_{L'}\arrow[rr]&& {[L']}
	\end{tikzcd}
	\]
	It follows that $\mb U_{L'}$ is irreducible. 
	By surjectivity of $\Psi$ on closed points 
	we get that $\Psi_{L'}$ is also surjective on closed points. 
	It follows that $\mc Q^{\rm tf}_{{\rm g},L}$
	is irreducible. This completes the proof of the Theorem. 
\end{proof}

Let $M^s_{k,L}$ denote the moduli space of stable 
bundles of rank $k$ and determinant $L$. 

\begin{theorem}\label{Picard group of Q_L}
	Let $r-k\geqslant2$. Assume one of the following two holds
	\begin{itemize}
		\item $k\geqslant 2$ and $g(C)\geqslant 3$, or
		\item $k\geqslant 3$ and $g(C)=2$.
	\end{itemize}
	Let $n_0$ be the smallest integer such that $kn_0>g(C)+1$.
	Let $n_1$ be the smallest integer such that $kn_1>g(C)+3$.
	Let 
	$$d\geqslant {\rm max}\{k\mu_0(E,k)+k,\beta(E,k,g(C)+1)+n_0+1, \beta(E,k,g(C)+3)+n_1\}\,.$$
	We have isomorphisms 
	$${\rm Pic}(\mc Q_L)\cong {\rm Pic}(M^s_{k,L})\times \Z \cong \Z \times \Z\,.$$
\end{theorem}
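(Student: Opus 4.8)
The plan is to mirror, with determinant fixed to $L$, the proof of Theorem~\ref{picard group of Quot}. By Theorem~\ref{Q_L is locally factorial} the scheme $\mc Q_L$ is integral, normal and locally factorial, so ${\rm Pic}(\mc Q_L)\cong CH^1(\mc Q_L)$ and it suffices to compute the divisor class group. First I would reduce to the locus of stable quotients $\mc Q^{\rm s}_L:=\mc Q^{\rm s}\cap\mc Q_L$. The isomorphism $CH^1(\mc Q_L)\cong CH^1(\mc Q^{\rm tf}_{{\rm g},L})$ is already available from the results on fibres of ${\rm det}$: by \eqref{codim of Q^{tf}} the closed subset $\bar Z_{1,L}=\mc Q_L\setminus\mc Q^{\rm tf}_L$ has codimension $k\geqslant 2$ in $\mc Q_L$, and by \eqref{codimenion Q_{L,b}} the closed subset $\mc Q_L\cap\mc Q_{\rm b}$ has codimension $>3$; since $\mc Q_L$ is equidimensional (Proposition~\ref{Q_L lci}) and $\mc Q^{\rm tf}_{{\rm g},L}=\mc Q^{\rm tf}_L\setminus(\mc Q^{\rm tf}_L\cap\mc Q_{\rm b})$, both of these subsets of codimension $\geqslant 2$ may be discarded without changing the divisor class group. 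It remains to prove $CH^1(\mc Q^{\rm tf}_{{\rm g},L})\cong CH^1(\mc Q^{\rm s}_L)$, and for this I would run the argument of Lemma~\ref{pic of Q and Q^s} fibrewise. Set $L':=L\otimes\mc O_C(knP)$ and recall from \eqref{cd Q^tf_g,L-1} the maps $\Psi_{L'}:\mb U_{L'}\longrightarrow\mc Q^{\rm tf}_{{\rm g},L}$ and $\Theta_{L'}:\mb U_{L'}\longrightarrow R'_{L'}$; here $\Psi_{L'}$ is surjective on closed points, $\Theta_{L'}$ is flat, and $\mb U_{L'}$ is integral (since $R'_{L'}$ is, by \cite[Corollary~1.3]{Bhosle}, and $\mb U_{L'}$ is an open subset of a projective bundle over $R'_{L'}$). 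Granting the fixed-determinant analogue of \cite[Proposition~1.2]{Bhosle}, namely $\dim(R'_{L'})-\dim(R'_{L'}\setminus R^{\rm s}_{L'})\geqslant 2$, an application of Lemma~\ref{fiber-dimension-1} to $\Psi_{L'}$, exactly as in Lemma~\ref{pic of Q and Q^s}, gives that $\mc Q^{\rm tf}_{{\rm g},L}\setminus\mc Q^{\rm s}_L$ has codimension $\geqslant 2$. Hence $CH^1(\mc Q_L)\cong CH^1(\mc Q^{\rm s}_L)$.

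Next I would base change the Cartesian diagram \eqref{diagram of principal bundles} over the point $[L']\in{\rm Pic}^{d+kn}(C)$. Each of its four vertices maps to ${\rm Pic}^{d+kn}(C)$ by the determinant (twisted by $\mc O_C(nP)$ or $\mc O_C(knP)$ as appropriate), compatibly with the maps in \eqref{diagram of principal bundles}, and all four of these maps are flat: $\mc Q^{\rm s}\to{\rm Pic}^{d+kn}(C)$ is the restriction to an open subscheme of the flat morphism ${\rm det}|_{\mc Q_{\rm g}}$ from the proof of Theorem~\ref{det flat}; $R'\to{\rm Pic}^{d+kn}(C)$ is smooth, as noted in the proof of Theorem~\ref{Q_L is locally factorial}; $\mb U^{\rm s}\to{\rm Pic}^{d+kn}(C)$ is then flat as well; and $M^s_{k,d+kn}\to{\rm Pic}^{d+kn}(C)$ is flat by miracle flatness, source and target being smooth with equidimensional fibres. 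Consequently the base change of \eqref{diagram of principal bundles} over $[L']$ is again Cartesian,
\[
\xymatrix{
	\mb U^{\rm s}_{L'}\ar[rr]^{\Psi_{L'}}\ar[d]_{\Theta_{L'}} && \mc Q^{\rm s}_L\ar[d]^{\theta_{L'}}\\
	R^{\rm s}_{L'}\ar[rr]^{\psi_{L'}} && M^s_{k,L}
}
\]
where we identify $M^s_{k,L'}\cong M^s_{k,L}$ by twisting with $\mc O_C(-nP)$; here $\psi_{L'}$ (and hence also $\Psi_{L'}$) is a principal ${\rm PGL}(N)$-bundle, being a base change of $\psi$. Moreover $R^{\rm s}_{L'}$ is integral, $\mb P^{\rm s}_{L'}\to R^{\rm s}_{L'}$ is a projective bundle, and $\mb U^{\rm s}_{L'}\subset\mb P^{\rm s}_{L'}$ has complement of codimension $\geqslant 2$: this is immediate because the codimension estimate proved inside Lemma~\ref{Picard group of U^s} was established on every fibre of $\Theta:\mb P^{\rm s}\to R^{\rm s}$, in particular over the points of $R^{\rm s}_{L'}$.

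With this in place I would repeat, essentially verbatim, the descent argument of Theorem~\ref{picard group of Quot}. The fibre $\theta_{L'}^{-1}([F])$ is isomorphic to a fibre of $\Theta_{L'}$, hence is a very large open subset of a projective space, so ${\rm Pic}(\theta_{L'}^{-1}([F]))\cong\Z$ and the restriction map ${\rm res}:{\rm Pic}(\mc Q^{\rm s}_L)\cong{\rm Pic}(\mc Q_L)\longrightarrow{\rm Pic}(\theta_{L'}^{-1}([F]))\cong\Z$ is surjective, since a very ample line bundle on the projective scheme $\mc Q_L$ cannot restrict trivially to that fibre. A line bundle on $\mc Q^{\rm s}_L$ with trivial ${\rm res}$ pulls back along $\Psi_{L'}$, extends uniquely together with its ${\rm PGL}(N)$-linearization to $\mb P^{\rm s}_{L'}$ (complement of codimension $\geqslant 2$), equals the pullback along $\Theta_{L'}$ of a line bundle on $R^{\rm s}_{L'}$ (the $\mc O(1)$-component vanishing because the bundle is trivial on the fibres of $\Theta_{L'}$, which coincide with those of $\theta_{L'}$), descends along the principal bundle $\psi_{L'}$ to $M^s_{k,L}$ by \cite[Theorem~4.2.14]{HL}, and is therefore pulled back along $\theta_{L'}$ from $M^s_{k,L}$ by \cite[Theorem~4.2.16]{HL}, using that $\Psi_{L'}$ is a principal ${\rm PGL}(N)$-bundle. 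This yields ${\rm Pic}(\mc Q_L)\cong{\rm Pic}(M^s_{k,L})\times\Z$. Finally ${\rm Pic}(M^s_{k,L})\cong\Z$ by \cite{DN}, using that $M_{k,L}\setminus M^s_{k,L}$ has codimension $\geqslant 2$, which follows from the fixed-determinant version of \cite[Proposition~1.2(3)]{Bhosle} together with Lemma~\ref{fiber-dimension-1} applied to the GIT quotient $R^{ss}_{L'}\to M_{k,L}$. This gives the stated isomorphisms ${\rm Pic}(\mc Q_L)\cong{\rm Pic}(M^s_{k,L})\times\Z\cong\Z\times\Z$. The main obstacle is bookkeeping rather than a new idea: one must verify that every morphism to ${\rm Pic}^{d+kn}(C)$ in sight is flat, so that base change to the fibre over $[L']$ preserves both the Cartesian square and the two principal ${\rm PGL}(N)$-bundle structures, and one must extract from \cite{Bhosle} the fixed-determinant analogues of the codimension-two estimates used in the non-fixed case; these should be packaged in \cite[Corollary~1.3]{Bhosle}.
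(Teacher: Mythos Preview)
Your proposal is correct and follows essentially the same route as the paper: reduce to $\mc Q^{\rm s}_L$ via the codimension estimates \eqref{codim of Q^{tf}}, \eqref{codimenion Q_{L,b}}, and \cite[Corollary~1.3]{Bhosle}, then base change the Cartesian diagram \eqref{diagram of principal bundles} to the fibre over $[L']$ and repeat the descent argument of Theorem~\ref{picard group of Quot}, concluding with \cite[Theorem~B]{DN}. Two minor remarks: your flatness verifications are unnecessary, since base change of a Cartesian square is always Cartesian and base change of a principal ${\rm PGL}(N)$-bundle is always a principal ${\rm PGL}(N)$-bundle; and your argument shows only that ${\rm res}$ is \emph{nontrivial} (hence has image isomorphic to $\Z$), not that it is surjective---this is also what the paper proves and is all that is needed.
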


\begin{proof}
	The proof is similar to Theorem \ref{picard group of Quot} and so we only sketch it. 
	From (\ref{codimenion Q_{L,b}}) and the fact that 
	$\mc Q^{\rm tf}_L \setminus \mc Q^{\rm tf}_{{\rm g},L}=\mc Q^{\rm tf}_{L}\cap \mc Q_{\rm b}$ 
	it follows that 
	$${\rm dim}(\mc Q_L) - {\rm dim}(\mc Q_L\setminus \mc Q^{\rm tf}_{{\rm g},L})\geqslant 2\,.$$
	Now consider the diagram
	\[
	\begin{tikzcd}[column sep=.7em]
		\mb U_{L'} \arrow[rr,"\Psi_{L'}"]\arrow[d,"\Theta_{L'}",swap]
		&& \mc Q^{\rm tf}_{{\rm g},L}\arrow[d] \\
		R'_{L'}\arrow[rr]&& {[L']}
	\end{tikzcd}
	\]
	Just as in Lemma \ref{pic of Q and Q^s}, using \cite[Corollary 1.3]{Bhosle},
	and Lemma \ref{fiber-dimension-1} we have 
	$${\rm dim}(\mc Q^{\rm tf}_{{\rm g},L}) - 
		{\rm dim}(\mc Q^{\rm tf}_{{\rm g},L}\setminus \mc Q^s_L)\geqslant 2\,.$$
	Therefore we get that 
	$${\rm dim}(\mc Q_L) - {\rm dim}(\mc Q_L\setminus \mc Q^s_L)\geqslant 2\,.$$
	Since $\mc Q_L$ is locally factorial we have
		$${\rm Pic}(\mc Q_L)\cong {\rm Pic}(\mc Q^s_L)\,.$$
		Now we have the cartesian diagram
		\begin{equation}
	\xymatrix{
		\mb U^s_L\ar[r]^\Psi\ar[d]_{\Theta_L} & \mc Q^s_L\ar[d]^{\theta_L}\\
		R^s_L\ar[r]^{\psi}& M^s_{k,L}\,.
	}
\end{equation}
   	which we get by taking the fiber over $[L]$ of the diagram 
   	(\ref{diagram of principal bundles}). The rest of 
   	the proof is the same as the proof of 
   	Theorem \ref{picard group of Quot}, by considering 
   	this diagram instead of (\ref{diagram of principal bundles}). 
   	The second equality follows from \cite[Theorem B]{DN}.
\end{proof}

\section{Quot Schemes ${\rm Quot}_{C/\C}(E,1,d)$}

In this section we consider the case $k=1$. We only sketch the proofs
as they are similar to the earlier cases considered. 

\begin{theorem}\label{k=1}
	Let $k=1$. Let $d\geqslant {\rm max}\{\mu_0(E,1)+1,\beta(E,1,g(C)+3)+g(C)+4\}$. Then 
	$${\rm Pic}(\mc Q)\cong {\rm Pic}({\rm Pic}^d(C))\times \Z\times \Z\,,\quad \quad 
		{\rm Pic}(\mc Q_L)\cong \Z\times \Z\,.$$
\end{theorem}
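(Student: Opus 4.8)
The plan is to follow the pattern of Section~\ref{section stable quotients} and the section on fibres of ${\rm det}$, simplified by the observation that for $k=1$ a torsion--free rank one quotient is just a line bundle, so the auxiliary moduli space of stable bundles is replaced by ${\rm Pic}^d(C)$ itself and much of the apparatus collapses. We may assume $r\geqslant 3$, since for $r=2$ one has $k=r-1$ and the statement, with only one factor $\Z$, is Corollary~\ref{cor when k=r-1}. First I would check that the structural results already proved go through: the proofs of Theorem~\ref{det flat} and Proposition~\ref{Q_L lci} never use $k\geqslant 2$, and neither do those of Lemma~\ref{density Q^tf_L} and Theorem~\ref{Q_L is locally factorial} apart from the irreducibility of $\mc Q^{\rm tf}_{{\rm g},L}$, which for $k=1$ is automatic because that locus coincides with $\mc Q^{\rm tf}_L$, an open subscheme of a projective space (see the next step). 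For $k=1$ the integers $n_0,n_1$ appearing in those statements are $g(C)+2$ and $g(C)+4$, so all the hypotheses needed follow from $d\geqslant\beta(E,1,g(C)+3)+g(C)+4$ and the monotonicity of $\beta$ in its last argument. Hence $\mc Q$ and $\mc Q_L$ are integral, normal, local complete intersection schemes that are locally factorial, ${\rm det}$ is flat, and ${\rm Pic}$ may be computed as $CH^1$ for both.

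Next I would identify the torsion--free locus. A torsion--free rank one quotient of $E$ of degree $d$ is a line bundle $L$ with $\mu_{\rm min}(L)=d\geqslant\mu_0(E,1)$, so Lemma~\ref{lemma to define good locus} gives $\mc Q^{\rm tf}=\mc Q^{\rm tf}_{\rm g}$, $\mc Q^{\rm tf}_L=\mc Q^{\rm tf}_{{\rm g},L}$, and $H^1(C,E^\vee\otimes L)=0$ for every $[L]\in{\rm Pic}^d(C)$. With $\mc P$ a Poincar\'e bundle on $C\times{\rm Pic}^d(C)$ and $\rho_1,\rho_2$ the projections, Grauert's theorem makes $\mc E':=\rho_{2*}(\rho_1^*E^\vee\otimes\mc P)$ locally free, of rank $rd-e-r(g-1)$, with fibre ${\rm Hom}(E,L)$ at $[L]$. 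By an argument parallel to that of Theorem~\ref{thm when k=r-1} (carried out for the quotient instead of the kernel), the universal homomorphism on $C\times\mb P((\mc E')^{\vee})$ is surjective over an open subset $\mc U$, its cokernel there is flat of fibrewise rank $1$ and degree $d$, and the resulting morphism $\mc U\longrightarrow\mc Q^{\rm tf}$ is bijective on closed points; since $\mc Q^{\rm tf}$ is normal, Zariski's main theorem makes it an isomorphism. By Remark~\ref{remark k=1}, the complement of $\mc U$ in each fibre $\mb P({\rm Hom}(E,L)^\vee)$ of $\mb P((\mc E')^{\vee})\to{\rm Pic}^d(C)$ has codimension $\geqslant 2$, hence so does $\mb P((\mc E')^{\vee})\setminus\mc U$, and therefore ${\rm Pic}(\mc Q^{\rm tf})\cong{\rm Pic}(\mb P((\mc E')^{\vee}))\cong{\rm Pic}({\rm Pic}^d(C))\times\Z$, the $\Z$ generated modulo ${\rm Pic}({\rm Pic}^d(C))$ by $\mc O(1)$. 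Running the same discussion over a single point $[L]$ gives $\mc Q^{\rm tf}_L\cong\mc U_L$, an open subset of $\mb P({\rm Hom}(E,L)^\vee)$ with complement of codimension $\geqslant 2$, so ${\rm Pic}(\mc Q^{\rm tf}_L)\cong\Z$.

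The feature that distinguishes $k=1$ is that $\mc Q\setminus\mc Q^{\rm tf}$ has codimension exactly $1$. By Lemma~\ref{lemma 2 codim}(1) the scheme $Z_1$ is irreducible, $\bar Z_1\supseteq\bigcup_{j\geqslant 1}Z_j=\mc Q\setminus\mc Q^{\rm tf}$ and $\dim\mc Q-\dim\bar Z_1=k=1$, so $\mc Q\setminus\mc Q^{\rm tf}=\bar Z_1$ is an irreducible divisor; likewise $\mc Q_L\setminus\mc Q^{\rm tf}_L=\bar Z_{1,L}$ is an irreducible divisor in $\mc Q_L$, irreducible because by the proof of Lemma~\ref{density Q^tf_L} it is $\pi'(B_L)$ with $B_L$ irreducible. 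Excision for divisor class groups then yields right-exact sequences
$$\Z\cdot[\bar Z_1]\longrightarrow CH^1(\mc Q)\longrightarrow CH^1(\mc Q^{\rm tf})\longrightarrow 0,\qquad \Z\cdot[\bar Z_{1,L}]\longrightarrow CH^1(\mc Q_L)\longrightarrow CH^1(\mc Q^{\rm tf}_L)\longrightarrow 0.$$
The step I expect to be the crux is the injectivity of the first arrow in each, i.e.\ that $[\bar Z_1]$ and $[\bar Z_{1,L}]$ have infinite order; fortunately this is immediate from projectivity. Indeed $\mc Q$ is projective and integral, so $H^0(\mc Q,\mc O_{\mc Q})=\C$; a relation $n[\bar Z_1]=0$ with $n\geqslant 1$ would make the nonzero effective divisor $n\bar Z_1$ principal, hence the divisor of a global regular function, that is, of a nonzero constant, which has empty zero locus --- absurd. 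The same applies to $\bar Z_{1,L}\subset\mc Q_L$, which is likewise projective, integral and nonempty.

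It remains to assemble the pieces. For $\mc Q_L$ the quotient $CH^1(\mc Q^{\rm tf}_L)\cong\Z$ is free, so the sequence splits and ${\rm Pic}(\mc Q_L)\cong\Z\times\Z$. For $\mc Q$, under the isomorphism $\mc Q^{\rm tf}\cong\mc U$ the restriction of ${\rm det}$ becomes the structure map $\mc U\to{\rm Pic}^d(C)$; hence ${\rm det}^*\colon{\rm Pic}({\rm Pic}^d(C))\to{\rm Pic}(\mc Q)$ is injective and, after restriction to $\mc Q^{\rm tf}$, is the standard split inclusion $\pi^*$ into ${\rm Pic}(\mb P((\mc E')^{\vee}))$. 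Choosing $L_1\in{\rm Pic}(\mc Q)$ restricting to $\mc O(1)$ and combining with the exact sequence above gives
$${\rm Pic}(\mc Q)\cong{\rm det}^*{\rm Pic}({\rm Pic}^d(C))\oplus\Z L_1\oplus\Z[\bar Z_1]\cong{\rm Pic}({\rm Pic}^d(C))\times\Z\times\Z.$$
In short, the only genuinely new point relative to the $k\geqslant 2$ cases is the codimension one boundary divisor $\bar Z_1$: its irreducibility comes from Lemma~\ref{lemma 2 codim} and the infinite order of its class from projectivity of $\mc Q$, and once these are in hand the computation proceeds exactly as before.
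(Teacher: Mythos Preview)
Your approach is essentially identical to the paper's: both identify $\mc Q^{\rm tf}$ with a large open subset $\mb W$ of a projective bundle over ${\rm Pic}^d(C)$ (via the construction parallel to Theorem~\ref{thm when k=r-1}), read off ${\rm Pic}(\mc Q^{\rm tf})$ from Remark~\ref{remark k=1}, and then handle the codimension-one boundary $\bar Z_1$ (resp.\ $\bar Z_{1,L}$) by excision, with the paper merely writing the resulting sequence as $0\to\Z\to\cdots$ where you spell out the projectivity argument for injectivity; the paper also proves $\mc Q^{\rm tf}$ smooth directly rather than invoking normality and Zariski's main theorem, but either route works. One small caveat: the proof of Lemma~\ref{density Q^tf_L} shows that $B$ is integral and ${\rm det}_B$ is flat, but not that the fibre $B_L$ is irreducible, so your justification of the irreducibility of $\bar Z_{1,L}$ needs a line or two more (e.g.\ use that $B=\mb P(\ms S)\to C\times A$ is a projective bundle and that $C\times A^{\rm tf}\to{\rm Pic}^d(C)$ is smooth with connected fibres); the paper likewise asserts this irreducibility without proof.
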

\begin{proof}
	We can apply Theorem \ref{det flat} to conclude that $\mc Q$ is 
	integral, normal and locally factorial.
	We claim that $\mc Q^{\rm tf}$ is smooth. To see this, let 
	$$0\longrightarrow S\longrightarrow E\longrightarrow L\longrightarrow 0$$
	be a quotient. Applying ${\rm Hom}(-,L)$ we get a surjection 
	${\rm Ext}^1(E,L)\longrightarrow {\rm Ext}^1(S,L)\longrightarrow 0$.
	By Lemma \ref{lemma to define good locus} it follows that ${\rm Ext}^1(E,L)=0$.
	It easily follows that $\mc Q^{\rm tf}$ is smooth.
	
	Let 
	\begin{equation}
		\rho_1:C\times {\rm Pic}^{d}(C)\longrightarrow C\,,\quad \quad 
		\rho_2:C\times {\rm Pic}^{d}(C)\longrightarrow {\rm Pic}^{d}(C)
	\end{equation}
	be the projections.
	Let $\mc L$ be a Poincare bundle on $C\times {\rm Pic}^{d}(C)$. Define 
	$$\mc E:=\rho_{2*}[\rho_{1}^*E^\vee\otimes \mc L]\,.$$ 
	Using Lemma \ref{lemma to define good locus} and cohomology and base change 
	we easily conclude that 
	$\mc E$ is a locally free sheaf on ${\rm Pic}^{d}(C)$ such that 
	the fibre over the point $[L]\in {\rm Pic}^{d}(C)$ is isomorphic 
	to ${\rm Hom}(E,L)$. Let $\mb W\subset \mb P(\mc E^\vee)$ be the open subset 
	consisting of points parametrizing surjective maps. Both $\mb W$ and 
	$\mc Q^{\rm tf}$ are smooth. There is a map
	$\mb W\longrightarrow {\mc Q}^{\rm tf}$ which is bijective on points
	(and hence an isomorphism as both are smooth)
	and sits in a commutative diagram
	\[
	\xymatrix{
		\mb W\ar[r]^\sim\ar[dr]& \mc Q^{\rm tf}\ar[d]^{\rm det}\\
		& {\rm Pic}^{d}(C)
	}
	\]
	Using Remark \ref{remark k=1} it follows that 
	${\rm dim}(\mb P(\mc E^\vee))-{\rm dim}(\mb P(\mc E^\vee)\setminus \mb W)\geqslant 2$.
	Thus, it follows that ${\rm Pic}(\mc Q^{\rm tf})\cong {\rm Pic}(\mb W)\cong 
	{\rm Pic}(\mb P(\mc E^\vee))\cong {\rm Pic}({\rm Pic}^d(C))\times \Z$.
	By Lemma \ref{lemma 2 codim}, $\mc Q\setminus\mc Q^{\rm tf}=\bar Z_1$  
	is irreducible of codimension 1 and so 
	we have an exact sequence 
	$$0\longrightarrow \mb Z\longrightarrow {\rm Pic}(\mc Q)\longrightarrow
		 {\rm Pic}(\mc Q^{\rm tf})\longrightarrow 0\,.$$
	It easily follows that we have an isomorphism 
	$${\rm Pic}(\mc Q)\cong {\rm Pic}({\rm Pic}^d(C))\times \Z\times \Z\,.$$

	For $\mc Q_L$, we first 
	show that $\mc Q_L$ is integral, normal and locally factorial.
	This is easily done using Proposition \ref{Q_L lci},
	Lemma \ref{density Q^tf_L} and using the fact
	that $\mc Q^{\rm tf}_L\cong \mb W_L$. The rest of 
	the proof follows in the same way as that of $\mc Q$,
	once we use the irreducibility of $\bar Z_{1,L}$
	and the fact that it is of codimension 1, see 
	\eqref{codim of Q^{tf}}. We remark that when $k=1$,
	unlike in Theorem \ref{Q_L is locally factorial}, we do not 
	need to use \cite{Bhosle} and hence do not need the 
	hypothesis that $g(C)\geqslant 2$. 
\end{proof}


\end{document}